\documentclass[10pt,letterpaper,twoside]{article}

\usepackage[letterpaper,left=1in,right=1in,top=1.5in,bottom=1.5in]{geometry}

\usepackage[T1]{fontenc}
\usepackage{amsmath,amssymb,amsthm,mathtools}
\usepackage{eucal,mathrsfs}
\usepackage[bbgreekl]{mathbbol}
\usepackage{microtype}
\usepackage{verbatim}
\usepackage{tikz-cd}
\usepackage{enumitem}
\usepackage{xcolor}
\usepackage{fancyhdr}

\setlist{noitemsep}

\numberwithin{equation}{section}

\providecommand{\MR}[1]{}

\definecolor{todo}{rgb}{1,0,0}
\definecolor{conditional}{rgb}{0,1,0}
\definecolor{e-mail}{rgb}{0,.40,.80}
\definecolor{reference}{rgb}{.20,.60,.22}
\definecolor{mrnumber}{rgb}{.80,.40,0}
\definecolor{citation}{rgb}{0,.40,.80}

\let\oldmarginpar\marginpar
\renewcommand\marginpar[1]{\-\oldmarginpar[\raggedleft\footnotesize #1]%
  {\raggedright\footnotesize #1}}

\newcommand{\stack}[1]{\mathcal{#1}}
\newcommand{\sM}{\stack{M}}
\newcommand{\sE}{\stack{E}}
\newcommand{\sC}{\stack{C}}
\newcommand{\sF}{\stack{F}}
\newcommand{\sR}{\stack{R}}

\newcommand{\sheaf}[1]{\mathscr{#1}}
\newcommand{\OO}{\sheaf{O}}
\newcommand{\LL}{\sheaf{L}}
\newcommand{\VV}{\sheaf{V}}
\newcommand{\NN}{\sheaf{N}}

\newcommand{\bA}{\mathbf{A}}

\newcommand{\bG}{\mathbf{G}}
\newcommand{\bP}{\mathbf{P}}
\newcommand{\bQ}{\mathbf{Q}}
\newcommand{\bZ}{\mathbf{Z}}

\newcommand{\SB}{\mathbf{SB}}
\newcommand{\inv}{\mathrm{inv}}
\newcommand{\nr}{\mathrm{nr}}
\newcommand{\Aut}{\mathrm{Aut}}
\newcommand{\Pic}{\mathrm{Pic}}
\newcommand{\Gal}{\mathrm{Gal}}
\newcommand{\Br}{\mathrm{Br}}
\newcommand{\Hilb}{\mathrm{Hilb}}

\newcommand{\ind}{\mathrm{ind}}
\renewcommand{\deg}{\mathrm{deg}}

\newcommand{\red}{\mathrm{red}}

\newcommand{\Gm}{\bG_{m}}
\newcommand{\GL}{\mathbf{GL}}
\newcommand{\PGL}{\mathbf{PGL}}
\newcommand{\SL}{\mathbf{SL}}

\DeclareMathOperator{\Spec}{Spec}

\newcommand{\cdual}[1]{\widehat{#1}}

\newcommand{\jj}{\boldsymbol{j}}

\providecommand{\xlongrightarrow}[1]{\xrightarrow{#1}}
\renewcommand{\geq}{\geqslant}
\renewcommand{\leq}{\leqslant}

\providecommand{\Bar}[1]{\overline{#1}}
\newcommand{\kbar}{\Bar{k}}

\theoremstyle{plain}
\newtheorem{theorem}{Theorem}[section]
\newtheorem*{theorem*}{Theorem}
\newtheorem{lemma}[theorem]{Lemma}

\newtheorem{proposition}[theorem]{Proposition}

\newtheorem{corollary}[theorem]{Corollary}
\newtheorem*{corollary*}{Corollary}

\theoremstyle{definition}
\newtheorem{definition}[theorem]{Definition}

\newtheorem*{question*}{Question}

\newtheorem{remark}[theorem]{Remark}
\newtheorem{claim}[theorem]{Claim}

\title{Splitting Brauer classes by genus $1$ curves over number fields}
\author{Benjamin Antieau, Asher Auel, and Federico Scavia}
\date{}

\usepackage[
  pdfstartview=FitH,
  pdfauthor={Benjamin Antieau, Asher Auel, and Federico Scavia},
  pdftitle={Splitting Brauer classes by genus 1 curves over number fields},
  colorlinks,
  linkcolor=reference,
  citecolor=citation,
  urlcolor=e-mail,
  backref=page
]{hyperref}

\usepackage{cleveref}

\begin{document}

\maketitle
		
\begin{abstract}
\noindent We prove that every Severi--Brauer variety of dimension at least $2$ over a number field contains a twisted elliptic normal curve. Consequently, every Brauer class over a number field is split by a genus $1$ curve. We prove this using the fibration method, by showing that the rational points on a smooth compactification of the Hilbert scheme of twisted elliptic normal curves are dense in its Brauer--Manin set.
\end{abstract}

\vspace*{4mm}

\section{Introduction}

\paragraph{Splitting fields of Brauer classes.} One of the basic ways of studying Brauer classes is through their splitting fields, that is, field extensions over which they become trivial. Classically, one first considers finite splitting fields. For central simple algebras, this perspective leads to the study of maximal subfields and crossed products, to the index of a Brauer class, and to long-standing problems such as the period-index problem and Albert’s cyclicity problem. Splitting fields of geometric origin, namely function fields of smooth projective geometrically integral varieties, have also played a central role. They appear already in the use of Severi--Brauer varieties in the construction of generic splitting fields by Amitsur~\cite{amitsur1955generic}, the proofs of Merkurjev's theorem \cite{merkurjev1981norm} and the Merkurjev--Suslin theorem \cite{merkurjev1982k-cohomology}, and later for Galois cohomology in arbitrary degrees in the use of Pfister quadrics and Rost varieties in the proof by Voevodsky and Rost of the norm-residue isomorphism theorem (the Milnor conjecture and the Bloch--Kato conjecture), see \cite{haesemeyer2019norm}.
    
It is thus natural to ask about the arithmetic and geometry of $k$-varieties whose function fields split a given Brauer class $\alpha \in \Br(k)$. Recall that if $X$ is a Severi--Brauer variety representing $\alpha \in \Br(k)$, then $\alpha$ is split by the function field $k(X)$; more precisely, Amitsur proved that $X$ is universal with property in that for every field extension $L/k$, we have $\alpha_L=0$ in $\Br(L)$ if and only if $X$ has an $L$-point. In particular, $\alpha$ is split by the function field $k(Y)$ of an integral $k$-variety $Y$ if and only if there exists a rational map $Y \dashrightarrow X$.  One consequence is that $\alpha$ can be split by smooth projective geometrically integral varieties of dimension one; indeed, by Bertini's theorem, every Severi--Brauer variety contains a smooth projective geometrically integral curve $C$, as a complete intersection of anticanonical divisors, and hence in particular the Brauer class of $X$ is split by $k(C)$. This construction, however, typically produces curves of large genus. This is expected, because for every fixed genus $g\neq 1$ there is a basic obstruction to splitting Brauer classes by curves of genus~$g$. Indeed, if a smooth projective geometrically integral curve $C$ of genus $g\geq 0$ splits $\alpha$, then
\[
        \ind(\alpha)\mid \deg(\omega_C)=2g-2 .
\]
Consequently, no fixed genus $g\neq 1$ can split Brauer classes of arbitrary index. The exceptional case is $g=1$, where $2g-2=0$ and the obstruction disappears. Genus $1$ is therefore the only genus in which one can possibly hope for a splitting result for all Brauer classes.

\paragraph{The question of Clark and Saltman.} 

By definition, a \emph{genus $1$ curve} is a smooth projective geometrically connected curve of geometric genus $1$ over a field. The following question, due to Clark~\cite{clark:open} and Saltman~\cite{rage:problems}, arises naturally: for a field $k$ and a Brauer class $\alpha\in \Br(k)$, does there exist a genus $1$ curve $C$ over $k$ such that $\alpha_{k(C)}=0$ in $\Br(k(C))$? Equivalently, if $X$ is a Severi--Brauer variety representing $\alpha$, does there exist a morphism $C\to X$ from a genus $1$ curve $C$ over $k$?

This question sits at the intersection of several lines of research in the arithmetic of Brauer groups and genus $1$ curves, and has been studied through a variety of techniques: geometric constructions, duality pairings in Galois cohomology, Weil--Ch\^atelet groups, moduli stacks, gerbes, and twisted sheaves, among others.  Geometric constructions of genus 1 curves mapping to Severi--Brauer varieties of small dimension originate in the work of Artin~\cite{artin:Brauer-Severi} on the geometry of Severi--Brauer varieties, and made more explicit by work of Swets~\cite{swets1995global} and de Jong--Ho~\cite{dejong-ho}.  From the point of view of the period-index problem (for Brauer classes and for genus $1$ curves), the obstruction to realizing divisor classes on genus $1$ curves is governed by the period-index obstruction map of O'Neil \cite{oneil2001jacobians} and Clark \cite{clark2005period}, which the first two authors of the present article leveraged to show that, if $E$ is an elliptic curve over $k$ with full level $n$ structure, then every cyclic class in $\Br(k)[n]$ is split by an $E$-torsor (that is, a genus $1$ curve with Jacobian $E$); see~\cite[Theorem~C]{antieau-auel}. In a different direction, Ho and Lieblich~\cite{ho-lieblich} proved that every Brauer class is split by a torsor under an abelian variety; see also \cite[Theorem~B]{antieau-auel} for refinements. For a fixed genus $1$ curve $C$ over $k$, the relative Brauer group $\ker(\Br(k)\to \Br(C))$ has been studied by Roquette \cite{roquette1966splitting}, Lichtenbaum \cite{lichtenbaum1968period}, Ciperiani--Krashen \cite{ciperiani-krashen}, and Krashen--Lieblich \cite{krashen2008index}, among others. These works connect the Clark--Saltman question to the elementary obstruction to rational points, index-reduction formulas, and moduli of twisted sheaves. Finally, the finer problem of which Brauer classes are split by torsors under a prescribed elliptic curve has led to Saltman's obstructions \cite{saltman-genus} and the subsequent work of Mackall--Rekuski~\cite{mackall-rekuski}. 

Over an arbitrary field $k$ and for an arbitrary Brauer class $\alpha\in \Br(k)$, the Clark--Saltman question has a positive answer in the case $\mathrm{ind}(\alpha)\leq 3$ by Swets~\cite{swets1995global}, $\mathrm{ind}(\alpha)=4,5$ by de Jong--Ho~\cite{dejong-ho}, and by unpublished work of the second author if $\mathrm{ind}(\alpha)=6$.  In contrast, Reichstein and the third author \cite{reichstein2025brauer} constructed examples of Brauer classes of high index not split by any genus $1$ curve. More precisely, they proved that for every prime $p$, every integer $r$ with $r\geq 6$ if $p$ is odd and $r\geq 7$ if $p=2$, and every field $k_0$ containing a primitive $p$th root of unity, the Brauer class $\alpha=(t_1,t_2)_p+\cdots+(t_{2r-1},t_{2r})_p$ over $k=k_0(\!(t_1)\!)\cdots(\!(t_{2r})\!)$, corresponding to the central simple algebra given by the tensor product of cyclic algebras, is not split by any genus $1$ curve. More generally, their examples are not split by torsors under abelian varieties of dimension $g$ sufficiently small compared to $r$.

\paragraph{The Clark--Saltman question for cyclic Brauer classes.} The Brauer classes considered in \cite{reichstein2025brauer} have period $p$ and index $p^r$, where $r\geq 6$, and hence in particular are not cyclic. In fact, although the question of Clark--Saltman has a negative answer in general, there is evidence for a positive answer for \emph{cyclic} Brauer classes, and in particular over \emph{global fields}. (Recall that a Brauer class $\alpha\in \Br(k)[n]$ is cyclic if there exist $\chi\in H^1(k,\bZ/n\bZ)$ and $b\in H^1(k,\mu_n)$ such that $\alpha=\chi\cup b$ in $H^2(k,\mu_n)=\Br(k)[n]$; every Brauer class over a local or global field is cyclic.) Indeed, using work of Roquette~\cite{roquette1966splitting}, the first and second author showed that every Brauer class over a local field $k$ is split by a genus $1$ curve; see~\cite[Example~2.4 and Proposition~3.9]{antieau-auel} for the case when $k$ is non-archimedean. Moreover, by the aforementioned \cite[Theorem~C]{antieau-auel}, the Clark--Saltman question has a positive answer for cyclic classes in $\Br(k)[n]$ if there exists an elliptic curve $E$ over $k$ with full level $n$ structure. Finally, the question of Clark--Saltman has a positive answer if $k$ is a global field and $n=7$, and also for $n\in\{8,9,10,12\}$ under additional assumptions on the roots of unity in $k$; see \cite{antieau-auel}. In particular, the Clark--Saltman question over $k=\bQ$ is known to have a positive answer for Brauer classes of index $\leq 7$. 

There is also a very suggestive geometric construction, attributed to Michael Artin, which further supports this expectation. Let $\alpha$ be a cyclic Brauer class over a field $k$, choose a cyclic Galois algebra $L/k$ of degree $d\geq 3$ splitting $\alpha$, and let $X$ be a Severi--Brauer variety representing $\alpha$ of dimension $d-1$. After identifying $X_L$ with $\bP^{d-1}_L$ in such a way that the Galois action cyclically permutes the coordinates up to scalar multiplication, the standard $d$-gon formed by the coordinate lines descends to a nodal curve of arithmetic genus $1$ in $X$. If this nodal curve could be smoothed over $k$, one would obtain the desired genus $1$ curve in $X$. It is a folklore result, see \cite[Remark~3.3]{mackall:enc}, that this strategy can be made to work over large fields; see Proposition~\ref{nodal} and Remark~\ref{rmk:large}. Recall that a field $k$ is large if every curve over $k$ with a smooth $k$-point has a Zariski-dense set of $k$-points; for example, local fields, Laurent series fields, and $p$-closed fields (for some prime $p$) are large, see \cite{pop:large}. In particular, over a general field $k$, for every prime $p$, every cyclic class in $\Br(k)$ is split by a genus $1$ curve after passing to a suitable finite field extension of $k$ of prime-to-$p$ degree. However, global fields are not large, and so this strategy does not apply to them. 

\paragraph{Main results.}

Our main result is a positive answer to the Clark--Saltman question over all number fields. 

\begin{theorem}\label{maincor}
Let $k$ be a number field. For every Brauer class $\alpha\in \Br(k)$, there exists a genus $1$ curve $C$ over $k$ such that $\alpha_{k(C)}=0$ in $\Br(k(C))$.
\end{theorem}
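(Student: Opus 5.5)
Reduce Theorem~\ref{maincor} to the geometric statement in the abstract: every Severi--Brauer variety $X$ of dimension $n-1\geq 2$ over a number field $k$ contains a twisted elliptic normal curve, i.e.\ a genus $1$ curve $C\subset X$ which over $\kbar$ becomes an elliptic normal curve of degree $n$ in $\bP^{n-1}$. Granting this, the theorem is immediate. If $\alpha=0$, any genus $1$ curve works. If $\alpha\neq 0$, choose a Severi--Brauer variety $X$ representing $\alpha$ of dimension $\geq 2$; this is possible by replacing the division algebra $D$ with $M_m(D)$, which does not change the class. A curve $C\subset X$ gives a $k(C)$-point of $X$, so by Amitsur's theorem $\alpha_{k(C)}=0$.

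To produce $C$, I will work with the Hilbert scheme $H$ of elliptic normal curves of degree $n$ in $X$. Over $\kbar$ this is an open subscheme of $\Hilb(\bP^{n-1})$. It is smooth and irreducible of dimension $n^2$, and it fibres over the moduli of genus~$1$ curves with a degree~$n$ line bundle. Concretely, $H_{\kbar}$ is $\PGL_n$-homogeneous over the $j$-line: the fibre over a curve $E$ is $\PGL_n/(E[n]\rtimes\text{automorphisms})$, roughly, via the Heisenberg group. Twisting by the cocycle defining $X$ gives $H$ over $k$. The goal is to show $H(k)\neq\emptyset$.

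\begin{enumerate}
\item Local points. For every place $v$, $H(k_v)\neq\emptyset$. Every class over $k_v$ is cyclic and $k_v$ is large, so Proposition~\ref{nodal} together with Remark~\ref{rmk:large} (smoothing Artin's $d$-gon) gives a genus~$1$ curve in $X_{k_v}$. Some care is needed to get one of degree exactly $n$; I would use the $n$-gon attached to a cyclic splitting field of degree $n$, which exists over local fields.
\item Fibration method. Choose a smooth compactification $\Bar H$ and fibre it over an open part of the $j$-line (or over $\bA^1$ via a parameter for a pencil of curves). The generic fibre is a homogeneous space $\PGL_n$-twisted by the class of $X$, with stabilizer an extension of a finite group by the $n$-torsion of an elliptic curve. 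By Borovoi-type results (Harari, and Borovoi for homogeneous spaces with supersolvable or abelian finite stabilizers), the Brauer--Manin obstruction is the only one for such fibres. Harari's fibration theorem, with split fibres over a base $\bP^1$, then gives density of $\Bar H(k)$ in $\Bar H(\mathbb{A}_k)^{\Br}$. Because $H$ contains a dense open whose points are curves, this yields a $k$-point of $H$ provided the Brauer--Manin set is nonempty.
\item Brauer group computation. Show that $\Br(\Bar H)/\Br(k)$ imposes no obstruction, ideally because it is trivial or killed by adjusting adelic points at one place. I would compute $\Br_{\nr}$ of the generic fibre via the stabilizer (using $H^1(\Gal,\Pic)$ and unramified algebraic Brauer group formulas for homogeneous spaces). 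I would then show that the remaining classes evaluate constantly, using fibres over $j$-values where the stabilizer becomes large or where the fibre has $k$-points, for instance near the $n$-gon degeneration.
\end{enumerate}

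The main obstacle is step 3: the vanishing of the Brauer--Manin obstruction, together with verifying the hypotheses of the fibration theorem (split or geometrically integral fibres of codimension one, and the correct stabilizer). I would first try to show that the unramified Brauer group of the generic fibre is spanned by constant classes plus classes pulled back from the base. Those pulled-back classes can be neutralized by the choice of a suitable $j$-value, following Harari's formal lemma.
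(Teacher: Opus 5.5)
Your architecture is the paper's: the Hilbert scheme $V(n)^A$, the fibration over the $j$-line, ``Brauer--Manin is the only obstruction'' on the fibres, and Harari's Theorem~\ref{harari-thm}. But there is a genuine gap at exactly the step you flag as the main obstacle. Nothing in the proposal shows that the Brauer--Manin set of $\Bar{H}$ is non-empty. Steps 1 and 2 give adelic points and density of $\Bar{H}(k)$ in the Brauer--Manin set, but that set could a priori be empty, and Step 3 is a wish list rather than an argument. Your hope that the unramified Brauer group of the generic fibre is spanned by constant and vertical classes is not established, and it is far from routine. The $\SL(A)$-stabilizers are nonabelian (central $\mu_n$-extensions of $E[n]\rtimes\Aut(E_{\kbar})$). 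The only Brauer vanishing the paper proves (Proposition~\ref{brauer-vanishing}, Lemma~\ref{lemma-w}) needs the Harpaz--Wittenberg formula, a detailed analysis of the Heisenberg group, a base change from the $j$-line to a family whose $2n$-torsion has image containing $\SL_2$, and quadratic twisting to get local points on that family. ``Evaluating constantly near the $n$-gon'' does not replace this. You also leave hypothesis (i) of Harari's theorem open, which is delicate over $j=0,1728$, where the fibres of $f^A$ are nowhere reduced. Finally, for the fibres you need a theorem for nonabelian solvable stabilizers (the paper uses Demeio's), not Borovoi's abelian case.

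The missing idea is that the $n$-gon is a global tool, not only a local one. Over a number field every central simple algebra is cyclic, so a cyclic Galois algebra of degree $n$ splitting $A$ exists over $k$ itself. Proposition~\ref{nodal} then gives a $k$-point of the closure of $V(n)^A$ in $\Hilb(n)^A$ at which the Hilbert scheme is smooth. Hence a smooth projective model $W\supset V(n)^A$ with $\Bar{f}\colon W\to\bP^1_k$ can be chosen with $W(k)\neq\emptyset$ (take $W$ to contain the smooth locus of the closure, or use Lang--Nishimura). By reciprocity the diagonal image of this point lies in the Brauer--Manin set, so no Brauer group needs to be computed at all. The rest is short:
\begin{itemize}
\item \emph{Split fibres.} $A$ splits over $K=k(\SB(A))$, so by Proposition~\ref{prop:pgla-stabilizer} the generic fibre has a $K(t)$-point and $\Bar{f}_K$ has a section. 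Since $k$ is algebraically closed in $K$, Lemma~\ref{skorobogatov-criterion} makes every closed fibre split.
\item \emph{Density.} Harari's theorem then gives density of $W(k)$ in the Brauer--Manin set of $W$.
\item \emph{Landing in the open part.} Since $W_{\kbar}$ is rationally connected, $\Br(W)/\Br(k)$ is finite. So you can move the global point $v$-adically into $V(n)^A(k_v)$ at every place without changing any Brauer evaluation, and then approximate, obtaining a point of $V(n)^A(k)$ (Lemma~\ref{lemma:bm-open-rc}).
\end{itemize}
Your sentence ``this yields a $k$-point of $H$ provided the Brauer--Manin set is non-empty'' silently needs this finiteness as well.
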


Let $X$ be a Severi--Brauer variety of dimension $n-1\geq 2$ which represents the Brauer class $\alpha$. Recall that $\alpha$ is split by a genus $1$ curve $C$ over $k$ if and only if there exists a morphism $C\rightarrow X$ over $k$. We prove Theorem~\ref{maincor} by establishing the stronger result of existence of a \emph{twisted elliptic normal curve} in $X$, that is, a genus $1$ curve $C\subset X$ such that $C_{\kbar}\subset X_{\kbar}\cong \bP_{\kbar}^{n-1}$ is an elliptic curve of degree $n$ not contained in any hyperplane, equivalently, $C_{\kbar}$ is embedded in $\bP_{\kbar}^{n-1}$ by a complete linear system of degree $n$. 
	
\begin{theorem}\label{mainthm}
Let $k$ be a number field and let $X$ be a Severi--Brauer variety of dimension $\geq 2$ over $k$. There exists a twisted elliptic normal curve $C\subset X$ over $k$. 
\end{theorem}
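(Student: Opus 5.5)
Write $n-1=\dim X\geq 2$ and let $A$ be the central simple algebra of degree $n$ with $X=\SB(A)$. Over $\kbar$, elliptic normal curves of degree $n$ in $\bP^{n-1}$ form an open subscheme $H^\circ$ of the Hilbert scheme, isomorphic to a fibration over the moduli of pairs (elliptic curve, degree $n$ line bundle) with fibres $\PGL_n$-torsors. Its $k$-form $H=H^\circ(X)$ parametrizes twisted elliptic normal curves in $X$. So Theorem~\ref{mainthm} reduces to showing $H(k)\neq\emptyset$. The plan is to show that the rational points of a smooth compactification $\Bar H$ of $H$ are dense in the Brauer--Manin set $\Bar H(\mathbf{A}_k)^{\Br}$. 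The local input is that this set is nonempty with points in $H$ at every place. Locally everything is fine: local fields are large, so Proposition~\ref{nodal} and Remark~\ref{rmk:large} give twisted elliptic normal curves in $X_{k_v}$ by smoothing the descended Artin $n$-gon. Hence $H(k_v)\neq\emptyset$ for all $v$, and by the implicit function theorem these points form open sets.

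\textbf{Step 1 (geometry of $H$).} I would use the natural map $H\to \sM$, sending a curve $C$ to its Jacobian together with the data of the degree $n$ class. More concretely, I would use the map to the moduli of genus $1$ curves with level structure, or to the $j$-line. The fibre over a $k$-point $[E]$ parametrizes $E$-torsors $C$ with a class $L$ of degree $n$, together with an isomorphism of the associated Severi--Brauer variety with $X$. By O'Neil--Clark theory, the obstruction is the period-index map $H^1(k,E[n])\to \Br(k)[n]$. So the fibre over $E$ is nonempty iff $\alpha$ lies in the image of $H^1(k,E[n])$ under the obstruction map. Geometrically, each fibre is a homogeneous space of the form $\PGL_n$-torsor/(Heisenberg group) over a twisted form of $E[n]$-torsors. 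That is, $H_E$ is a homogeneous space of $\SL_n$ (or $\PGL_n$) with finite stabilizer the theta/Heisenberg group, which is a connected-linear-group homogeneous space with finite stabilizer. I would choose the base of the fibration to be rational, for instance the $j$-line or a modular curve $X(1)$ coarse base $\bA^1$.

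\textbf{Step 2 (fibration method).} For homogeneous spaces of $\SL_n$ with finite (supersolvable, here nilpotent Heisenberg) stabilizer, the Brauer--Manin obstruction is the only one, with density in the Brauer--Manin set; this is Harari's theorem for supersolvable stabilizers. The Heisenberg group of level $n$ is nilpotent, so Harari's result should apply, at least after twisting appropriately. Then I would apply the fibration method (Harari's formal lemma version, or Harpaz--Wittenberg over $\bP^1$ with split fibres) to $\Bar H\to\bP^1_j$ to conclude density of $\Bar H(k)$ in $\Bar H(\mathbf A_k)^{\Br}$. This needs control of the vertical Brauer group and smooth fibres over a Hilbert subset.

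\textbf{Step 3 (no obstruction).} I would then show that $\Br(\Bar H)/\Br(k)$ does not obstruct, i.e.\ $\Bar H(\mathbf{A}_k)^{\Br}\neq\emptyset$. One approach is to compute $\Br_{\nr}$ of the homogeneous-space fibres via Borovoi's formulas for finite stabilizers, and show that the unramified algebraic Brauer group of $H$ is killed or evaluates constantly on suitable adelic points. Alternatively, I would exploit the freedom to vary local points: the local points coming from different fibres realize all local evaluations, and the evaluation pairing can be made to sum to zero. I expect this computation of the unramified Brauer group, together with the verification that the local points from Proposition~\ref{nodal} can be adjusted to be orthogonal to it, to be the main obstacle. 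The reason is that the stabilizer is nonabelian and the relevant Galois module involves the twist by $\alpha$. Once it is done, density yields $H(k)\neq\emptyset$, i.e.\ a twisted elliptic normal curve $C\subset X$.
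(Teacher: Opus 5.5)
The decisive gap is Step~3. You flag it yourself as the main obstacle and leave it open. Neither proposed strategy is carried out, and ``adjusting local points'' is only a heuristic: nothing in your argument controls which Brauer evaluations the local points realize.

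The idea you are missing is that Proposition~\ref{nodal} holds over $k$ itself, not only over the completions. Every central simple algebra over a number field is cyclic, so the Galois descent of Artin's $n$-gon is a \emph{smooth $k$-point} of the closure of $V(n)^A$ in $\Hilb(n)^A$. Compactifying a smooth open neighbourhood of this point (Hironaka) gives a smooth projective $W\supset V(n)^A$ with $W(k)\neq\emptyset$. Hence the Brauer--Manin set of $W$ is nonempty for free, with no Brauer group computation at all.

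To land in the open part, use two facts: $\Br(W)/\Br(k)$ is finite because $W_{\kbar}$ is rationally connected, and evaluation of Brauer classes is locally constant. Moving the image of a rational point $P$ slightly into $V(n)^A(k_v)$ at every place leaves all evaluations unchanged. This gives an adelic point of $V(n)^A$ orthogonal to $\Br(W)$. Density of $W(k)$ in the Brauer--Manin set then produces a rational point in the open set $V(n)^A(k_{v_0})$, hence in $V(n)^A(k)$ (Lemma~\ref{lemma:bm-open-rc}).

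Your Brauer-group route is not hopeless. The paper proves $\Br_{\nr}=\Br_0$ for a closely related total space in its weak approximation section, using triviality of the Bogomolov multiplier of $\mathrm{He}_n$, the Harpaz--Wittenberg description of $\Br_{1,\nr}$ of homogeneous spaces, and a family over the $j$-line with large $2n$-torsion image. That is substantial work, and your proposal does not do it.

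Step~2 also relies on an input that does not apply as stated. Nilpotency of $\mathrm{He}_n$ as an abstract group is not what the supersolvable theorems of Harari and Harpaz--Wittenberg need. As far as I know, they require a normal series with cyclic quotients that is stable under the outer Galois action. By Proposition~\ref{prop:vea-stabilizer-ab}, $\mathrm{He}_n^{\mathrm{ab}}\cong E[n]$ as Galois modules, so such a series would give a Galois-stable filtration of $E[n]$ with cyclic quotients. No such filtration exists when the mod-$n$ image contains $\SL_2(\bZ/n\bZ)$. That is the generic situation along the $j$-line, and quadratic twisting does not change it.

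Moreover, the fibres over the $j$-line are $V_j^A$, not $V_E^A$. Their stabilizer is an extension of $\Aut_E$ by $\mathrm{He}_n$, which is solvable but in general not nilpotent. The paper instead uses Demeio's theorem for finite \emph{solvable} stabilizers (Theorem~\ref{demeio-thm}).

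Finally, the split-fibre hypothesis of the fibration method is not automatic here, since the fibres of $f^A$ over $0$ and $1728$ are non-reduced. The paper gets it from Amitsur's theorem: over $k(\SB(A))$ the algebra splits, so the compactified fibration acquires a section. Lemma~\ref{skorobogatov-criterion} then makes all closed fibres split.
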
	
    
The key to the proof of Theorem~\ref{mainthm} is to view it as a statement about existence of rational points on a Hilbert scheme. Indeed, for a fixed central simple $k$-algebra $A$, the twisted elliptic normal curves in the Severi--Brauer variety $\SB(A)$ form a quasi-projective $k$-variety $V(n)^A$, and Theorem~\ref{mainthm} asserts that this Hilbert scheme has a $k$-point whenever $k$ is a number field. An important feature of the argument is that we do not fix the Jacobian of the genus $1$ curve in advance, but we allow it to move in the moduli stack $\sM_{1,1}$. Through the fibration method, this makes it possible to overcome the Brauer--Manin obstructions to existence of rational points that may appear on individual fibers.  We remark that the known geometric constructions, see \cite{dejong-ho}, which prove Theorem~\ref{mainthm} for Severi--Brauer varieties of dimension $2,3,4$ over every field, actually show that the Hilbert scheme $V(n)^A$ is unirational over $k$, hence contains a Zariski dense set of $k$-points.

We also prove a local-global principle for the existence of twisted elliptic normal curves whose Jacobian is a prescribed elliptic curve $E$ over $k$, under a large Galois image assumption on the torsion subgroup $E[2n](\kbar)$.
	
\begin{theorem}\label{local-global-thm}
Let $k$ be a number field, let $\Gamma_k$ be the absolute Galois group of $k$, let $X$ be a Severi--Brauer variety of dimension $n-1\geq 2$ over $k$, let $E$ be an elliptic curve  over $k$ such that the image of the natural homomorphism $\Gamma_k\to \GL(E[2n](\kbar))$ contains $\SL(E[2n](\kbar))$, and let $j_0=\jj(E)\in k$ be the $j$-invariant of $E$. 
\begin{itemize}
\item[{\em (1)}] If, for every place $v$ of $k$, there exists a twisted elliptic normal curve $C_v\subset X_{k_v}$ such that $\Pic^0_{C_v/k_v}\cong E_{k_v}$ over $k_v$, then there exists a twisted elliptic normal curve $C\subset X$ such that $\Pic^0_{C/k}\cong E$ over $k$.

\item[{\em (2)}] If, for every place $v$ of $k$, there exists a twisted elliptic normal curve $C_v\subset X_{k_v}$ such that $\jj(\Pic^0_{C_v/k_v})=j_0$, then there exists a twisted elliptic normal curve $C\subset X$ such that $\jj(\Pic^0_{C/k})=j_0$.
\end{itemize}
\end{theorem}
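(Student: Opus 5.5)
The plan is to recast the existence of a twisted elliptic normal curve with prescribed Jacobian as the existence of a rational point on a homogeneous space of $\SL_1(A)$ with finite stabilizer, where $X=\SB(A)$ and $\deg A=n$. I would then combine two inputs: a theorem that the Brauer--Manin obstruction is the only one for such spaces, and a computation showing that the large Galois image hypothesis kills the unramified Brauer group, so the obstruction vanishes.

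\textbf{Step 1 (moduli description).} Fix the elliptic normal curve $E\subset\bP^{n-1}$ embedded by $|n\cdot O|$. Over $\kbar$, every elliptic normal curve of degree $n$ with Jacobian $E$ is a $\PGL_n$-translate of $E$. Its stabilizer in $\PGL_n$ is the group $E[n]\rtimes\Aut(E)$, which acts by translations and automorphisms preserving the class of $n\cdot O$; when $\jj(E)\neq 0,1728$ this is $E[n]\rtimes\{\pm1\}$. If one also remembers the identification $\Pic^0_C\cong E$, the stabilizer shrinks to $E[n]$. Lifting to $\SL_n$, it becomes the theta (Heisenberg) group $\Theta_E$, a central extension of $E[n]$ by $\mu_n$.

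Twisting by $A$, I would let $Y_E$ denote the $k$-variety of pairs $(C\subset X,\ \Pic^0_C\cong E)$. This $Y_E$ is a homogeneous space of $G=\SL_1(A)$ with geometric stabilizer $\Theta_E$, which is finite, nilpotent and supersolvable. Similarly, $Y_{j_0}$, the variety of curves $C\subset X$ with $\jj(\Pic^0_C)=j_0$, is a homogeneous space of $G$ with stabilizer $\Theta_E\rtimes\mu_2$; this group is still supersolvable. The large image hypothesis rules out CM, so $j_0\neq 0,1728$. Parts (1) and (2) then ask exactly that $Y_E(k)$, respectively $Y_{j_0}(k)$, be nonempty, given that all the sets $Y_E(k_v)$, respectively $Y_{j_0}(k_v)$, are nonempty.

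\textbf{Step 2 (only the Brauer--Manin obstruction).} Since $G$ is simply connected and the stabilizer is supersolvable, I would invoke the theorem of Harpaz--Wittenberg: for homogeneous spaces of $\SL_n$-type groups with supersolvable finite stabilizer, the Brauer--Manin obstruction is the only obstruction to the Hasse principle and to weak approximation. This holds with respect to the unramified Brauer group of a smooth compactification. Inner forms $\SL_1(A)$ should reduce to this case via the standard embedding $\SL_1(A)\subset \SL_N$, since the quotient $\SL_N/\SL_1(A)$ is rational, or by citing the version of the theorem valid for arbitrary semisimple simply connected $G$.

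\textbf{Step 3 (vanishing of $\Br_{\nr}$; the main obstacle).} By the formulas of Harari and Lucchini Arteche, $\Br_{\nr}(Y)/\Br_0(Y)$ for $Y=G/F$ with $G$ simply connected is described by Galois cohomology of $F$. It is the subgroup of $H^1(k,\widehat F)$ (together with an algebraic part coming from $H^2$ of $F$ with $\mu$-coefficients, and its transgressions) cut out by the condition of being trivial on restriction to all cyclic subgroups of decomposition type. I expect this step to be the hardest.

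Here $F=\Theta_E$, and the Galois action on $F$ factors through the image of $\Gamma_k$ in $\GL(E[n])$. The action on the extension data, in particular on the $\mu_n$-central part and on the Weil pairing, involves $E[2n]$. This is presumably why the hypothesis is imposed on $E[2n]$, since the quadratic refinement of the commutator pairing lives there. The key computation I would attempt is that, once the image contains $\SL(E[2n])$, both $H^1(\mathrm{image},\widehat{\Theta_E}^{\,ab})$ and the relevant part of $H^2$ vanish. The reason is that $\SL_2(\bZ/2n)$ has essentially no invariants or coinvariants on $E[n]$ and its duals. I would check this by restricting to a unipotent subgroup and to the scalars $-1$, and by handling the primes $2$ and $3$, where $\SL_2$ is not perfect, separately.

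Once $\Br_{\nr}(Y)=\Br_0(Y)$, the Brauer--Manin set equals the full adelic set, which is nonempty by hypothesis, so $Y(k)\neq\emptyset$. For part (2), the same computation with the extra $\mu_2$ factor uses $-1\in\SL(E[2n])$ to kill the additional classes.
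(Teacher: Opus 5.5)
Your overall architecture matches the paper's: view the problem as finding a rational point on the $\SL(A)$-homogeneous space $V_E^A$ with Heisenberg stabilizer, show that the Brauer--Manin obstruction is the only one, and use the large image to force $\Br_{\nr}(V_E^A)=\Br_0(V_E^A)$. However, the two load-bearing steps do not go through as written. In Step~2, the Harpaz--Wittenberg supersolvable descent theorem does not apply. As far as I know, its hypothesis is supersolvability of the finite stabilizer \emph{as a $k$-group}, i.e.\ a normal series by Galois-stable subgroups with cyclic successive quotients; nilpotency of the abstract group $\Theta_E$ is not enough. Under your hypothesis, Galois acts on $\Theta_E^{\mathrm{ab}}\cong E[n]$ through a group containing $\SL_2(\bZ/n\bZ)$. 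So $E[\ell]$ is irreducible for every prime $\ell\mid n$, and no such series exists, neither for $\Theta_E$ nor for $\Theta_E\rtimes\mu_2$. This is precisely why the paper uses Demeio's theorem (Theorem~\ref{demeio-thm}). That theorem covers homogeneous spaces of semisimple simply connected groups with finite \emph{solvable} geometric stabilizer and imposes no condition on the Galois action.

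In Step~3, $\Br_1(V_E^A)/\Br_0(V_E^A)\cong H^1(k,E[n])$ is infinite. To see that unramified classes are inflated from a finite group, you need a finite Galois $L/k$ with $V_E^A(L)\neq\emptyset$ through which the outer action factors; this is the hypothesis of Proposition~\ref{brauer-homogeneous}. Your plan tacitly assumes this. The paper takes $L=k(E[2n])$ and produces the $L$-point from Proposition~\ref{full-level-structure}, which uses full level $n$ structure, cyclicity of $A$, and the period-index obstruction formula. That ingredient is missing from your proposal.

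The mechanism you propose, vanishing of $H^1$ of the image with coefficients in $E[n]$, is also not what the argument rests on. The central element $-1$ proves that vanishing only for odd $n$; for even $n$ it merely shows the group is $2$-torsion. The paper instead reduces to $\zeta_{2n}\in k$; restriction to $k(\zeta_{2n})$ is injective because $E[n]^{\Gamma_{k(\zeta_{2n})}}=0$. It then uses the unramifiedness conditions at the two unipotent generators $g_1,g_2$ of $\SL_2(\bZ/2n\bZ)$. The conjugacy-class fact in Lemma~\ref{theta-lemma}(e), with Corollary~\ref{brauer-corollary}, gives $g_1^*\beta=g_2^*\beta=0$, and Lemma~\ref{sln-cyc} then yields $\beta=0$.

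The transcendental part is the Bogomolov multiplier of $\mathrm{He}_n$. It vanishes for purely group-theoretic reasons, since the commutator map $\bigwedge^2 H^{\mathrm{ab}}\to Z(H)$ is an isomorphism, not because of a Galois-invariants computation.

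Finally, for (2) your computation on $Y_{j_0}$ is only asserted. The paper avoids it entirely. It picks a global quadratic twist $E'$ of $E$ matching the local Jacobians at the finitely many places where $A$ is nonsplit. It checks via Lemma~\ref{image-of-twist} that $E'$ still has large image, and then applies (1) to $E'$.
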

	
The large-image condition in Theorem~\ref{local-global-thm} holds for most elliptic curves over $k$, in the sense of Hilbert's irreducibility theorem.

Finally, the same circle of ideas gives not only existence of rational points, but weak approximation. Recall that a $k$-variety $X$ is said to satisfy \emph{weak approximation} if for every finite set $S$ of places of $k$, the image of the diagonal map $X(k)\to \prod_{v \in S} X(k_v)$ is dense. Let $V(n)^A$ be the Hilbert scheme of twisted elliptic normal curves in the Severi--Brauer variety of $A$, as defined in Section~\ref{twisted-case}.
    		
\begin{theorem}\label{weak-approximation-thm}
For every number field $k$ and every central simple algebra $A$ of degree $n\geq 3$ over $k$, the $k$-variety $V(n)^A$ satisfies weak approximation.
\end{theorem}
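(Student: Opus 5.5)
We will deduce weak approximation for $V(n)^A$ from the fibration method applied to the $j$-invariant map. The Hilbert scheme carries a morphism $\jj\colon V(n)^A\to \bA^1_k$ sending a twisted elliptic normal curve $C$ to the $j$-invariant of $\Pic^0_{C/k}$. Over the open set where $j\neq 0,1728$, this map factors through the moduli of pairs consisting of an elliptic curve $E$ and a twisted embedding of an $E$-torsor. Over the coarse space, the fiber above $j_0$ is, up to twists, a homogeneous space: after choosing $E$ with $\jj(E)=j_0$, the twisted elliptic normal curves with Jacobian $E$ correspond to torsors under $E[n]$, or rather under the Heisenberg-type group $\PGL_n$-normalizer data. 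Concretely, such a fiber is a homogeneous space of $\PGL(A)=\mathrm{Aut}(\SB(A))$ with stabilizer an extension of $E[n]\rtimes\{\pm1\}$ by a finite group.

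\textbf{Step 1.} Describe the fibers of $\jj$ over a dense open $U\subset\bA^1$ as $\mathbf{SL}$-type homogeneous spaces $\PGL_1(A)/H_E$, with $H_E$ finite. Then show that $V(n)^A$ admits a smooth compactification $\widetilde V$ with a fibration $\widetilde V\to\bP^1$ whose generic fiber is geometrically rational (a compactification of a homogeneous space of $\PGL_n$). We will also need that $V(n)^A$ is geometrically irreducible and $\widetilde V_{\kbar}$ is rationally connected. This lets us use the theorem, stated in the abstract, that $\widetilde V(k)$ is dense in the Brauer--Manin set $\widetilde V(\bA_k)^{\Br}$; we take this as the input established earlier, via Harpaz--Wittenberg's fibration theorem over $\bP^1$ combined with Borovoi's result that Brauer--Manin is the only obstruction to weak approximation for homogeneous spaces with finite (supersolvable) stabilizers, or Harari's results.

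\textbf{Step 2.} Show that the unramified Brauer group $\Br(\widetilde V)/\Br(k)$ vanishes, or at least that $\Br(\widetilde V)$ pairs trivially with $\prod_v \widetilde V(k_v)$. Then the Brauer--Manin set is all of $\widetilde V(\bA_k)$, so $\widetilde V(k)$ is dense in $\prod_{v\in S}\widetilde V(k_v)$ for every finite $S$. Since $V(n)^A$ is a dense open in a smooth variety, the implicit function theorem gives that $V(n)^A(k_v)$ is open and dense in $\widetilde V(k_v)$ whenever nonempty. This yields weak approximation for $V(n)^A$ itself. For nonemptiness locally, we use that over each $k_v$ (a large field) Proposition~\ref{nodal} produces points, so each $V(n)^A(k_v)\neq\emptyset$.

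\textbf{Main obstacle.} The main obstacle is Step 2: computing $\Br_{\nr}$ of the total space. Individual fibers may carry nontrivial transcendental or algebraic Brauer classes, but these need not extend over the whole family. My plan is as follows. First, compute $\Br(\kbar(\widetilde V))$ via the generic fiber. Then show that any vertical or horizontal unramified class would be constant on fibers and detected by ramification along the degenerate fibers $j=0,1728,\infty$. The key point is monodromy: as $j$ varies, the Galois action on $E[n]$ becomes as large as possible (big image, by Hilbert irreducibility). Hence $H^1$ of the stabilizer with the relevant coefficients has no invariant classes, which kills $\Br_1(\widetilde V)/\Br(k)$. For $\Br(\widetilde V_{\kbar})$ we would use rational connectedness together with a direct computation for the geometric generic fiber. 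If vanishing fails, we fall back to evaluating the residual classes on local points and showing that their local invariants can be adjusted freely at an auxiliary place outside $S$.
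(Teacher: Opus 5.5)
Your overall plan (density in the Brauer--Manin set for a smooth compactification $\widetilde V$, then showing the obstruction vanishes) is reasonable, and Step 1 can be quoted from the proof of Theorem~\ref{mainthm}. Note that the input there is Demeio's theorem for solvable stabilizers. Borovoi's results and the Harpaz--Wittenberg supersolvable theorem do not suffice, because Galois acts irreducibly on $E[n]$.

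\textbf{The gap is Step 2.} It is the whole content of the theorem, and you only sketch it, with a mechanism that does not work as stated.

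\emph{The fibers of $f^A$ are not the spaces your argument treats.} They are $V_j^A=V_E^A/\Aut_E$, not $V_E^A$. For $j\notin\{0,1728\}$, the geometric stabilizer in $\SL(A)$ is the preimage of $E[n]\rtimes\mu_2$; the $\mu_2$ comes from $[-1]$, which preserves $\OO(n\cdot 0_E)$.
\begin{itemize}
\item Its abelianization surjects onto $\bZ/2$ with trivial Galois action, so $H^1(k(t),\cdual{H}^{\mathrm{ab}})$ contains a copy of $H^1(k(t),\mu_2)$.
\item Large monodromy on $E[n]$ says nothing about these classes. Deciding which of them are unramified is a separate computation that you have not done.
\item The same holds for the Bogomolov multiplier of this larger group, which is what controls the transcendental part. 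Rational connectedness only gives finiteness.
\end{itemize}

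\emph{Large image does not kill $H^1$ even on the $E[n]$-part.} It gives vanishing of invariants only; for instance $H^1(\SL_2(\bZ/8\bZ),(\bZ/4\bZ)^2)\neq 0$. In the paper, the algebraic classes on $V_E^A$ are killed using the Harpaz--Wittenberg description of $\Br_{1,\nr}$ via Galois-stable conjugacy classes (Proposition~\ref{brauer-homogeneous}, Corollary~\ref{brauer-corollary}). This is combined with Lemma~\ref{theta-lemma}(e) and with generation of $\SL_2(\bZ/2n\bZ)$ by two unipotents (Lemma~\ref{sln-cyc}).

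\emph{Your fallback cannot yield weak approximation.} An element of $\Br(\widetilde V)$ evaluates constantly on $\widetilde V(k_v)$ for almost all $v$. Weak approximation must hold for every finite $S$, including one containing all places where such a class varies. Adjusting at auxiliary places therefore gives at most weak weak approximation.

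Vertical classes, by contrast, are harmless and are not detected via ramification at $0,1728,\infty$. All fibers of the compactified fibration are split by Lemma~\ref{skorobogatov-criterion}, using the section over $k(\SB(A))$.

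\textbf{The missing idea} is how the paper avoids computing $\Br$ of a compactification of $V(n)^A$ at all.
\begin{itemize}
\item It passes to $V=V(n)^A\smallsetminus(f^A)^{-1}(\{0,1728\})$ and pulls $J^A$ back along Serre's family $\sE\to U=\bA^1_k\smallsetminus\{0,1728\}$. This family has $\jj(\sE_t)=t$, and the Galois image on $\sE_{k(t)}[2n]$ contains $\SL$.
\item The resulting $W\to U$ has generic fiber $V^A_{\sE_{k(t)}}$, with stabilizer $\mathrm{He}_n$ and $\cdual{H}^{\mathrm{ab}}\cong E[n]$. Proposition~\ref{brauer-vanishing} plus splitness of fibers give $\Br_{\nr}(W)=\Br_0(W)$, and Harari plus Demeio give weak approximation for $W$ (Lemma~\ref{lemma-w}).
\item The map $W(k_v)\to V(k_v)$ only reaches curves whose Jacobian is isomorphic to the corresponding fiber of $\sE$. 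So $\sE$ is first replaced by a quadratic twist matching the Jacobians of the prescribed local points at the places of $S$, using weak approximation for square classes. Large image survives by Lemma~\ref{image-of-twist}.
\item One then pushes forward approximating points of $W(k)$ and concludes by Kneser's birational invariance.
\end{itemize}
Base change along a section of $\sM_{1,1}\to\bA^1_k$, together with this twisting trick, is what replaces your unproved Step 2.
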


\paragraph{Proof sketch of Theorem~\ref{maincor}}

We recall that Theorem~\ref{maincor} is a direct corollary of Theorem~\ref{mainthm}, which we now sketch our proof of.  Let $A$ be a central
simple algebra of degree $n\geq 3$ over a number field $k$, and let
$\SB(A)$ be its Severi--Brauer variety. We let $V(n)^A$ be the open subscheme
of the Hilbert scheme of $\SB(A)$ parametrizing twisted elliptic normal
curves.  Thus $V(n)^A(k)\neq \emptyset$ if and only if $\SB(A)$ contains a twisted elliptic normal curve over $k$.

Let $\sM_{1,1}$ be the moduli stack of elliptic curves over $k$. Taking the Jacobian of the universal twisted elliptic normal curve over $V(n)^A$ gives a morphism $J^A\colon V(n)^A\to\sM_{1,1}$. For every elliptic curve $E$ over $k$, the fiber $V_E^A$ of $J^A$ over $E$ parametrizes twisted elliptic normal curves $C$ in $\SB(A)$ with Jacobian $E$. The translation action of $E[n]$ on $C$ extends to the tautological action of $\PGL(A)$ on $\SB(A)$, and this makes $V_E^A$ into a $\PGL(A)$-homogeneous space with geometric stabilizer isomorphic to $E[n]_{\kbar}$. More strongly, $J^A$ is a family of homogeneous spaces over $\sM_{1,1}$ for twisted forms of $\PGL_n$ with geometric stabilizer isomorphic to $\bZ/n\bZ\times_{\Bar{k}} \mu_n$; see Propositions~\ref{prop:pgl-homogeneous-a} and~\ref{prop:pgla-stabilizer}. Composing $J^A$ with the $j$-invariant gives a morphism $f^A\colon V(n)^A\to\bA^1_k$ such that for every $j\in k\smallsetminus\{0,1728\}$, the fiber $V_j^A\coloneqq (f^A)^{-1}(j)$ parameterizing twisted elliptic normal curves whose Jacobian has $j$-invariant $j$ is a homogeneous space under the simply connected semisimple $k$-group $\SL(A)$ with finite solvable geometric stabilizer. The morphism $f^A$ is smooth away from $0$ and $1728$, while the fibers of $f^A$ at $0$ and $1728$ are nowhere-reduced. 

This geometric construction brings the problem into the range of the Brauer--Manin obstruction machinery. We argue as follows. By a well-known deformation theory argument, Artin's $n$-gon determines by descent a smooth $k$-point of the closure of $V(n)^A$ in the Hilbert scheme of $\SB(A)$; see Proposition~\ref{nodal}. Therefore, by Hironaka's resolution of singularities, we can find a smooth projective $k$-variety $W$ containing $V(n)^A$ as a dense open subset, together with a morphism $\bar f\colon W\to \bP^1_k$ extending $f^A$. Then
\begin{itemize}
	\item[(i)] since $A$ is split over $k(\SB(A))$, the morphism $\Bar{f}$ has a rational section over $k(\SB(A))$ and hence, $W$ being proper and $k$ being algebraically closed in $k(\SB(A))$, all the fibers of $\Bar{f}$ are split (that is, they contain a non-empty geometrically integral open subscheme);
	\item[(ii)] the generic fiber of $\Bar{f}$ is geometrically integral and, since $A$ is split over $\kbar$, it admits a smooth $\kbar(t)$-point;
	\item[(iii)] because the geometric generic fiber of $\Bar{f}$ is rationally connected, its Picard group is torsion-free and its Brauer group is finite;
	\item[(iv)] there exists a nonempty open subscheme $U\subset\bA^1_k$ such that for all $u\in U(k)$, the fiber $W_u$ is a smooth compactification of $V_u$, and $V_u$ is an $\SL(A)$-homogeneous space with \emph{solvable} finite geometric stabilizer, so that by a theorem of Demeio \cite[Theorem 1.1]{demeio2026solvable} (see Theorem~\ref{demeio-thm} below), $W_u(k)$ is dense in the (unramified) Brauer--Manin set of $W_u$ with respect to the product of the $v$-adic topologies (as defined in \textsection\ref{subsec:brauer-manin}).
\end{itemize}

In (i), note that although the fibers of $f^A$ at $0$ and $1728$ are nowhere-reduced and hence are not split, the corresponding fibers of the compactified fibration $\bar f$ contain split irreducible components. We now apply the fibration method: by a theorem of Harari \cite[Proposition 3.1.1]{harari1997fleches} (see Theorem~\ref{harari-thm} below), properties (i)-(iv) imply that $W(k)$ is dense in the Brauer--Manin set of $W$ with respect to the product of the $v$-adic topologies. Since $W(k)$ is not empty, the Brauer--Manin set of $W$ is not empty. Moreover, as $W_{\kbar}$ is rationally connected, the cokernel of $\Br(k)\to \Br(W)$ is finite, and hence a simple approximation argument at finitely many places of $k$ (see Lemma~\ref{lemma:bm-open-rc}) suffices to show that $V(n)^A(k)$ is not empty, as desired.

Our proof of Theorems~\ref{maincor} and \ref{mainthm} are contained in Section~\ref{sec:thm1.2}, while Section~\ref{sec:hilb} contains background on the Hilbert scheme of twisted elliptic normal curves, the geometry of its fibers over the $j$-line, and a review of folklore results about degenerations of twisted elliptic normal curves inside cyclic Severi--Brauer varieties.  Our proof of Theorem~\ref{local-global-thm} is contained in Section~\ref{sec:local-global} and our proof of Theorem~\ref{weak-approximation-thm} is contained in Section~\ref{section-fixed-j-inv}.

\paragraph{Notation.} 
Let $k$ be a field, let $\kbar$ be a separable closure of $k$, let $\Gamma_k = \Gal(\kbar/k)$ be the absolute Galois group of $k$, and let $\Br(k)$ be the Brauer group of $k$.
        
By definition, a $k$-variety is a separated $k$-scheme of finite type. For a smooth proper $k$-variety $V$, denote by $\Pic_{V/k}$ the Picard scheme of $V$, by $\Pic^0_{V/k}$ the connected component of $\Pic_{V/k}$ containing the identity element, and by $\Pic(V)$ the Picard group of $V$. When $V$ is of pure dimension $1$, the abelian variety $\Pic^0_{V/k}$ is the Jacobian of $V$. Let $\Br(V)$ be the Brauer group of $V$, let $\Br_1(V)$ be the kernel of the base-change map $\Br(V)\to \Br(V_{\kbar})$, and let $\Br_0(V)$ be the image of the pullback map $\Br(k)\rightarrow\Br(V)$. For an arbitrary integral $k$-variety $V$, write $\Br_{\nr}(V)\subset \Br(V)$ for the unramified Brauer group of $V$: when $k$ has characteristic zero and $V$ is smooth this subgroup coincides with the image of $\Br(V^c)\to \Br(V)$, where $V^c$ is any smooth compactification of $V$. Let $\Br_{1,\nr}(V)\coloneqq \Br_{\nr}(V)\cap\Br_1(V)$.
		
If $S$ is a $k$-scheme and $s\in S$, we denote by $k(s)$ the residue field of $s$. If $f\colon X\to S$ and $T\to S$ are morphisms of schemes, we write $X_T$ for $X\times_ST$. In particular, if $s\in S$, we write $X_s=X\times_S\Spec (k(s))$. For a locally free sheaf $\VV$ on $S$, we use the quotient convention $\mathbf P(\VV)\coloneqq \operatorname{Proj}_S(\operatorname{Sym}\VV)$, so that $\mathbf P(\VV)$ parametrizes rank-one quotients of $\VV$. Thus a surjection $f^*\VV\twoheadrightarrow \LL$ onto an invertible sheaf $\LL$ defines a morphism $X\to \bP(\VV)$.
		
If $G$ is a profinite group and $M$ is a finite discrete $G$-module, let $M^G$ be the subgroup of $G$-invariant elements of $M$, and for fixed $g\in G$ let $M^g$ be the subgroup of $g$-invariant elements of $M$. For all $i\geq 0$, denote by $H^i(G,M)$ the $i$th continuous cohomology group of $M$, and for fixed $g \in G$ we let $g^*\colon H^i(G,M)\to H^i(\bZ,M)$ be the pullback map along the group homomorphism $\bZ\to G$ sending $1\in\bZ$ to $g$, where $1\in \bZ$ acts on $M$ via $g$. If $G=\Gamma_k$ is the absolute Galois group of $k$, we set $\cdual{M}\coloneqq \mathrm{Hom}_{\bZ}(M,\mu_{\infty})$, where $\mu_{\infty}\subset \kbar^\times$ is the $\Gamma_k$-submodule of roots of unity in $\kbar$.

For a free $\bZ/n\bZ$-module of finite rank $M$, we let $\GL(M)\coloneqq \Aut_{\bZ/n\bZ}(M)$ and $\SL(M)\subset \GL(M)$ be the kernel of the determinant $\det\colon \GL(M) \to (\bZ/n\bZ)^\times$. A choice of a $\bZ/n\bZ$-basis of $M$ identifies these groups with $\GL_r(\bZ/n\bZ)$ and $\SL_r(\bZ/n\bZ)$, where $r$ is the rank of $M$, respectively.
		
If $H$ is an algebraic $k$-group (for example, a finite constant group), we let $Z(H)$, $[H,H]$, and $H^{\mathrm{ab}}$ be the center, the derived subgroup, and the abelianization of $H$, respectively. If $H$ is a finite constant group, we also let $H/\mathrm{conj}$ be the set of conjugacy classes of $H$.  As a standard abuse of notation, if $G$ is a group scheme over a scheme $T$ and $X \to T$ is a $T$-scheme, we often also denote by $G$ the pullback of $G$ to $X$.
		
\paragraph{Acknowledgments.}
We thank Danny Krashen, Eoin Mackall, and David Saltman for conversations related to the topic of the present paper. Thanks also go to Julian Demeio for sharing his results on the Brauer--Manin obstruction with us before they were released, and to Olivier Wittenberg for suggesting the use of Harari's Theorem~\ref{harari-thm} to simplify our original proof of Theorem~\ref{mainthm}. The first author was supported by NSF grants DMS-2102010 and DMS-2152235, by Simons Fellowships 666565 and 00005925, and by the Simons Collaboration on Perfection (Award MP-SCMPS-00001529-11).  The second author was supported by NSF grant DMS-2200845 and by a Simons Foundation Collaboration Grant 712097.

\section{Hilbert schemes of twisted elliptic normal curves}
\label{sec:hilb}

Our proofs of Theorems~\ref{maincor} and~\ref{mainthm} are based on the following notion.

\begin{definition}
Let $k$ be a field, let $n\geq 3$, and let $X$ be a Severi--Brauer variety of dimension $n-1$ over $k$. A \emph{twisted elliptic normal curve} in $X$ is a smooth proper geometrically connected $k$-curve $C\subset X$ of genus $1$ such that for some (equivalently, every) field extension $K/k$ splitting $X$ and for some (equivalently, every) $K$-isomorphism $X_K\cong \bP^{n-1}_K$, the composite $C_K\hookrightarrow X_K\cong \bP^{n-1}_K$ is defined by a complete linear system on $C_K$, or equivalently it has degree $n$ and its image is not contained in any hyperplane.
\end{definition} 

In particular, a twisted elliptic normal curve in $\bP^{n-1}_k$ is a smooth projective geometrically connected $k$-curve $C\subset \bP^{n-1}_k$ of genus $1$, degree $n$, and not contained in any hyperplane.

\subsection{Preliminaries on the moduli stack of elliptic curves}

Let $k$ be a field, and let $\sM_{1,1}$ be the stack of elliptic curves over $k$. By \cite[Theorem 13.1.2]{olsson2016algebraic}, the stack $\sM_{1,1}$ is a smooth separated Deligne--Mumford stack over $k$. We let 
\begin{equation}\label{eq:universal-elliptic}
    \pi\colon \sE\longrightarrow \sM_{1,1},\qquad \Sigma\subset \sE
\end{equation}
be the universal elliptic curve and the image of its zero section, respectively.

By \cite[Theorem 13.1.15]{olsson2016algebraic}, the coarse moduli space morphism \begin{equation}\label{eq:j-invariant}\jj\colon \sM_{1,1}\longrightarrow \bA^1_k
\end{equation}
sends an elliptic curve $E$ to its $j$-invariant $\jj(E)$. For every elliptic curve $E$ over $k$, let $\mathrm{Aut}_{E/k}$ be the automorphism group $k$-scheme of $E$ as an elliptic curve; it is a finite \'etale $k$-group. 

For an algebraic $k$-stack $\mathcal X$, we let $\mathcal X_{\mathrm{red}}$ be the reduction of $\mathcal X$ in the sense of \cite[Definition 050C]{stacks}. The following lemma is standard.

\begin{lemma}\label{lem:m11}
Let $k$ be a field such that $\mathrm{char}(k)\notin\{2,3\}$.
\begin{enumerate}
\item[{\em (1)}] Let $U\coloneqq \bA^1_k\smallsetminus\{0,1728\}$. The morphism $\jj$ restricts to a neutral $\mu_2$-gerbe over $U$:
\[
\jj^{-1}(U)\cong U\times_k B\mu_2.
\]
\item[{\em (2)}] We have
\[
\jj^{-1}(0)_{\red}\cong B\mu_6,
\qquad
\jj^{-1}(1728)_{\red}\cong B\mu_4 .
\]
\item[{\em (3)}] Let $K/k$ be a field extension, and let $E$ be an elliptic curve over $K$. If $\jj(E)\notin\{0,1728\}$, then $\jj^{-1}(\jj(E))\cong B\Aut_{E/K}$. If $\jj(E)\in\{0,1728\}$, then $\jj^{-1}(\jj(E))_{\red}\cong B\Aut_{E/K}$.
\end{enumerate}
\end{lemma}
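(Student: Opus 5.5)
I will use explicit Weierstrass presentations. Since $\mathrm{char}(k)\notin\{2,3\}$, $\sM_{1,1}$ is the quotient stack $[\bA^2_k\smallsetminus\{\Delta=0\}/\Gm]$. Here $(a,b)$ corresponds to $y^2=x^3+ax+b$, $\lambda\in\Gm$ acts by $(a,b)\mapsto(\lambda^4a,\lambda^6b)$, and $\Delta=-16(4a^3+27b^2)$. The $j$-map is $j=1728\cdot 4a^3/(4a^3+27b^2)$, which is $\Gm$-invariant. All three parts then come from analysing the fibers of this map together with the stabilizers of the $\Gm$-action.

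For (1), I work over $U$, where $a\neq0$ and $b\neq0$. I take the slice $S\subset \bA^2\smallsetminus\{\Delta=0\}$ given by $a=b$. This slice is a family over $U$: for $j\neq 0,1728$ the equation $j=1728\cdot4a^3/(4a^3+27a^2)$ determines $a$ uniquely as $a=27j/(4(1728-j))$, so $S\cong U$. Every $\Gm$-orbit over $U$ meets $S$, because $(\lambda^4a,\lambda^6b)$ satisfies $a=b$ when $\lambda^2=a/b$. The stabilizer of $S$ in $\Gm$ is $\{\lambda^4=\lambda^6\}\cap\ldots=\mu_2$, and $\mu_2$ acts trivially since $\lambda^4=\lambda^6=1$. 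So the map $\Gm\times S\to \jj^{-1}(U)$-atlas exhibits $[\,(\bA^2\smallsetminus\Delta)|_U/\Gm]\cong[S/\mu_2]=U\times B\mu_2$. To make this rigorous, I will check that $\Gm\times^{\mu_2}S\to(\bA^2\smallsetminus\Delta)|_U$ is an isomorphism. It is a bijective map between smooth varieties, and it is étale because the orbit map is transverse to $S$; alternatively, one writes the inverse directly using $\lambda^2=a/b$, which determines a $\mu_2$-torsor. This gives the section and shows the gerbe is neutral.

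For (2), $\jj^{-1}(0)$ is $[\{a^3=0\}/\Gm]$ inside $\bA^2\smallsetminus\Delta$. Its reduction is $[\{a=0,b\neq0\}/\Gm]$. The group $\Gm$ acts on $\{b\neq0\}\cong\Gm$ via $\lambda\mapsto\lambda^6$, transitively with stabilizer $\mu_6$, so the quotient is $B\mu_6$. Similarly, $\jj^{-1}(1728)$ is cut out by $b^2=0$; its reduction is $\{b=0,a\neq0\}$ with action $\lambda^4$, giving $B\mu_4$. The only subtle point is that reduction commutes with passing to the quotient stack. This holds by the definition of reduction via a smooth atlas, as in \cite[Tag 050C]{stacks}.

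For (3), the residual gerbe at the point $\jj(E)\in\bA^1(K)$ is $\jj^{-1}(\jj(E))$ in case $j\neq0,1728$, and its reduction otherwise, by (1) and (2) after base change to $K$. This gerbe is neutralized by the object $E$, and a gerbe neutralized by an object $x$ is $B\underline{\Aut}(x)$. Here $\underline{\Aut}(x)=\Aut_{E/K}$. The main obstacle is the bookkeeping needed to identify reduced fibers with residual gerbes. I expect this to be routine with the explicit presentation: the reduced fiber is a gerbe over $\Spec K$, since all its $\kbar$-points are isomorphic by (1) and (2), and it has the $K$-point $E$.
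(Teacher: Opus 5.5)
Your proof is correct and follows essentially the same route as the paper: the presentation $\sM_{1,1}\cong[X/\Gm]$ with weights $(4,6)$, a $\Gm$-equivariant trivialization $X_U\cong U\times\Gm$ with weight-$2$ action (your slice $a=b$, i.e.\ the coordinate $b/a$, plays the role of the paper's coordinate $t=3b/(2a)$), and the same fiber computations $a^3=0$ and $b^2=0$ for (2). Your residual-gerbe argument for (3), which identifies the fiber with $B\Aut_{E/K}$ via the neutralizing object $E$, just spells out what the paper calls an immediate consequence of (1) and (2).
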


\begin{proof}
We use the presentation
\[
\sM_{1,1}\cong [X/\Gm],
\qquad
X=\Spec k[a,b,\Delta^{-1}],
\qquad
\Delta=-16(4a^3+27b^2),
\]
where $\lambda\in\Gm$ acts by $\lambda\cdot(a,b)=(\lambda^4a,\lambda^6b)$. 
A point of $[X/\Gm]$ with coordinates $(a,b)$ corresponds to the elliptic curve of affine equation $y^2=x^3+ax+b$. The map $\jj$ corresponds to a morphism
\[
[X/\Gm]\longrightarrow \bA^1_k,\qquad (a,b)\longmapsto 1728\frac{4a^3}{4a^3+27b^2}.
\]

For (1), let $X_U\coloneqq X\times_{\mathbf A^1_k}U$. Observe that $a,b$ are invertible on $X_U$. We have a $\Gm$-equivariant isomorphism
\[
U\times_k \Gm \xlongrightarrow{\sim} X_U,\qquad
(j,t)\longmapsto
\left(
\frac{3jt^2}{1728-j},
\frac{2jt^3}{1728-j}
\right)
\]
where $\Gm$ acts on $U\times_k\Gm$ by $\lambda\cdot(j,t)=(j,\lambda^2t)$. The inverse of this isomorphism is 
\[
X_U \xlongrightarrow{\sim} U\times_k \Gm,\qquad (a,b)\longmapsto
\left(
1728\frac{4a^3}{4a^3+27b^2},
\frac{3b}{2a}
\right).
\]
Therefore $\jj^{-1}(U)\cong [(U\times_k \Gm)/\Gm]$, where $\Gm$ acts trivially on $U$ and with weight $2$ on $\Gm$, and hence $\jj^{-1}(U)\cong U\times_k B\mu_2$.

For (2), at $j=0$, the scheme-theoretic fiber is cut out by $a^3=0$. Its reduction is given by $a=0$, with $b\neq0$, and therefore $\jj^{-1}(0)_{\red}
\cong [\Gm/\Gm]$, where $\Gm$ acts with weight $6$ on itself. Hence $\jj^{-1}(0)_{\red}\cong B\mu_6$. The proof of the isomorphism  $\jj^{-1}(1728)_{\red}\cong B\mu_4$ is entirely analogous.

Finally, (3) is an immediate consequence of (1) and (2).
\end{proof}

\subsection{The Hilbert scheme of degree \texorpdfstring{$n$}{n} twisted elliptic normal curves}
		
Let $k$ be a field, and let $n\geq 3$ be an integer. We let $V(n)$ be the Hilbert scheme of twisted elliptic normal curves in $\bP^{n-1}_k$, which is defined as follows.  See \cite[Section~2]{mackall:enc} for a similar construction. Let $P(t)=nt\in \bZ[t]$, and let $\Hilb(n)$ be the Hilbert scheme parametrizing closed subschemes of $\bP^{n-1}_k$ with Hilbert polynomial $P(t)$. Thus, for every $k$-scheme $S$, we have a functorial identification
		\begin{align}\label{hilbert-points}
			\begin{gathered}
				\Hilb(n)(S)=\{\text{$S$-flat closed subschemes $X\subset \bP^{n-1}_S$ such that $X_s\subset \bP^{n-1}_{k(s)}$ has dimension $1$,} \\\text{degree $n$, and arithmetic genus $1$ for every $s\in S$}\}.
			\end{gathered}
		\end{align}
		Let $\sC\subset \bP^{n-1}_k\times_k\Hilb(n)$ be the universal family, and let $V(n)\subset\Hilb(n)$ be the open subscheme consisting of points $z\in \Hilb(n)$ such that the fiber $\sC_z$ is smooth, not contained in any hyperplane of $\bP^{n-1}_{k(z)}$, and geometrically connected. Thus, $V(n)$ is a quasi-projective $k$-scheme of finite type and, for every $k$-scheme $S$, the identification of (\ref{hilbert-points}) induces a functorial identification
		\begin{align}\label{vn-points}
			\begin{gathered}
				V(n)(S)=\{\text{$S$-smooth closed subschemes $X\subset \bP^{n-1}_S$ such that $X_s\subset \bP^{n-1}_{k(s)}$}\\ \text{is a twisted elliptic normal curve for every $s\in S$}\}.
			\end{gathered}
		\end{align}
		By definition, a smooth genus $1$ curve over a $k$-scheme $S$ is a smooth proper morphism $X\to S$ whose geometric fibers are connected curves of geometric genus $1$. For every $k$-scheme $S$, consider the groupoid of triples $(f\colon X\to S, \LL, \varphi)$, where $f$ is a smooth genus $1$ curve over $S$, $\LL$ is an invertible sheaf of degree $n$ on $X$, and $\varphi\colon \bP(f_*\LL)\to \bP^{n-1}_S$ is an isomorphism. By definition, an isomorphism from a triple $(f\colon X\to S,\LL,\varphi)$ to a triple $(f'\colon X'\to S,\LL',\varphi')$ is a pair $(\alpha,\beta)$, where $\alpha\colon X\xrightarrow{\sim}X'$ is an isomorphism of $S$-schemes and $\beta\colon\LL\xrightarrow{\sim}\alpha^*\LL'$ is an isomorphism of invertible sheaves, such that we have a commutative triangle of isomorphisms
\[
\begin{tikzcd}
\bP(f_*\LL) \arrow[rr,"\sim"] \arrow[dr,"\varphi"'] &&
\bP(f'_*\LL') \arrow[dl,"\varphi'"] \\
& \bP^{n-1}_S
\end{tikzcd}
\]
where the top horizontal map is induced by $\beta$.

\begin{lemma}\label{lem:vn-points-2}
For every $k$-scheme $S$ we have a functorial identification
\begin{align*}
V(n)(S)=\{\text{triples $(f\colon X\to S, \LL, \varphi)$, where $f$ is a smooth genus $1$ curve, $\LL$ is an invertible sheaf}\\ \text{of degree $n$ on $X$, and $\varphi\colon \bP(f_*\LL)\xrightarrow{\sim} \bP^{n-1}_S$ is an isomorphism over $S$}\}/\mathrm{isom}.\nonumber
\end{align*}
\end{lemma}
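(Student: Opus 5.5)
We will construct mutually inverse maps between the set $V(n)(S)$ described in \eqref{vn-points} and the set of isomorphism classes of triples, check that they are functorial in $S$, and verify that the composites are identities.

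\emph{From subschemes to triples.} Given $X\subset \bP^{n-1}_S$ in $V(n)(S)$, let $f\colon X\to S$ be the projection and $\LL\coloneqq \OO(1)|_X$. Then $f$ is smooth and proper with geometric fibers that are connected genus $1$ curves, and $\LL$ has degree $n$ on fibers. The restriction map $\OO_S^{\oplus n}=H^0(\bP^{n-1}_S,\OO(1))\to f_*\LL$ needs to be an isomorphism. Here we use cohomology and base change: $H^1(X_s,\LL_s)=0$ since $\deg \LL_s=n>0$ on a genus $1$ curve. Hence $f_*\LL$ is locally free of rank $n$ and commutes with base change. On each fiber the map $k(s)^n\to H^0(X_s,\LL_s)$ is injective because $X_s$ lies in no hyperplane, so it is bijective by a dimension count. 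It is therefore an isomorphism by Nakayama. This isomorphism induces $\varphi\colon \bP(f_*\LL)\xrightarrow{\sim}\bP^{n-1}_S$, compatible with the embedding, under the quotient convention for $\bP(\VV)$.

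\emph{From triples to subschemes.} Given $(f\colon X\to S,\LL,\varphi)$, the same base change argument shows that $f_*\LL$ is locally free of rank $n$ with formation commuting with base change. Since $n\ge 3$, $\LL$ is relatively very ample on fibers (degree $\ge 3$ on genus $1$), and $f^*f_*\LL\to\LL$ is surjective because fiberwise global generation holds and base change applies. This gives a morphism $\iota\colon X\to\bP(f_*\LL)$, which is a closed immersion since it is one on every fiber and $X$ is proper and flat over $S$ (apply the fiberwise criterion for closed immersions of proper flat schemes). Then $\varphi\circ\iota$ realizes $X$ as an $S$-smooth closed subscheme of $\bP^{n-1}_S$. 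Its fibers are embedded by complete linear systems of degree $n$, hence are twisted elliptic normal curves. An isomorphism of triples $(\alpha,\beta)$ makes the images coincide, by the commutative triangle in the definition. So the construction is well defined on isomorphism classes, and it is visibly compatible with base change.

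\emph{Bijectivity.} Starting from a subscheme, the triple it produces recovers the same subscheme by construction. Conversely, starting from a triple, let $X'$ be the image subscheme with its triple $(X',\OO(1)|_{X'},\varphi')$. Take $\alpha=\varphi\circ\iota\colon X\xrightarrow{\sim} X'$, and let $\beta\colon\LL\cong \iota^*\OO_{\bP(f_*\LL)}(1)\cong\alpha^*\OO(1)$ be the tautological isomorphism. One checks that the triangle commutes: the map $f_*\LL\to f'_*\OO(1)|_{X'}$ induced by $\beta$ is precisely the identification via $\varphi$.

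The main point requiring care is the base change step: ensuring $f_*\LL$ is locally free of rank $n$ and commutes with arbitrary base change over a non-reduced or non-noetherian $S$, and deducing that $\iota$ is a closed immersion. Both follow from the vanishing of $H^1$ on fibers together with standard results (EGA III cohomology and base change, after reducing to noetherian $S$ by finite presentation).
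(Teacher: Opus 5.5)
Your two constructions (restrict $\OO(1)$ and use cohomology and base change; conversely, embed by the relative complete linear system and compose with $\varphi$) are the paper's maps $\Phi$ and $\Psi$, and those parts are fine. The gap is in the step the paper leaves to the reader, $\Phi\circ\Psi\cong\mathrm{id}$. There you invoke a ``tautological isomorphism'' $\iota^*\OO_{\bP(f_*\LL)}(1)\cong\alpha^*\OO(1)=\iota^*\varphi^*\OO_{\bP^{n-1}_S}(1)$, and no such isomorphism exists in general. An $S$-isomorphism $\varphi\colon\bP(f_*\LL)\to\bP^{n-1}_S$ only satisfies $\varphi^*\OO(1)\cong\OO_{\bP(f_*\LL)}(1)\otimes\pi^*M$ for some $M\in\Pic(S)$. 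Equivalently, $\varphi$ is induced by an isomorphism $f_*\LL\otimes M\cong\OO_S^{\oplus n}$, not by one of $f_*\LL$ itself. So you only get $\alpha^*(\OO(1)|_{X'})\cong\LL\otimes f^*M$.

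This cannot be repaired within the stated notion of isomorphism. Take $S$ with $\Pic(S)\neq0$, a triple $(X,\LL,\varphi)$, and a nontrivial $M\in\Pic(S)$. Then $(X,\LL\otimes f^*M,\varphi)$, read through $\bP(f_*\LL\otimes M)=\bP(f_*\LL)$, has the same embedding $e\colon X\hookrightarrow\bP^{n-1}_S$.

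Suppose $(\alpha,\beta)$ is an isomorphism between these two triples. The triangle, together with naturality of the complete linear system, forces $e\circ\alpha=e$, so $\alpha=\mathrm{id}$ because closed immersions are monomorphisms. Then $\beta$ gives $f^*M\cong\OO_X$, whence $M\cong f_*f^*M\cong f_*\OO_X=\OO_S$, a contradiction. So with $\beta\colon\LL\xrightarrow{\sim}\alpha^*\LL'$, the map from isomorphism classes of triples to $V(n)(S)$ is surjective but not injective whenever $\Pic(S)\neq0$.

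The fix is to allow $\beta\colon\LL\xrightarrow{\sim}\alpha^*\LL'\otimes f^*N$ with $N\in\Pic(S)$, reading the triangle through $\bP(\VV\otimes N)=\bP(\VV)$; that is, remember $\LL$ only modulo $f^*\Pic(S)$. Alternatively, restrict to $S$ with $\Pic(S)=0$. The twisted convention matches how the lemma is actually used in the proof of Proposition~\ref{prop:pgl-homogeneous}, where the factor $p^*M$ appears. With it your argument goes through, taking $\beta$ to be the isomorphism $\LL\cong\alpha^*(\OO(1)|_{X'})\otimes f^*M^{-1}$.
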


\begin{proof}
    We let $\sF_n$ be the contravariant functor from $k$-schemes to sets which to a $k$-scheme $S$ associates the collection of isomorphism classes of triples $(f\colon X\to S, \LL, \varphi)$ as above.

    Let $X\subset \mathbf P^{n-1}_S$ be an $S$-point of $V(n)$, let
    $f\colon X\to S$ be the structure morphism, and set $\LL=\OO_{\mathbf P^{n-1}_S}(1)|_X$. For all $s\in S$, since every geometric
    fiber of $f$ is a genus $1$ curve and $\LL|_{X_s}$ has degree $n\geq 3$, we have $H^1(X_s,\LL_s)=0$. Hence $f_*\LL$ is locally free of rank $n$, and its formation commutes with base change. Moreover, for every $s\in S$, the restriction map $H^0(\mathbf P^{n-1}_{k(s)},\OO(1))\to H^0(X_s,\LL_s)$ is injective because $X_s$ is not contained in a hyperplane. Since both
    sides have dimension $n$, this map is an isomorphism. It follows from the
    base change theorem that the natural map $\OO_S^{\oplus n}\to f_*\LL$ induced by the coordinate functions on $\mathbf P^{n-1}_S$ is an isomorphism, and hence  determines an isomorphism $\varphi\colon \mathbf P(f_*\LL)\xrightarrow{\sim}
        \mathbf P^{n-1}_S$. This defines a natural transformation $\Phi\colon V(n)\to \sF_n$.

    Conversely, let $(f\colon X\to S,\LL,\varphi)\in \sF_n(S)$.
    Since $\LL_s$ has degree $n\geq 3$ on the genus $1$ curve $X_s$,
    it is very ample and satisfies $H^1(X_s,\LL_s)=0$ for every
    geometric point $s$ of $S$. Hence $f_*\LL$ is locally free of rank~$n$, its formation commutes with base change, and the relative complete
    linear system defines a morphism $X\to \mathbf P(f_*\LL)$. This morphism is a closed immersion because it is so on all fibers.
    Composing it with $\varphi$ gives an $S$-flat closed subscheme
    $X\subset \mathbf P^{n-1}_S$ whose fibers are smooth genus $1$
    curves of degree $n$ not contained in any hyperplane. Therefore $X\subset \mathbf P^{n-1}_S$ defines an $S$-point of $V(n)$. This gives a natural transformation $\Psi\colon \sF_n\to V(n)$.

    We leave it to the reader to check that $\Phi$ and $\Psi$ are
    quasi-inverse to each other.
\end{proof}

		 Taking the Jacobian of the universal curve over $V(n)$ yields a morphism 
        \begin{equation}\label{eq:J}J\colon V(n)\longrightarrow\sM_{1,1}.
        \end{equation}
        Under the identification of Lemma~\ref{lem:vn-points-2}, if an $S$-point of $V(n)$ is
represented by a triple $(f\colon X\to S,\LL,\varphi)$, then its image
under $J$ is isomorphic to the elliptic curve $\Pic^0_{X/S}\to S$. The $k$-group $\PGL_n$ acts on $\Hilb(n)$ and $V(n)$ via its action on $\bP^{n-1}_k$, and $J$ is $\PGL_n$-invariant. Since $n\geq 3$, the invertible sheaf $\OO(n\Sigma)$  on $\sE$ (cf. \eqref{eq:universal-elliptic}) is relatively very ample, and hence it defines a closed immersion over $\sM_{1,1}$
\begin{equation}\label{eq:iota_n}
\iota_n\colon
\sE\lhook\joinrel\longrightarrow
\bP_{\sE,n},\qquad \bP_{\sE,n}\coloneqq \bP(\pi_*\OO(n\Sigma)).
\end{equation}
For every morphism $T\to \sM_{1,1}$ and every
$P\in \sE[n](T)$, translation by $P$ on $\sE_T$ preserves
$\OO(n\Sigma)_T$ up to isomorphism. Hence it induces a well-defined
projective automorphism of $(\bP_{\sE,n})_T$. Thus we obtain a homomorphism of relative group schemes over $\sM_{1,1}$
\begin{equation}\label{eq:h_n}
h_n\colon
\sE[n]\longrightarrow
\Aut_{\sM_{1,1}}(\bP_{\sE,n}).
\end{equation}
By construction, $\iota_n$ is equivariant with respect to $h_n$, that is, we have a commutative  diagram
\begin{equation}\label{eq:iota-h}
\begin{tikzcd}
\sE[n]\times_{\sM_{1,1}}\sE
    \arrow[r,"\tau"]
    \arrow[d,"h_n\times \iota_n"'] &
\sE
    \arrow[d,"\iota_n"] \\
\Aut_{\sM_{1,1}}(\bP_{\sE,n})
\times_{\sM_{1,1}}
\bP_{\sE,n}
    \arrow[r] &
\bP_{\sE,n},
\end{tikzcd}
\end{equation}
where $\tau$ is the translation action of $\sE[n]$ on $\sE$ and where the bottom arrow is the tautological action. 

\begin{lemma}\label{lem:h_n-closed}
    Let $k$ be a field, and let $n\geq 3$ be an integer. The morphism $h_n$ of \eqref{eq:h_n} is a closed immersion.
\end{lemma}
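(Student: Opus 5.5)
The plan is to reduce to a statement about finite group schemes over a base scheme and then check it fiberwise. Both $\sE[n]$ and $\Aut_{\sM_{1,1}}(\bP_{\sE,n})$ are group schemes over $\sM_{1,1}$. The first is finite étale when $\mathrm{char}(k)\nmid n$, and finite flat in general. The second is a form of $\PGL_n$, hence affine and separated. Being a closed immersion is fpqc-local on the base, so I would pull back along a smooth (or étale) atlas $T\to\sM_{1,1}$ with $T$ a scheme. Concretely, I would take $T=X$ from the presentation $[X/\Gm]$ in Lemma~\ref{lem:m11}, or any scheme over which the elliptic curve is given. It then suffices to prove: for an elliptic curve $E\to T$ over a $k$-scheme $T$, the homomorphism $h\colon E[n]\to \PGL(\pi_*\OO(n\Sigma))$ is a closed immersion.

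Since $E[n]$ is finite over $T$ and the target is separated over $T$, $h$ is proper. A proper monomorphism is a closed immersion, so the task becomes showing that $h$ is a monomorphism. For a homomorphism of group schemes this means the kernel $\ker(h)$ is the trivial group scheme. Because $E[n]$ is finite and of finite presentation, $\ker(h)\to T$ is finite. By Nakayama, it is enough to show that its fibers over all points $t\in T$ are trivial, i.e.\ that $\ker(h)_{\bar t}=\Spec \kappa(\bar t)$ scheme-theoretically. This reduces everything to an elliptic curve $E$ over an algebraically closed field $K$.

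Over $K$, I would argue functorially on points valued in Artin local $K$-algebras $R$. Let $P\in E[n](R)$ with $h(P)=\mathrm{id}$. By the equivariance diagram \eqref{eq:iota-h}, $\iota_n\circ\tau_P=h(P)\circ\iota_n=\iota_n$. Since $\iota_n$ is a closed immersion, hence a monomorphism, we get $\tau_P=\mathrm{id}$ on $E_R$. Applying this to the zero section gives $P=0$. Hence the kernel is trivial as a functor on all $K$-algebras, and in fact the argument already works over arbitrary $T$ directly. This suggests skipping the fiberwise reduction entirely: $h(P)=1$ implies $\iota_n\circ\tau_P=\iota_n$ for any $T'$-point $P$, so $\tau_P=\mathrm{id}$ and $P=\tau_P(0)=0$.

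The main subtlety is that $h$ is only defined through a projective automorphism induced by translation, which is well defined only up to the choice of isomorphism $\tau_P^*\OO(n\Sigma)\cong\OO(n\Sigma)$. I need to make sure that $h(P)$ really satisfies the commutativity \eqref{eq:iota-h} over arbitrary bases $T'$, including non-reduced ones and ones where $n$ is not invertible. The paper asserts this "by construction", and I would invoke it. The remaining points are the standard facts that proper monomorphisms are closed immersions (EGA IV 18.12.6) and that closed immersions descend along fpqc covers of stacks, which are routine.
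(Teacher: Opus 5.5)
Your argument is correct and is essentially the paper's proof: $h_n$ is proper because $\sE[n]$ is finite and the automorphism group scheme is separated, and it is a monomorphism because any $P$ in the kernel satisfies $\iota_n\circ\tau_P=\iota_n$, which forces $\tau_P=\mathrm{id}$ and hence $P=\tau_P(0)=0$. As you observe yourself, the preliminary reductions to an atlas and to geometric fibers via Nakayama are unnecessary, since the kernel computation works directly on $T$-points for an arbitrary $T\to\sM_{1,1}$.
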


\begin{proof}
It is enough to show that $h_n$ is
a proper monomorphism. The morphism $h_n$ is proper because
$\sE[n]\to \sM_{1,1}$ is finite and $\Aut_{\sM_{1,1}}(\bP_{\sE,n})\to \sM_{1,1}$ is separated. To show that $h_n$ is a monomorphism, it suffices to show that
its kernel is trivial. Let $T$ be a $k$-scheme, let $T\to\sM_{1,1}$ be a morphism, and let
$P\in\sE[n](T)$ lie in the kernel of $h_n$. Then the projective
automorphism $h_n(P)$ is the identity. By \eqref{eq:iota-h}, translation
by $P$ acts trivially on $\sE_T$. Evaluating at the zero section gives
$P=0$. Thus $h_n$ is a monomorphism.
\end{proof}

Set
\[
\sR_n
\coloneqq
\operatorname{Isom}_{\sM_{1,1}}(\bP_{\sE,n},
\mathbf P^{n-1}_{\sM_{1,1}}
)\xlongrightarrow{q}\sM_{1,1}.
\]
Then $q$ is a bitorsor for $\operatorname{Aut}_{\sM_{1,1}}(\bP_{\sE,n})$ and $\PGL_n$: the first group acts on the right by precomposition, and
$\PGL_n$ acts on the left by postcomposition. Via $h_n$, this gives a right
action of $\sE[n]$ on $\sR_n$, and the left action of $\PGL_n$
commutes with it. 

For a morphism $T\to\sM_{1,1}$ and an isomorphism $r\colon
(\bP_{\sE,n})_T
\xrightarrow{\sim}
\mathbf P^{n-1}_T$, define $\Theta(r)$ to be the embedded curve $r\circ \iota_{n,T}\colon \sE_T
\hookrightarrow
(\bP_{\sE,n})_T
\xlongrightarrow{\sim}
\mathbf P^{n-1}_T$. 
This gives a family of twisted elliptic normal curves over $T$, i.e., a $T$-point of $V(n)$, and its
Jacobian is canonically $\sE_T\to T$. 
We obtain a morphism of $\sM_{1,1}$-schemes 
\[
\Theta\colon \sR_n\longrightarrow V(n)
\]
i.e., fitting into a commutative diagram
\begin{equation}\label{eq:rn-vn-m11}
\begin{tikzcd}
\sR_n \arrow[rr,"\Theta"] \arrow[dr,"q"'] & & V(n) \arrow[dl,"J"] \\
& \sM_{1,1}. &
\end{tikzcd}
\end{equation}

The following is a crucial geometric observation.

\begin{proposition}\label{prop:pgl-homogeneous}
Let $k$ be a field, and let $n\geq 3$ be an integer. The morphism $\Theta$ is equivariant for the left action of $\PGL_n$ and is a fppf torsor for the right action of $J^*\sE[n]$ on $\sR_n$. In
particular, if $n$ is invertible in~$k$, then $\Theta$ is étale, $J$ is
smooth, and the $k$-variety $V(n)$ is smooth and geometrically integral.
\end{proposition}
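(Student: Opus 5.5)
We first check that $\Theta$ is $\PGL_n$-equivariant and $\sE[n]$-invariant. For $g\in\PGL_n(T)$ we have $\Theta(g\circ r)=g\circ r\circ\iota_{n,T}$, and this is $g$ applied to the embedded curve $\Theta(r)$. For $P\in\sE[n](T)$, the diagram \eqref{eq:iota-h} gives $r\circ h_n(P)\circ\iota_{n,T}=r\circ\iota_{n,T}\circ\tau_P$. Since $\tau_P$ is an automorphism of $\sE_T$, the image subscheme of $\mathbf P^{n-1}_T$ is unchanged, so $\Theta(r\cdot P)=\Theta(r)$.

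The main step is to show that the induced map $\sR_n\times_{\sM_{1,1}}J^*\sE[n]\to\sR_n\times_{V(n)}\sR_n$ is an isomorphism and that $\Theta$ is fppf-locally surjective. We identify $V(n)$ with triples via Lemma~\ref{lem:vn-points-2} and work over a test scheme $T$.

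For local surjectivity, take a $T$-point $(f\colon X\to T,\LL,\varphi)$. After an fppf (in fact étale, once the section is chosen) cover we may choose a section $e$ of $f$; then $(X,e)$ is an elliptic curve $E$ with $\Pic^0_{X/T}\cong E$. Since $\LL\otimes\OO(-ne)$ has degree $0$, it corresponds to a point of $E$. After a further fppf cover ($[n]$ is finite flat and surjective) we can write it as $nQ$, so translating by $Q$ gives $\tau_Q^*\LL\cong\OO(ne)$ up to a line bundle from $T$. This identifies $\bP(f_*\LL)\cong(\bP_{\sE,n})_T$ compatibly with the embeddings, and composing with $\varphi$ yields $r\in\sR_n(T)$ with $\Theta(r)=X$.

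For the fiber product, suppose $\Theta(r)=\Theta(r')$ for $r,r'$ over $T\to\sM_{1,1}$, lying over elliptic curves $E,E'$. The equal images give an isomorphism $\alpha\colon E\to E'$ of genus $1$ curves with $r'\circ\iota'\circ\alpha=r\circ\iota$. Write $\alpha=\tau_P\circ\alpha_0$ with $\alpha_0$ a group isomorphism. Because $\alpha^*\OO(ne')\cong\OO(ne)$ modulo pullbacks from $T$, we get $nP=0$, so $P\in E'[n]$. The pair $(r,r')$ is then identified with $(r'\cdot h_n(P),r')$ after identifying $E$ with $E'$ via $\alpha_0$, which is a $2$-morphism in $\sM_{1,1}$. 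Uniqueness of $P$ follows from Lemma~\ref{lem:h_n-closed}, since $h_n$ is a monomorphism.

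We expect the main obstacle to be the stacky bookkeeping: $\sR_n$ is an algebraic space over the stack $\sM_{1,1}$, and the automorphism $-1$, or extra automorphisms, must be absorbed into the $\sM_{1,1}$-structure rather than into $\sE[n]$. We would handle this by noting that $\sR_n\to V(n)$ factors through $V(n)\times_{\sM_{1,1}}$ data compatibly with \eqref{eq:rn-vn-m11}, and check the torsor property on $T$-points with the chosen isomorphism $J(\Theta(r))\cong E$ built in.

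For the consequences, assume $n$ is invertible in $k$. Then $\sE[n]$ is finite étale over $\sM_{1,1}$, so its pullback is too, and an fppf torsor under a finite étale group is étale; hence $\Theta$ is étale. The map $q$ is a $\PGL_n$-torsor, so $q$ is smooth, and $\sM_{1,1}$ is smooth by the result of Olsson. Since $J\circ\Theta=q$ is smooth and $\Theta$ is étale and surjective, $J$ is smooth by descent of smoothness along étale covers. Therefore $V(n)$ is smooth over $k$. For geometric integrality, $\sM_{1,1,\kbar}$ is irreducible (it is $[X/\Gm]$ with $X$ open in $\bA^2$) and $\PGL_n$ is connected. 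Hence $\sR_{n,\kbar}$ is irreducible, and so is its surjective image $V(n)_{\kbar}$; combined with smoothness, this shows $V(n)$ is geometrically integral.
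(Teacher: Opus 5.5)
Your proposal is correct and follows essentially the same route as the paper: invariance via the square \eqref{eq:iota-h}, fppf-local lifts by choosing a section and translating by an $n$-division point, simple transitivity by writing the comparison isomorphism as a translation composed with a group isomorphism and using the line-bundle comparison to force $nP=0$, and then étaleness and smoothness from $q=J\circ\Theta$. You also spell out the smoothness and geometric integrality of $V(n)$, which the paper leaves implicit; there are two small fixes: uniqueness of $P$ should be derived from the uniqueness of $\alpha$ and of its decomposition $\alpha=\tau_P\circ\alpha_0$ with $P=\alpha(0)$ (a competing identification could a priori use a different isomorphism $E\cong E'$, so ``$h_n$ is a monomorphism'' alone does not suffice), and your presentation $[X/\Gm]$ for the irreducibility of $\sM_{1,1,\kbar}$ requires $\mathrm{char}(k)\notin\{2,3\}$, although irreducibility holds in general.
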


\begin{proof}
We first show that $\Theta$ is invariant under the right action of $J^* \sE[n]$, or equivalently, by the commutativity of \eqref{eq:rn-vn-m11}, of
$\sE[n]$. Let $T\to\sM_{1,1}$ be a morphism, let
$P\in\sE[n](T)$, and let $r\colon
(\bP_{\sE,n})_T
\xrightarrow{\sim}
\mathbf P^{n-1}_T$
be a $T$-point of $\sR_n$. By \eqref{eq:iota-h}, we have 
\[
h_n(P)\circ \iota_{n,T}
=
\iota_{n,T}\circ \tau_P,
\]
where we write $\tau_P$ for translation by $P$. Thus we have
\[
\Theta(r \circ h_n(P)) = r \circ h_n(P) \circ \iota_{n,T} = r \circ \iota_{n,T} \circ \tau_P = \Theta(r) \circ \tau_P
\]
so the embeddings associated to $r$ and to $r\circ h_n(P)$ differ by the
automorphism $\tau_P$ of $\sE_T$. They therefore define the same point
of $V(n)$. Thus $\Theta$ is $\sE[n]$-invariant.

It remains to prove that $\Theta$ is an fppf torsor under $J^*\sE[n]$.
Let $T\to V(n)$ be a morphism, corresponding to a family $X\subset \mathbf P^{n-1}_T$ of twisted elliptic normal curves, let $E\coloneqq \Pic^0_{X/T}$, and let $p\colon E\to T$ be the structure morphism.
Observe that $J^*\sE[n]$ pulls back to $E[n]$ over $T$, and that the zero section $\Sigma$ of $\sE$ pulls back to the zero section $\Sigma_E$ of $E$. We show that the $E[n]$-action on
\[
\sR_n\times_{V(n)}T\longrightarrow T
\]
makes it into an fppf torsor under $E[n]$. This assertion is fppf-local on $T$. After an fppf base change, we may choose
a section of $p \colon X\to T$, hence identify $X$ with its Jacobian $E$. Under such an
identification, $\OO_{\mathbf P^{n-1}_T}(1)|_X$ becomes an invertible sheaf of relative degree $n$ on $E$. Since the $T$-morphism
\begin{align*}
E &{} \longrightarrow \Pic^n_{E/T} \\
P &{} \longmapsto \tau_P^*\OO(n\Sigma_E)
\end{align*}
is an fppf torsor under $E[n]$, fppf-locally on $T$ there exist
$P\in E(T)$, an invertible sheaf $M$ on $T$ and an isomorphism
\[
\OO_{\mathbf P^{n-1}_T}(1)|_X
\cong
\tau_P^*\OO(n\Sigma_E)\otimes p^*M.
\]
After composing the
chosen identification $E\cong X$ with $\tau_{-P}$, we may assume that $P=0$, that is,
\[
\OO_{\mathbf P^{n-1}_T}(1)|_X
\cong\OO(n\Sigma_E)\otimes p^*M
\]
for some invertible sheaf $M$ on $T$. The associated complete linear systems then give an isomorphism
\[
\mathbf P(p_*\OO(n\Sigma_E))
\xlongrightarrow{\sim}
\mathbf P^{n-1}_T,
\]
and hence a lift of the given $T$-point of $V(n)$ to $\sR_n$. Therefore
$\sR_n\times_{V(n)}T\to T$ has sections fppf-locally on $T$.

Now suppose that two lifts over $T$ are given. In particular, they give two isomorphisms $\alpha,\beta\colon E\xrightarrow{\sim}X$ compatible with the identification of $E$ with the Jacobian of $X$, and such
that
\begin{equation}\label{eq:alpha-beta-isom}
\alpha^*\OO_{\mathbf P^{n-1}_T}(1)|_X
\cong
\OO(n\Sigma_E)\otimes p^*M,
\qquad
\beta^*\OO_{\mathbf P^{n-1}_T}(1)|_X
\cong
\OO(n\Sigma_E)\otimes p^*M'
\end{equation}
for some invertible sheaves $M,M'$ on $T$. Since $\alpha$ and $\beta$ induce
the same identification on Jacobians, there is a \emph{unique} section
$P\in E(T)$ such that $\beta=\alpha\circ\tau_P$. The two isomorphisms of \eqref{eq:alpha-beta-isom} then give $\tau_P^*\OO(n\Sigma_E)
\cong
\OO(n\Sigma_E)\otimes p^*N$ 
for some invertible sheaf $N$ on $T$. Equivalently, $P$ stabilizes the class of
$\OO(n\Sigma_E)$ in $\Pic^n_{E/T}$. The stabilizer of this class is
$E[n]$: indeed, under the identification $E\cong \Pic^0_{E/T}$ determined by the origin of $E$,  the difference
$\tau_P^*\OO(n\Sigma_E)\otimes\OO(n\Sigma_E)^{-1}$
corresponds to $nP$. Thus $P\in E[n](T)$. Thus the $E[n](T)$-action on the set of lifts is simply transitive whenever the latter is non-empty. In conclusion, we have proved that for every $T\to V(n)$ the pullback $\sR_n\times_{V(n)}T\to T$ is locally non-empty for the fppf topology and that $E[n]=(J^*\sE[n])|_T$ acts simply
transitively on its sections. Hence it is an fppf torsor under $E[n]$.

If $n$ is invertible in $k$, then $\sE[n]\to\sM_{1,1}$ is finite
étale, hence $\Theta$ is étale. Since $q\colon\sR_n\to\sM_{1,1}$ is a
$\PGL_n$-torsor, it is smooth. The identity $q=J\circ\Theta$, together with
the fact that $\Theta$ is étale and surjective, implies that $J$ is smooth.
\end{proof}


\subsection{The homogeneous spaces \texorpdfstring{$V_E^A$}{VEA} and \texorpdfstring{$V_j^A$}{VjA}}\label{twisted-case}
		
Let $k$ be a field, let $n\geq 3$ be an integer invertible in $k$, let $A$ be a central simple algebra of degree $n$ over $k$. 
We write $\GL(A)$ for the smooth affine $k$-group defined by $\GL(A)(R)=(A\otimes_k R)^\times$ for every $k$-algebra $R$. The reduced norm defines a morphism of algebraic
groups $\operatorname{Nrd}\colon \GL(A)\to \Gm$. We set
\[
\SL(A)\coloneqq \ker(\operatorname{Nrd}),\qquad \PGL(A)\coloneqq \GL(A)/\Gm,
\]
where $\Gm$ is embedded in $\GL(A)$ by scalar multiplication. We have a commutative diagram of $k$-groups with exact rows
\begin{equation}\label{eq:mun-sla-pgla}
\begin{tikzcd}
    1\arrow[r] & \mu_n \arrow[r]\arrow[d,hook]  &\SL(A) \arrow[r] \arrow[d,hook]  & \PGL(A)\arrow[r] \arrow[d,equal]  & 1 \\
    1\arrow[r] & \Gm \arrow[r] &\GL(A) \arrow[r] & \PGL(A)\arrow[r] & 1.
\end{tikzcd}
\end{equation}
        
Let $P^A$ be the right $\PGL_n$-torsor over $k$ corresponding to $A$. For a left quasi-projective $\PGL_n$-scheme $Y$ over $k$, we denote by $Y^A$ the twist of $Y$ by $P^A$, that is, 
\begin{equation}\label{eq:twist-by-pgl-torsor}
Y^A\coloneqq P^A\times^{\PGL_n}Y=(P^A\times_k Y)/\PGL_n,
\end{equation}
where $\PGL_n$ acts diagonally. In particular, $(\bP^{n-1}_k)^A=\SB(A)$ is the Severi--Brauer variety of $A$, and $(\PGL_n)^A=\PGL(A)$ is the automorphism $k$-group of $P^A$ and of $\SB(A)$.



Twisting the description in Lemma~\ref{lem:vn-points-2}, we see that $V(n)^A$ is isomorphic to the Hilbert scheme of twisted elliptic normal curves of degree $n$ in the Severi--Brauer variety $\SB(A)$, that is, for every $k$-scheme $S$, we have a functorial identification
\begin{align}\label{vna-points}
\begin{gathered}
V(n)^A(S)=\{\text{$S$-smooth closed subschemes $X\subset \SB(A)_S$ such that $X_s\subset \SB(A\otimes_kk(s))$}\\ \text{is a twisted elliptic normal curve for every $s\in S$}\}.
\end{gathered}
\end{align}
By Proposition~\ref{prop:pgl-homogeneous}, the $k$-variety $V(n)^A$ is smooth and geometrically integral.

Twisting \eqref{eq:rn-vn-m11} by $P^A$ yields a commutative triangle
\begin{equation}\label{eq:rn-vn-m11-a}
\begin{tikzcd}
\sR_n^A \arrow[rr,"\Theta^A"] \arrow[dr,"q^A"'] & & V(n)^A \arrow[dl,"J^A"] \\
& \sM_{1,1} &
\end{tikzcd}
\end{equation}
where $\sR_n^A\coloneqq (\sR_n)^A= \operatorname{Isom}_{\sM_{1,1}}(\bP_{\sE,n}, \SB(A)_{\sM_{1,1}})$. We define the composite morphism
		\begin{equation}\label{eq:morphism-f}f^A\colon  V(n)^A\xlongrightarrow{J^A} \sM_{1,1} \xlongrightarrow{\jj} \bA^1_k,
        \end{equation}
where $\jj$ was defined in \eqref{eq:j-invariant}.

\begin{proposition}\label{prop:pgl-homogeneous-a}
Let $k$ be a field, let $n\geq 3$ be an integer, and let $A$ be a central simple algebra of degree $n$ over $k$. The morphism $\Theta^A$ is equivariant for the left action of $\PGL(A)$ and is a torsor for the right action of $(J^A)^*\sE[n]$ on $\sR_n^A$. In particular, if $n$ is invertible in $k$, then $\Theta^A$ is étale and $J^A$ is
smooth.
\end{proposition}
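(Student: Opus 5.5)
The plan is to deduce Proposition~\ref{prop:pgl-homogeneous-a} from Proposition~\ref{prop:pgl-homogeneous} by twisting with the $\PGL_n$-torsor $P^A$. Twisting commutes with all the constructions involved, and every property in the statement (being a torsor under a finite flat group, being \'etale, being smooth) is fppf-local on the base. So it suffices to check compatibilities.

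First, I will make the twist precise. The diagram \eqref{eq:rn-vn-m11} lives over $\sM_{1,1}$, on which $\PGL_n$ acts trivially. The morphisms $\Theta$, $q$ and $J$ are $\PGL_n$-equivariant, with $J$ and $q$ invariant. Applying $P^A\times^{\PGL_n}(-)$ therefore gives \eqref{eq:rn-vn-m11-a}, and the left $\PGL_n$-actions become left $\PGL(A)$-actions. Equivariance of $\Theta^A$ for $\PGL(A)$ follows formally: $\PGL(A)=\Aut(P^A)$ acts on both twists through $P^A$, and $\Theta^A=\mathrm{id}\times^{\PGL_n}\Theta$.

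The right $\sE[n]$-action on $\sR_n$ commutes with the left $\PGL_n$-action. Since $\sE[n]$ lives over $\sM_{1,1}$ where $\PGL_n$ acts trivially, this right action descends to a right action of $\sE[n]$ on $\sR_n^A$ over $\sM_{1,1}$. Concretely, it is precomposition with $h_n$ on $\operatorname{Isom}_{\sM_{1,1}}(\bP_{\sE,n},\SB(A)_{\sM_{1,1}})$.

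Next, I will check the torsor property after base change along an fppf (even \'etale) cover $k'/k$ trivializing $P^A$. Over $k'$, fixing a trivialization $P^A_{k'}\cong \PGL_{n,k'}$ identifies $\sR^A_{n,k'}\to V(n)^A_{k'}$ with $\Theta_{k'}\colon\sR_{n,k'}\to V(n)_{k'}$. This identification is compatible with the right $\sE[n]$-actions and with the maps to $\sM_{1,1}$, so $(J^A)^*\sE[n]$ corresponds to $J^*\sE[n]$. By Proposition~\ref{prop:pgl-homogeneous}, $\Theta_{k'}$ is an fppf $J^*\sE[n]$-torsor.

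The action map $\sE[n]\times\sR^A_n\to\sR^A_n\times_{V(n)^A}\sR^A_n$ is a morphism defined over $k$ that becomes an isomorphism after the faithfully flat base change to $k'$. Hence it is already an isomorphism. Moreover, $\Theta^A$ is fppf surjective because it is so after base change. Together these show that $\Theta^A$ is an fppf torsor under $(J^A)^*\sE[n]$.

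The main point requiring care, though not a genuine obstacle, is confirming that the twisted action and the twisted structure map to $\sM_{1,1}$ really agree with the untwisted ones after trivialization. Both are defined through data on which $\PGL_n$ acts trivially: $h_n$ and $\iota_n$ live on $\bP_{\sE,n}$, not on $\bP^{n-1}$. So this should be a direct check on functors of points.

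Finally, assume $n$ is invertible in $k$. Then $\sE[n]\to\sM_{1,1}$ is finite \'etale, so the torsor $\Theta^A$ is \'etale. Also, $q^A$ is a twist of the $\PGL_n$-torsor $q$, so it is smooth, since smoothness descends along $k'/k$. Then $J^A\circ\Theta^A=q^A$ with $\Theta^A$ \'etale and surjective, so $J^A$ is smooth, by the same argument as at the end of Proposition~\ref{prop:pgl-homogeneous}. Alternatively, smoothness of $J^A$ can be read off directly from smoothness of $J_{k'}$ by fpqc descent.
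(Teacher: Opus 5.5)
Your proposal is correct and follows the paper's approach: the paper proves this in one line, by twisting Proposition~\ref{prop:pgl-homogeneous} by $P^A$. You make that twist explicit, checking the torsor, \'etaleness and smoothness properties after base change to a finite separable extension of $k$ that splits $P^A$, and then descending. That is exactly what the paper's one-line argument leaves implicit.
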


\begin{proof}
    This follows from Proposition~\ref{prop:pgl-homogeneous} by twisting by $P^A$. 
\end{proof}

 Suppose that $n\geq 3$ is invertible in $k$ and that $\mathrm{char}(k)\notin\{2,3\}$. By Proposition~\ref{prop:pgl-homogeneous-a} and Lemma~\ref{lem:m11}, $f^A$ is smooth over $\bA^1_k\smallsetminus\{0,1728\}$. Fix a field extension $K/k$, an element $j\in K$, and an elliptic curve $E$ over $K$ such that $\jj(E)=j$. We define the $K$-schemes $V_E^A$ and $V_j^A$ as the fiber products
        \[
        \begin{tikzcd}
        V_E^A \arrow[r] \arrow[d]  & V(n)^A \arrow[d,"J^A"] \\
        \mathrm{Spec}(K) \arrow[r,"E"] & \sM_{1,1},
        \end{tikzcd}
        \qquad\qquad 
        \begin{tikzcd}
        V_j^A\arrow[d]  \arrow[r]  & V(n)^A \arrow[d,"f^A"] \\
        \mathrm{Spec}(K) \arrow[r,"j"]  & \mathbf{A}^1_k
        \end{tikzcd}
        \]
        if $j\notin \{0,1728\}$, while if $j\in \{0,1728\}$ we define $V_E^A$ and $V_j^A$ as the maximal reduced closed subschemes of the fiber products above (cf. Lemma~\ref{lem:m11}). When $A$ is split, we simply write $V_E\coloneqq V_E^A$ and $V_j\coloneqq V_j^A$. Therefore, $V_j^A=(V_j)^A$ (resp. $V_E^A=(V_E)^A$) is the twist of $V_j$ by the $\PGL(A)$-torsor $P^A$. We have a commutative square
\begin{equation}\label{bigdiagram-twisted}
\begin{tikzcd}
V^A_E \arrow[r] \arrow[d]
  & V^A_{j} \arrow[r] \arrow[d]
  & V(n)^A \arrow[d,"J^A"] \arrow[dd,bend left=45,"f^A"] \\
\Spec(K) \arrow[r,"E"] \arrow[dr,equal]
  & B\Aut_{E/K} \arrow[r] \arrow[d]
  & \sM_{1,1} \arrow[d,"\jj"] \\
& \Spec(K) \arrow[r,"j"]
  & \bA^1_k .
\end{tikzcd}
\end{equation}
where the two squares on the top are Cartesian by definition, while the lower square is Cartesian if and only if $j\notin\{0,1728\}$. Observe also that the morphism $V_E^A\rightarrow V_j^A$ is an $\Aut_{E}$-torsor.

By \eqref{vna-points}, for every field extension $L/K$ we have the natural identifications
\begin{align}\label{vea-points}
			V_E^A(L)=\{\text{Pairs $(C,\psi)$, where $C\subset \SB(A_L)$ is a twisted elliptic normal curve}\\ \text{and $\psi\colon \Pic_{C/L}^0\xrightarrow{\sim} E_L$ is an $L$-group isomorphism}\},\nonumber
		\end{align}
        \begin{equation}\label{vja-points}
			V^A_j(L)=\{\text{Twisted elliptic normal curves $C\subset\SB(A_L)$ such that $\jj(\Pic^0_{C/L})=j$}\}.
		\end{equation}

Let $0_E\in E(K)$ be the origin, and let $\bP_{E,n}\coloneqq \bP(H^0(E,\OO(n\cdot 0_E)))$. Pulling back \eqref{eq:rn-vn-m11-a} along the morphism $\Spec(K)\to \sM_{1,1}$ determined by $E$ gives a $K$-morphism
\begin{equation}\label{eq:p-mod-en}
    \Theta_E^A\colon R_E^A\longrightarrow V_E^A,
\end{equation}
where $R_E^A=\mathrm{Isom}_K(\bP_{E,n},\SB(A)_K)$. A choice of basis of the $n$-dimensional $K$-vector space $H^0(E,\OO(n\cdot 0_E))$ determines an isomorphism $\bP_{E,n}\cong \bP_K^{n-1}$ and hence compatible isomorphisms $\mathrm{Aut}(\bP_{E,n})\cong \PGL_{n,K}$ and $R_E^A\cong (P^A)_K$.

\begin{proposition}\label{prop:pgla-stabilizer}
    Let $k$ be a field, let $n\geq 3$ be an integer invertible in $k$, and let $A$ be a central simple algebra of degree $n$ over $k$. Let $K/k$ be a field extension, let $E$ be an elliptic curve over $K$, let $0_E\in E(K)$ be the origin, and let $\bP_{E,n}\coloneqq \bP(H^0(E,\OO(n\cdot 0_E)))$. The morphism $\Theta_E^A$ of \eqref{eq:p-mod-en} is equivariant for the left action of $\PGL(A)_K$ and a torsor for the right action of $E[n]$ on $R_E^A=\operatorname{Isom}_K(\bP_{E,n},\SB(A)_K)$ induced by the inclusion $h_n\colon E[n]\hookrightarrow\mathrm{Aut}(\bP_{E,n})$ of \eqref{eq:h_n}.  In particular,
            \[V_E^A\cong R_E^A/E[n].\]
    Furthermore, if $A_K$ is split, then $V_E^A(K)\neq\emptyset$.
\end{proposition}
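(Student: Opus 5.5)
The plan is to obtain the proposition by base change from Proposition~\ref{prop:pgl-homogeneous-a}. By definition, $V_E^A$ is the fiber product of $J^A$ with $E\colon \Spec(K)\to\sM_{1,1}$; when $j=\jj(E)\in\{0,1728\}$ it is the reduction of that fiber product. The map $\Theta_E^A\colon R_E^A\to V_E^A$ is the pullback of $\Theta^A$ along $\Spec(K)\to\sM_{1,1}$, using the commutative triangle \eqref{eq:rn-vn-m11-a}. The identification $\sR_n^A\times_{\sM_{1,1}}\Spec(K)=\operatorname{Isom}_K(\bP_{E,n},\SB(A)_K)=R_E^A$ holds because $\bP_{\sE,n}$ pulls back to $\bP_{E,n}$; this uses that $\pi_*\OO(n\Sigma)$ commutes with base change, since $H^1$ vanishes.

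Both properties survive base change. Left $\PGL(A)$-equivariance is preserved since it is a statement about morphisms over $\sM_{1,1}$. The torsor property is also preserved: $\sR_n^A\to V(n)^A$ is an fppf torsor under $(J^A)^*\sE[n]$, so its pullback along $V_E^A\to V(n)^A$ is a torsor under the pullback group. Over $V_E^A$ that group is the constant pullback of $E[n]$, because $J^A$ restricted to $V_E^A$ factors through $E\colon\Spec(K)\to\sM_{1,1}$. The right action is the one induced by $h_n$ restricted to the fiber over $E$.

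In the case $j\in\{0,1728\}$, where we take reductions, one extra check is needed. $R_E^A$ is already reduced: it is a $\PGL_{n}$-torsor over a field, hence smooth. Therefore the torsor over the possibly nonreduced fiber product restricts to a torsor over its reduction, and the total space is still $R_E^A$. Since $n$ is invertible in $k$, $E[n]$ is finite étale, so $\Theta^A_E$ is a finite étale $E[n]$-torsor. Quotients by finite étale group actions on quasi-projective schemes exist, and a torsor is the quotient map, so $V_E^A\cong R_E^A/E[n]$.

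For the final claim, suppose $A_K$ is split. Then $\SB(A)_K\cong\bP^{n-1}_K$, and choosing a basis of $H^0(E,\OO(n\cdot 0_E))$ gives a $K$-isomorphism $\bP_{E,n}\cong \bP^{n-1}_K\cong\SB(A)_K$, which is a $K$-point $r$ of $R_E^A$. Its image $\Theta_E^A(r)\in V_E^A(K)$ is the curve $r\circ\iota_n(E)$ together with the canonical identification of its Jacobian with $E$. Concretely, this is $E$ embedded by $|n\cdot 0_E|$.

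The only delicate step is the reduction at $j\in\{0,1728\}$, namely making sure the right object is pulled back. I would handle it as above by working with the fiber product over $\Spec(K)\to \sM_{1,1}$ itself, before passing to $B\Aut_{E/K}$. At that level no reduction issue arises, since $R_E^A$ is smooth over $K$.
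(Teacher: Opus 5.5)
Your proposal is correct and follows the paper's route. The paper likewise obtains the torsor and equivariance statements by base change of Proposition~\ref{prop:pgl-homogeneous-a} along $E\colon\Spec(K)\to\sM_{1,1}$, and in the split case uses $\SB(A)_K\cong\bP^{n-1}_K\cong\bP_{E,n}$ to produce a $K$-point of $R_E^A$. Your extra check at $j\in\{0,1728\}$ is valid but in fact vacuous: $J^A$ is smooth, so the fiber product defining $V_E^A$ is already smooth and hence reduced, and the reduction in the definition only matters for $V_j^A$.
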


\begin{proof}
    The first part is an immediate consequence of Proposition~\ref{prop:pgl-homogeneous-a}. Furthermore, if $A_K$ is split, then $\SB(A)_K\cong \bP^{n-1}_K\cong \bP_{E,n}$, so that $R_E^A(K)\neq\emptyset$ and hence $V_E^A(K)\neq\emptyset$.
\end{proof}   

For the proofs of Theorems~\ref{local-global-thm} and \ref{weak-approximation-thm}, it will be necessary to study in detail the geometric stabilizer of $V_E^A$ viewed as a $\SL(A)$-homogeneous space; we will do this in \S\ref{subsec:geometric-stabilizers-detail}. For the proof of Theorems~\ref{maincor} and \ref{mainthm}, the following consequence of Proposition~\ref{prop:pgla-stabilizer} will suffice.
		
\begin{corollary}\label{solvable-stabilizers}
Let $k$ be a field of characteristic not equal to $2$ or $3$, let $n\geq 3$ be an integer invertible in~$k$, and let $A$ be a central simple algebra of degree $n$ over $k$.  Fix $j\in k$, and let $E$ be an elliptic curve over $k$ such that $\jj(E)=j$. Then $V_E^A$ and $V_{j}^A$ have the structure of $\SL(A)$-homogeneous spaces whose geometric stabilizers are solvable finite groups.
\end{corollary}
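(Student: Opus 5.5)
We work over $\kbar$ to identify stabilizers, and handle $V_E^A$ first, then deduce the statement for $V_j^A$ via the $\Aut_E$-torsor $V_E^A\to V_j^A$.

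\emph{The space $V_E^A$.} By Proposition~\ref{prop:pgla-stabilizer} (with $K=k$), $V_E^A\cong R_E^A/E[n]$. Here $R_E^A\cong P^A$ is a left $\PGL(A)$-torsor, and $E[n]$ acts on the right through $h_n\colon E[n]\hookrightarrow \Aut(\bP_{E,n})$. The left $\PGL(A)$-action on $R_E^A$ is simply transitive, so it descends to a transitive action on $V_E^A$. Over $\kbar$, choose a point $r\in R_E^A(\kbar)$ and use it to identify $\PGL(A)_{\kbar}$ with $\Aut(\bP_{E,n})_{\kbar}$ by $g\mapsto r^{-1}gr$. The stabilizer of the image of $r$ in $V_E^A$ then consists of those $g$ with $gr=r\,h_n(P)$ for some $P$, so it is $r\,h_n(E[n])\,r^{-1}\cong E[n]_{\kbar}\cong(\bZ/n)^2$. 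We let $\SL(A)$ act through the surjection $\SL(A)\to\PGL(A)$ of \eqref{eq:mun-sla-pgla}. The action stays transitive, since the surjection is surjective on $\kbar$-points ($\mu_n$ is étale because $n$ is invertible). The geometric stabilizer becomes the preimage $\widetilde{H}$ of $E[n]_{\kbar}$ in $\SL_n(\kbar)$, which sits in a central extension
\[
1\to\mu_n\to \widetilde{H}\to E[n]_{\kbar}\to 1.
\]
This is a finite group that is an extension of an abelian group by an abelian group, hence solvable (indeed nilpotent of class at most $2$; it is the Heisenberg group).

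\emph{The space $V_j^A$.} By Lemma~\ref{lem:m11}(3) and diagram \eqref{bigdiagram-twisted}, $V_j^A$ is the quotient of $V_E^A$ by the finite étale group $\Aut_{E/k}$. This group acts via automorphisms of $E$ that fix $0_E$. Such automorphisms preserve $\OO(n\cdot 0_E)$, so they act on $\bP_{E,n}$ and on $R_E^A$ by precomposition, and this action commutes with the left $\PGL(A)$-action. Concretely, $\Aut_E$ embeds into $\Aut(\bP_{E,n})$, it normalizes $h_n(E[n])$, and the group generated by the two is $E[n]\rtimes\Aut_E$. We get $V_j^A\cong R_E^A/(E[n]\rtimes \Aut_E)$. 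The one point to check is that this embedding is injective, which holds because an automorphism fixing $0_E$ that acts trivially on $\bP_{E,n}$ fixes $E$ pointwise. The same argument as above then shows that the geometric stabilizer under $\SL(A)$ is a central $\mu_n$-extension of $E[n]\rtimes\Aut_{E,\kbar}$. Since $\Aut_{E,\kbar}$ is cyclic of order $2,4$ or $6$ (the characteristic is not $2$ or $3$), this extension is solvable: it is built from $\mu_n$, the abelian group $E[n]$, and a cyclic group.

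\emph{Main obstacle.} The delicate step is identifying $V_j^A$, as a scheme and not only on points, with the quotient by $E[n]\rtimes\Aut_E$. This is especially so at $j\in\{0,1728\}$, where $V_j^A$ was defined as a reduced subscheme. To handle it, I would check transitivity and compute the stabilizer on $\kbar$-points, using \eqref{vja-points}: two elliptic normal curves in $\bP^{n-1}_{\kbar}$ with the same $j$-invariant are isomorphic, and any isomorphism between them can be adjusted by a translation and preserves the degree-$n$ class up to a translation. This shows $V_j^A$ is a single $\PGL(A)$-orbit, since it is reduced and has one geometric orbit. Smoothness then follows from the orbit structure, because a reduced homogeneous space in characteristic zero, or with étale stabilizers, is automatically the quotient.
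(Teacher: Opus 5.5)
Your proposal is correct and follows essentially the paper's route: Proposition~\ref{prop:pgla-stabilizer} gives $V_E^A\cong R_E^A/E[n]$, whose $\SL(A)$-stabilizer is a central $\mu_n$-extension of $E[n]$, and the $\Aut_{E/k}$-torsor $V_E^A\to V_j^A$ makes the stabilizers for $V_j^A$ extensions by the solvable group $\Aut_{E/k}$. Your ``main obstacle'' is already settled by that torsor statement, which comes from Lemma~\ref{lem:m11}(3) and the smoothness of $J^A$, including at $j\in\{0,1728\}$; so the orbit and reducedness argument is not needed.
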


\begin{proof}
    The finite \'etale $k$-group $\Aut_{E/k}$ is solvable (and even cyclic, unless $j=0$ and $\mathrm{char}(k)\in\{ 2,3\}$). The conclusion follows from  Proposition~\ref{prop:pgla-stabilizer} and the fact that the map $V_E^A\to V_j^A$ appearing in \eqref{bigdiagram-twisted} is an $\Aut_{E/k}$-torsor.
\end{proof}

\subsection{Degenerations of twisted elliptic normal curves inside cyclic Severi--Brauer varieties}
		
The following proposition is folklore (apparently going back to Michael Artin) and was known to
experts; see for example Jason Starr's answer to~\cite{starr_g1c_mo} or \cite[Proof of Proposition 3.2, Remark 3.3]{mackall:enc}.
		
\begin{proposition}\label{nodal}
Let $k$ be a field and let $A$ be a cyclic central simple algebra of degree $n\geq 3$ over $k$. Then the closure of $V(n)^A$ in $\Hilb(n)^A$ contains a smooth $k$-point.
\end{proposition}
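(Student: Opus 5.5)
We plan to realize Artin's $n$-gon as a $k$-point of $\Hilb(n)^A$, show that it is a smooth point of $\Hilb(n)^A$, and show that it lies in the closure of $V(n)^A$.

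\textbf{Construction of the $n$-gon.} Write $A=(L/k,\sigma,b)$ with $L/k$ cyclic of degree $n$ (a cyclic étale algebra if needed), $\sigma$ a generator and $b\in k^\times$. Over $L$ we have $A_L\cong M_n(L)$, and the descent datum on $\bP^{n-1}_L$ is given by $\sigma$ acting on coefficients composed with the projective linear map $x_i\mapsto x_{i+1}$ (indices mod $n$), with the last coordinate scaled by $b$. Let $C_0\subset\bP^{n-1}_L$ be the standard $n$-gon $\bigcup_i \ell_{i,i+1}$, where $\ell_{i,i+1}$ is the line through the coordinate points $e_i$ and $e_{i+1}$. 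The twisted descent datum permutes the lines $\ell_{i,i+1}$ cyclically, so $C_0$ is stable and descends to a closed subscheme $C\subset\SB(A)$. The $n$-gon is a reduced connected nodal curve of arithmetic genus $1$ and degree $n$ that spans $\bP^{n-1}$, so its Hilbert polynomial is $nt$. Hence $[C]\in\Hilb(n)^A(k)$.

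\textbf{Smoothness of $\Hilb(n)$ at $[C_0]$.} Smoothness and membership in the closure of $V(n)$ can both be checked after base change to $\kbar$, where $\Hilb(n)^A$ becomes $\Hilb(n)$ and $C$ becomes $C_0$. First show $H^1(C_0,N_{C_0})=0$. Since $C_0$ is a local complete intersection, $N_{C_0}$ is locally free of rank $n-2$. Restricted to each component $\ell\cong\bP^1$, it is an extension of $N_{\ell/\bP^{n-1}}=\OO(1)^{n-2}$ by the torsion modifications coming from the two nodes, so $N_{C_0}|_\ell$ has all summands of degree $\geq 1$. A Mayer--Vietoris (normalization) sequence, or the standard computation $\chi(N)=\deg N_{C_0}=n^2$ together with $h^0=n^2$, then gives $H^1=0$. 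We expect this vanishing to be the main obstacle. The first thing we would try is the known computation for elliptic normal curves: $\deg N=n^2$, and the component count of $\Hilb(n)$ at an elliptic normal curve is $n^2$, via the $\PGL_n$-orbit of dimension $n^2-1$ plus the $j$-line. With $H^1=0$ in hand, $\Hilb(n)$ is smooth of dimension $h^0(N_{C_0})=n^2$ at $[C_0]$.

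\textbf{$[C_0]$ lies in the closure of $V(n)$.} Because $\Hilb(n)$ is smooth at $[C_0]$, exactly one irreducible component passes through it, and that component has dimension $n^2$. We then exhibit smoothings of the $n$-gon. One route is the Tate curve, i.e.\ the degeneration of elliptic normal curves with $j\to\infty$ to the $n$-gon via level-$n$ structure. An equivalent route is the Hesse-type family of $n$-gons as the fiber at the cusp of the universal elliptic normal curve over $X(n)$. Alternatively, the $T^1$ sheaf is a sum of skyscrapers at the nodes, and $H^1(N')=0$ gives surjectivity of $H^0(N)\to H^0(T^1)$, so a general first-order deformation smooths all nodes. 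Either way, the unique component through $[C_0]$ contains smooth, nondegenerate, connected curves of degree $n$ and genus $1$, so $[C_0]\in\overline{V(n)}$. This closure statement descends to $\Hilb(n)^A$ because the closure of $V(n)^A$ is a twist compatible with base change. Therefore $[C]$ is a smooth $k$-point of the closure of $V(n)^A$.
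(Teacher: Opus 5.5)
Your proposal is correct and takes the same route as the paper. You descend Artin's $n$-gon through the cyclic structure of $A$, get unobstructedness of $[C_0]$ from the vanishing of $H^1$ of the normal sheaf, and then smooth the $n$-gon; your $T^1$ argument is essentially the proof of Hartshorne's Proposition 29.9, which the paper cites.

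The only real difference is how you obtain $H^1(C_0,N_{C_0})=0$. You get it directly from the positive elementary modifications $\OO(1)^{n-2}\subset N_{C_0}|_\ell$ together with the normalization sequence. The paper instead proves the simpler vanishing $H^1(C,\OO_C(1))=0$ and deduces $H^1(N')=0$, and hence $H^1(N)=0$, via the Euler sequence.

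That Euler-sequence argument is also what justifies your claim $H^1(N')=0$, which you state without proof. Alternatively, surjectivity of $H^0(N_{C_0})\to H^0(T^1)$ follows from your own observation that, on each line, evaluation of sections of $N_{C_0}|_\ell$ at its two nodes is surjective.
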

		
\begin{proof}
Let $x_0,\dots,x_{n-1}$ be $n$ projectively independent $k$-points of $\bP^{n-1}_k$, and let $C\subset \bP^{n-1}_k$ be the union of the $n$ projective lines $\mathrm{Span}(x_i,x_j)$, where $j\equiv i+1\pmod n$. Let $\nu\colon \widetilde{C}\to C$ be the normalization map, so that $\widetilde{C}$ is isomorphic to the disjoint union of $n$ copies of $\bP^1_k$.
			
\begin{claim}\label{claim-unobstructed}
We have $H^1(C,\OO_C(1))=0$.
\end{claim}

\begin{proof}[Proof of Claim~\ref{claim-unobstructed}]
The $k$-points $x_0,\ldots,x_{n-1}$ are the nodes of $C$. For each $0\leq m\leq n-1$, let
$\epsilon_m\colon \Spec k\hookrightarrow C$ be the inclusion of $x_m$ in $C$. The
normalization sequence is
\[
0\longrightarrow \OO_C\longrightarrow \nu_*\OO_{\widetilde C}
\longrightarrow \bigoplus_{m=0}^{n-1}(\epsilon_m)_*k\longrightarrow 0,
\]
where the last map records the difference of the two values at the two
preimages of each node. Since $\nu^*\OO_C(1)=\OO_{\widetilde C}(1)$,
tensoring with $\OO_C(1)$ gives a short exact sequence
\[
0\longrightarrow \OO_C(1)\longrightarrow
\nu_*\OO_{\widetilde C}(1)
\longrightarrow \bigoplus_{m=0}^{n-1}(\epsilon_m)_*k\longrightarrow 0.
\]
Thus $\chi(\OO_C(1))=\chi(\OO_{\widetilde C}(1))-n=2n-n=n$. We now compute $h^0(C,\OO_C(1))$. A section of
$\OO_{\widetilde C}(1)$ is an $n$-tuple of sections, one on each component
of $\widetilde C\cong\coprod_{m=0}^{n-1}\mathbf P^1_k$. Hence $h^0(\widetilde C,\OO_{\widetilde C}(1))=2n$. Such an $n$-tuple descends to $C$ if and only if the two values over each node $x_m$ agree. These are $n$ independent linear conditions. Therefore $h^0(C,\OO_C(1))=2n-n=n$. Since $\chi(\OO_C(1)) = h^0(C,\OO_C(1)) =n$, it follows that $h^1(C,\OO_C(1))=0$. 
\end{proof}

			\begin{claim}\label{claim-hilbert-smooth}
				The curve $C$ defines a smooth $k$-point $[C]$ of the Hilbert scheme $\Hilb(n)$ of (\ref{hilbert-points}), and every open neighborhood of $[C]$ intersects $V(n)$ non-trivially.
			\end{claim}
			
			\begin{proof}[Proof of Claim~\ref{claim-hilbert-smooth}]
				Note that $C$ has degree $n$. By the genus formula for nodal curves, $C$ has arithmetic genus $1$. Thus $C$ has Hilbert polynomial $P(t)=nt$, and thus defines a $k$-point of $\Hilb(n)$. The conclusion follows from Claim~\ref{claim-unobstructed} and the deformation theory of the Hilbert scheme, due to Grothendieck \cite{grothendieck1962fondements}. More precisely, by \cite[Proposition~29.9]{hartshorne2010deformation}, the curve $C$ is smoothable inside $\bP^{n-1}_k$, that is, the unique irreducible component of $\Hilb(n)$ containing $[C]$ admits a dense open subscheme that parametrizes  elliptic normal curves. Moreover, Claim~\ref{claim-unobstructed} implies that $H^1(C,\NN)=0$, where $\NN$ is the normal sheaf of $C\subset \bP^{n-1}_k$ (see the proof of \cite[Proposition 29.9]{hartshorne2010deformation}), and by \cite[Theorem 1.1(c)]{hartshorne2010deformation} this in turn implies that the Hilbert scheme is smooth at $[C]$.
			\end{proof}
			
			Since $A$ is cyclic, there exists a Galois $G$-algebra $L/k$ splitting $A$, where $G$ is a cyclic group of order $n$. We fix an isomorphism $\SB(A)\cong \bP^{n-1}_L/G$ over $k$, where $G$ acts on $\bP^{n-1}_L$ by cyclically permuting the coordinates up to a scalar; see \cite[Construction 2.5.1]{gille2017central}. Let $C'\subset \SB(A)$ be the closed subscheme corresponding to $C_L/G$ under this isomorphism. Then $C'$ defines a $k$-point $[C']\in \Hilb(n)^A(k)$. By Claim~\ref{claim-hilbert-smooth}, the $k$-scheme $\Hilb(n)^A$ is smooth at $[C']$, and every open neighborhood of $[C']$ intersects $V(n)^A$ non-trivially. 
		\end{proof}
		
		\begin{remark}\label{rmk:large}    
			Recall that a field $k$ is said to be \emph{large} if every irreducible $k$-curve that has a smooth $k$-point has infinitely many $k$-points, see \cite{pop:large}. It follows that $X(k)$ is dense in $X$ for every irreducible $k$-variety $X$ that has a smooth $k$-point. 
			Thus Proposition~\ref{nodal} implies that, if $k$ is a large field and  $A$ is a cyclic algebra of degree $n\geq 3$ over $k$, then $V(n)^A(k)$ is Zariski-dense in $V(n)^A$.	
		\end{remark}

\section{Proof of Theorem~\ref{mainthm}}
\label{sec:thm1.2}

In this section, we study the Brauer--Manin obstruction for the existence of rational points on the Hilbert scheme of twisted elliptic normal curves, leading up to a proof of Theorem~\ref{mainthm}.
        
\subsection{Preliminaries on the Brauer--Manin obstruction for homogeneous spaces}\label{subsec:brauer-manin}

We follow the notation of \cite[\textsection 2.1 and \textsection 2.2]{demeio2026solvable}. Let $k$ be a number field, let $\Omega_k$ be the set of places of $k$, and for every $v\in \Omega_k$ let $k_v$ be the completion at $v$. Let $V$ be a smooth geometrically integral $k$-scheme of finite type, and consider the topological space \[V(k_\Omega)\coloneqq \prod_{v\in\Omega_k} V(k_v),\] which is endowed with the product of the $v$-adic topologies on each $V(k_v)$. For every $v\in\Omega_k$, let $\operatorname{inv}_v\colon \Br(k_v)\to \bQ/\bZ$
be the local invariant map. The (unramified) \emph{Brauer--Manin pairing} is
\begin{align}\label{eq:Brauer--Manin-pairing}
V(k_\Omega)\times \Br_{\nr}(V) &{} \longrightarrow \bQ/\bZ \\
(\{x_v\}_{v\in\Omega_k},\beta) &{} \longmapsto
        \sum_{v\in\Omega_k}\operatorname{inv}_v(\beta(x_v)),
\end{align}
where $\Br_{\nr}(V)$ is the unramified Brauer group of $V$. Observe that since $\beta$ is unramified, we have $\beta(x_v)=0$ for all but finitely many $v$, and hence the pairing is well defined. By definition, the (unramified) \emph{Brauer--Manin set} of $V$ is the subset of $V(k_\Omega)$ orthogonal to $\Br_{\nr}(V)$ under the Brauer--Manin pairing. We always consider this set with the product of the $v$-adic topologies.  By the Albert--Brauer--Hasse--Noether theorem, we have an exact sequence
\begin{equation}\label{eq:albert-brauer-hasse-noether}
        0\longrightarrow \Br(k)\longrightarrow
        \bigoplus_{v\in\Omega_k}\Br(k_v)
        \xlongrightarrow{\sum_v\operatorname{inv}_v}
        \bQ/\bZ
        \longrightarrow 0.
\end{equation}
It follows that the diagonal image of $V(k)$ in $V(k_\Omega)$ is contained in the Brauer--Manin set of $V$.

Our proofs of Theorems~\ref{mainthm} and~\ref{maincor} rely on the following.

\begin{theorem}[Demeio]\label{demeio-thm}
Let $k$ be a number field, and let $V$ be a homogeneous space of a
semisimple simply connected linear algebraic group over $k$, with finite
solvable geometric stabilizers. Then $V(k)$ is dense in the Brauer--Manin set of $V$. In particular, if the Brauer--Manin set of $V$ is non-empty, then $V(k)$ is non-empty.
\end{theorem}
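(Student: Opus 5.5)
This theorem is imported from \cite[Theorem~1.1]{demeio2026solvable}, and in the paper I would simply cite it. Still, here is the route I would take to prove it. The final assertion is formal: the Brauer--Manin set is open in itself, and a non-empty set in which $V(k)$ is dense must meet $V(k)$. So everything rests on the density statement, which I would prove by induction on the derived length of a geometric stabilizer $\bar H\subset G(\kbar)$. For the induction I would use the standard dévissage for homogeneous spaces $V$ of a semisimple simply connected group $G$ with finite stabilizer.

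\textbf{Step 1: base cases.} If $\bar H$ is trivial, then $V$ is a torsor under $G$. Kneser, Harder and Chernousov give the Hasse principle for such torsors, and strong approximation away from one place (or weak approximation for simply connected groups) gives density of $V(k)$ in $V(k_\Omega)$. This is the expected answer, since $\Br_{\nr}(V)=\Br(k)$ in this case. If $\bar H$ is abelian, or more generally supersolvable, I would invoke the results of Borovoi and of Harpaz--Wittenberg: $V(k)$ is dense in the Brauer--Manin set. Harpaz--Wittenberg obtain this by fibering over tori and using their fibration theorem for supersolvable fibrations.

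\textbf{Step 2: dévissage.} Let $\bar H$ be solvable. Choose a characteristic subgroup $\bar N\subset\bar H$ such that $\bar H/\bar N$ is abelian, for instance the last nontrivial term of the derived series. I would then embed $G\hookrightarrow \SL_m$ in such a way that $\bar N$ becomes the geometric stabilizer of a homogeneous space $W$ under a larger simply connected group, and I would form the fibration $V\to V'$ with fibers of type $\bar H/\bar N$. The usual trick, which replaces $V$ by $V\times^G \SL_m$, changes neither the density question nor the Brauer--Manin set, and it allows us to realise the quotient $\bar H\to\bar H/\bar N$ geometrically. This produces a morphism $V\to Y$ where $Y$ is a homogeneous space with stabilizer of smaller derived length. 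Its fibers over rational points are homogeneous spaces whose stabilizer is the abelian group $\bar H/\bar N$, or twists thereof. I would then apply a fibration theorem in the style of Harari \cite{harari1997fleches}, together with Harpaz--Wittenberg's refinement for non-proper fibers. Combined with the induction hypothesis on $Y$ and the abelian case for the fibers, this gives density on $V$.

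\textbf{Step 3: the main obstacle.} The hardest point is controlling the Brauer group, specifically the transcendental part of $\Br_{\nr}(V)$, and the vertical versus horizontal Brauer classes in the fibration. One has to show that a family of adelic points on $V$ orthogonal to $\Br_{\nr}(V)$ can be moved to a rational point $y\in Y(k)$ whose fiber $V_y$ has an adelic point orthogonal to $\Br_{\nr}(V_y)$. I would first try to restrict to the algebraic part $\Br_{1,\nr}$, using the explicit formulas of Lucchini Arteche and Demarche that express $\Br_{\nr}(V)/\Br_0$ via $H^1$ of $\bar H$ with cyclic restrictions (the ``Bogomolov multiplier'' type description). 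Then, for each fiber, I would use Harari's formal lemma to kill the finitely many relevant classes by adjusting at finitely many places. This is the step where solvability, rather than supersolvability, makes the descent delicate, and it is where I expect most of the work to lie.
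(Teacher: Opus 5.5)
Citing \cite[Theorem~1.1]{demeio2026solvable} is exactly what the paper does; it gives no argument beyond the citation. You are also right that the final sentence is formal, so as a proof in this paper your proposal is fine. The sketch you add, however, is not a working route to the theorem, and you should not present it as one.

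The gap is the fibration step. After the d\'evissage you have a smooth morphism $W\to Y$. Its base $Y$ is itself a homogeneous space with finite, in general non-abelian, stabilizer $\bar H/\bar N$, and its fibers have stabilizer $\bar N$. The induction hypothesis lets you approximate the image of an adelic point by some $y\in Y(k)$ at finitely many places. At all other places you still need $W_y(k_v)\neq\emptyset$, and then orthogonality to $\Br_{\nr}(W_y)$.

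At least for constant stabilizers, local solubility of $W_y$ is a local lifting problem along $\bar H\to\bar H/\bar N$ for the torsor defined by $y$. It is automatic at good places where that torsor is unramified. But $Y$ is affine and $y$ need not be integral, so the torsor will in general ramify at further places, and there the lifting can fail. Harari's theorem (Theorem~\ref{harari-thm}) controls this over $\bA^1$ by choosing integral specialization points and using that the fibers over closed points are split. On $Y$ there is no such control. Strong approximation, the obvious substitute, fails for $Y$ by Minchev's theorem, since $Y_{\kbar}$ is not simply connected. This is exactly where the known supersolvable results needed genuinely new input. Your Step~3 names the difficulty but does not resolve it, and that is where the real work in Demeio's theorem lies.

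Two smaller points. First, in Step~2 the roles are swapped. If $\bar N$ is the last non-trivial term of the derived series, then $\bar N$ is the abelian group and is the stabilizer of the fibers, while $\bar H/\bar N$ has smaller derived length and is the stabilizer of the base. Asking $\bar H/\bar N$ to be abelian would instead mean $\bar N\supseteq[\bar H,\bar H]$. Second, you cannot restrict to $\Br_{1,\nr}$. By Bogomolov's theorem, $\Br_{\nr}(V_{\kbar})$ is the Bogomolov multiplier of the stabilizer, which is non-zero for some $p$-groups. So transcendental classes genuinely occur for solvable stabilizers; for $\mathrm{He}_n$ the multiplier vanishes by Lemma~\ref{theta-lemma}(d), but that case is special.
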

\begin{proof}
See \cite[Theorem~1.1]{demeio2026solvable}
\end{proof}

We will also use the following observation.

\begin{lemma}\label{lemma:bm-open-rc}
Let $k$ be a number field, let $X$ be a smooth proper geometrically
integral $k$-scheme, and let $U\subset X$ be a dense open subscheme.
Assume that $\Br(X)/\Br(k)$ is finite and that $X(k)$ is dense
in the Brauer--Manin set of $X$. If $X(k)$ is non-empty, then $U(k)$ is non-empty.
\end{lemma}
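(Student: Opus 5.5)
Fix a rational point $x_0\in X(k)$. The plan is to use that the Brauer--Manin set is an open condition cut out by finitely many classes, together with the implicit function theorem at one place, to move $x_0$ off the closed set $X\smallsetminus U$.

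First, choose representatives $\beta_1,\dots,\beta_r\in\Br(X)$ of the finite group $\Br(X)/\Br(k)$. Since $X$ is smooth and proper, $\Br_{\nr}(X)=\Br(X)$. Classes coming from $\Br(k)$ pair trivially with every adelic point by \eqref{eq:albert-brauer-hasse-noether}, since constant classes evaluate to the same element of $\Br(k)$ at every local point. Hence the Brauer--Manin set of $X$ is the set of $(x_v)\in X(k_\Omega)$ with $\sum_v\inv_v(\beta_i(x_v))=0$ for $i=1,\dots,r$.

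Next, recall the standard fact that for each $\beta_i$ there is a finite set $S_i$ of places such that $\beta_i(x_v)=0$ for all $x_v\in X(k_v)$ with $v\notin S_i$. This uses properness of $X$ and a smooth model over a ring of $S$-integers on which $\beta_i$ extends. Recall also that each evaluation map $X(k_v)\to\Br(k_v)$, $x_v\mapsto\beta_i(x_v)$, is locally constant in the $v$-adic topology.

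Now fix one place $w$ (for instance, one not in $\bigcup_i S_i$, though any place works). Starting from the diagonal image of $x_0$, which lies in the Brauer--Manin set, replace its $w$-component by a point $y_w\in U(k_w)$ close to $x_0$ in $X(k_w)$. Such a point exists because $X$ is smooth at $x_0$: by the $w$-adic implicit function theorem, a neighborhood of $x_0$ in $X(k_w)$ is homeomorphic to an open ball in $k_w^{\dim X}$. The proper Zariski-closed subset $(X\smallsetminus U)(k_w)$ is then nowhere dense there, because a nonzero analytic function cannot vanish on an open ball. Choosing $y_w$ close enough to $x_0$ forces $\beta_i(y_w)=\beta_i(x_0)$ for all $i$, by local constancy. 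The modified adelic point therefore still lies in the Brauer--Manin set.

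The open set $\prod_{v\neq w}X(k_v)\times U(k_w)$ meets the Brauer--Manin set, namely at this modified point. By the density hypothesis it therefore contains a rational point $x\in X(k)$. Its $w$-component lies in $U(k_w)$, so $x\in U(k)$.

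The main obstacle is the nowhere-density of $(X\smallsetminus U)(k_w)$ near the smooth point $x_0$. This is routine via the $k_w$-analytic manifold structure, but it is where smoothness is really used. The finiteness of $\Br(X)/\Br(k)$ is exactly what makes the Brauer--Manin condition a finite set of locally constant conditions, so that modifying one component preserves membership.
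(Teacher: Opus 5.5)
Your proof is correct and follows essentially the same route as the paper: you use the finiteness of $\Br(X)/\Br(k)$ and the local constancy of evaluation to perturb the diagonal image of a rational point into $U(k_w)$ while staying in the Brauer--Manin set, and then apply density to the open set with $w$-component $U(k_w)$. The only difference is that the paper perturbs at every place simultaneously, producing a point of the Brauer--Manin set of $U$, whereas you perturb only at the single place $w$, which is all that the final step needs.
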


\begin{proof}
Fix $\alpha_1,\dots,\alpha_r\in \Br(X)$
whose images in $\Br(X)/\Br(k)$ generate. Let $P\in X(k)$. For every $\alpha\in \Br(X)$, the evaluation
map $X(k_v)\to\Br(k_v)$ given by $x\mapsto \alpha(x)$ is locally
constant. Therefore, for each place $v$, there is an open neighbourhood
$O_v\subset X(k_v)$ of $P$, in the $v$-adic topology, such that
$\alpha_i(x)=\alpha_i(P)$ for all $x\in O_v$ and all $i=1,\dots,r$.
Since $X$ is smooth, $U(k_v)$ is dense in $X(k_v)$ for the $v$-adic topology, and hence we may choose
$x_v\in U(k_v)\cap O_v$, for every $v\in \Omega_k$. Thus, for every
$i=1,\dots,r$ we have $\alpha_i(x_v)=\alpha_i(P)$ in $\Br(k_v)$. We show that $\{x_v\}_{v\in \Omega_k}$ belongs to the
Brauer--Manin set of $U$. For any $\beta\in \Br_{\nr}(U)=\Br(X)$, we may write $\beta=\sum_{i=1}^r n_i\alpha_i+\gamma$ for some $n_i\in\bZ$ and some $\gamma\in\Br(k)$. Then, for every
$v\in\Omega_k$,
\[
        \beta(x_v)
        =
        \sum_{i=1}^r n_i\alpha_i(x_v)+\gamma_v
        =
        \sum_{i=1}^r n_i\alpha_i(P)+\gamma_v
        =
        \beta(P)\mbox{ in $\Br(k_v)$,}
\]
where $\gamma_v$ denotes the image of $\gamma$ in $\Br(k_v)$. Therefore
\[
        \sum_{v\in\Omega_k}\inv_v(\beta(x_v))
        =
        \sum_{v\in\Omega_k}\inv_v(\beta(P))=0 \mbox{ in $\bQ/\bZ$,}
\]
where the second equality follows from \eqref{eq:albert-brauer-hasse-noether}.
Therefore $\{x_v\}_{v\in \Omega_k}$ lies in the Brauer--Manin set of $U$, as desired.

Fix a place $v_0\in \Omega_k$. Since $U$ is Zariski-open in $X$, the subset $U(k_{v_0})$ is an
open neighborhood of $x_{v_0}\in X(k_{v_0})$. Since $X(k)$ is dense in the Brauer--Manin set of $X$, there exists $Q\in X(k)$ whose image in $X(k_{v_0})$ lies in $U(k_{v_0})$. Since $U$ is Zariski-open in $X$, this implies that $Q\in U(k)$. Therefore $U(k)$ is non-empty, as desired.
\end{proof}

\begin{remark}\label{rem:RC}
We remark that by \cite[Theorem~5.5.2]{colliot2021brauer}, if $X_{\kbar}$ is rationally connected then the quotient $\Br(X)/\Br(k)$ is finite.
\end{remark}

\subsection{Preliminaries on the fibration method}

The fibration method is a circle of techniques to prove that the Brauer--Manin obstruction for weak approximation is the only one for varieties that admit a fibration structure over $\bP^1$ with rationally connected fibers.  See \cite[Section~1]{harpaz-wittenberg-fibration} for a survey.
        
A separated scheme of finite type over a perfect field $k$ is said to be \emph{split} if it contains a non-empty geometrically integral open subscheme. This definition is due to Skorobogatov; see \cite[Definition 0.1]{skorobogatov1996descent} or \cite[Definition 9.1.3]{colliot2021brauer}. 
		
We shall use the following form of the fibration method in the proofs of Theorems~\ref{maincor} and~\ref{mainthm}.

		\begin{theorem}[Harari]\label{harari-thm}
			Let $k$ be a number field, let $V$ be a separated geometrically integral $k$-scheme of finite type, let $f\colon V\to \bA^1_k$ be a surjective morphism, let $k(t)$ be the function field of $\bA^1_k$, and let $X/k(t)$ be a smooth projective compactification of the generic fiber of $f$. Assume:
			\begin{itemize}
				\item[{\em (i)}] for every closed point $P\in \bA^1_k$, the fiber $V_P$ is a split $k(P)$-scheme;
				\item[{\em (ii)}] the generic fiber $V_{k(t)}$ of $f$ is geometrically integral over $k(t)$ and admits a smooth $\kbar(t)$-point;
				\item[{\em (iii)}] $\Pic(X_{\overline{k(t)}})$ is torsion-free and $\Br(X_{\overline{k(t)}})$ is finite;
				\item[{\em (iv)}] there exists a non-empty open subscheme $U\subset\bA^1_k$ such that for all $u\in U(k)$, if $(V_u)^c$ is a smooth projective compactification of $V_u$, then $(V_u)^c(k)$ is dense in the Brauer--Manin set of $(V_u)^c$.
			\end{itemize}
			Then, for every smooth projective compactification $V^c$ of $V$, the set $V^c(k)$ is dense in the Brauer--Manin set of $V^c$ with respect to the product of the $v$-adic topologies.
		\end{theorem}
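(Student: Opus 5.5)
This is a known result: it is \cite[Proposition~3.1.1]{harari1997fleches}, in the form in which it is usually quoted in the literature on the fibration method (cf.\ \cite[Section~1]{harpaz-wittenberg-fibration}). My plan is therefore to check that hypotheses (i)--(iv) are exactly Harari's, and to recall the structure of his argument rather than reprove it.

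\textbf{Reduction to a fibration over $\bP^1_k$.} Since $k$ has characteristic zero, the Brauer--Manin set of a smooth projective variety and density of rational points in it are birational invariants among smooth projective models. So it suffices to treat one convenient $V^c$. By Hironaka, choose $V^c$ smooth projective, containing a dense open subset of $V$, with a morphism $\bar f\colon V^c\to\bP^1_k$ extending $f$; its generic fiber is a smooth compactification $X$ of $V_{k(t)}$. Hypothesis (ii) makes $X$ geometrically integral. The smooth $\kbar(t)$-point then implies, via the Lang--Nishimura argument, that only finitely many fibers of $\bar f$ fail to be split. Hypothesis (i) shows that over $\bA^1_k$ none of them fail.

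\textbf{Brauer group bookkeeping.} Let $\Br_{\mathrm{vert}}(V^c)\subset\Br(V^c)$ be the subgroup of classes coming from $\Br(k(t))$. Hypothesis (iii) implies that $\Br(X)/\Br_0(X)$ is finite: use Hochschild--Serre together with the fact that $H^1(k(t),\Pic(X_{\overline{k(t)}}))$ is finite, since $\Pic(X_{\overline{k(t)}})$ is a finitely generated torsion-free lattice. Hence $\Br(V^c)/\Br_{\mathrm{vert}}(V^c)$ is finite; fix representatives $\beta_1,\dots,\beta_r$. Because the fibers over $\bA^1_k$ are split, the vertical classes come from elements of $\Br(k(t))$ ramified only at the finitely many points of bad reduction, with residues controlled by the split components.

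\textbf{Core argument.} Start from an adelic point $(x_v)$ orthogonal to $\Br(V^c)$ and a finite set $S$ of places. Apply Harari's formal lemma to the finitely many classes $\beta_i$, restricted to a suitable open $V_0\subset V^c$ lying over $U$. This lets us enlarge $S$ and move the $x_v$, for $v\in S$, while preserving orthogonality to $\langle\beta_i\rangle$ and to the vertical classes. The latter reduces to a Brauer--Manin condition on $\bP^1_k$ for a finite subgroup of $\Br(k(t))$. Splitness of the fibers over $\bA^1_k$, combined with the Lang--Weil estimates, produces local points in fibers at places outside $S$.

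One then chooses $u\in U(k)$, close to $\bar f(x_v)$ for $v\in S$ and integral outside $S$ away from one auxiliary place, using strong approximation on $\bA^1$. The choice is made so that the specialization $\Br(X)/\Br_0(X)\to\Br((V_u)^c)/\Br_0$ is controlled by a Hilbert-irreducibility argument and the fiber $(V_u)^c$ carries an adelic point orthogonal to its Brauer group. Hypothesis (iv) then gives a rational point on $V_u$ near $(x_v)_{v\in S}$.

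\textbf{Main obstacle.} The delicate step is this last one: ensuring that the whole of $\Br((V_u)^c)$ is accounted for by specialization of the $\beta_i$. This is Harari's ``Hilbertian'' specialization of Brauer classes, applied to the finite group from (iii). I expect to cite it directly from \cite{harari1997fleches} rather than reprove it.
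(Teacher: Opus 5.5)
Your proposal is correct and takes the same approach as the paper: the paper also proves this statement only by citing \cite[Proposition~3.1.1]{harari1997fleches} (a variant of \cite[Theorem~4.2.1]{harari1994methode}), without reproving it. Two small inaccuracies in your sketch, which do not affect the citation: first, hypothesis (i) already forces the vertical part of $\Br(V^c)$ to equal $\Br_0(V^c)$, because a class $\beta\in\Br(k(t))$ with $\bar f^*\beta$ unramified has zero residue at every closed point $P$ of $\bA^1_k$ (the split fibre contains a multiplicity-one component $D$ with $k(P)$ algebraically closed in $k(D)$), so $\beta\in\Br(\bA^1_k)=\Br(k)$; second, the splitness of all but finitely many fibres follows from geometric integrality of the generic fibre, not from the smooth $\kbar(t)$-point.
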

\begin{proof}
This is \cite[Proposition 3.1.1]{harari1997fleches}, which is a variant of \cite[Theorem 4.2.1]{harari1994methode}.
\end{proof}
We shall verify assumption (i) of Theorem~\ref{harari-thm} using the following criterion.

		\begin{lemma}\label{skorobogatov-criterion}
			Let $k$ be a perfect field, let $X\to \bP^1_k$ be a morphism, where $X$ is a smooth proper irreducible $k$-scheme. Let $K/k$ be a field extension such that $k$ is algebraically closed in $K$ and $X_K\to \bP^1_K$ has a section. Then, for every closed point $P$ of $\bP^1_k$, the fiber $X_P$ is a split $k(P)$-scheme.
		\end{lemma}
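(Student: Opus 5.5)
The plan is to reduce to the standard fact (Skorobogatov, cf. \cite[Lemma~9.1.4 or the proof of Theorem~9.1.?]{colliot2021brauer}) that a closed fiber of a morphism from a smooth variety to a curve is split as soon as the fiber acquires a smooth point of multiplicity one over a suitable "unramified" extension. Concretely, fix a closed point $P\in\bP^1_k$ and the section $\sigma\colon \bP^1_K\to X_K$. I would consider the local ring $R=\OO_{\bP^1_k,P}$, a discrete valuation ring with uniformizer $t$ and residue field $k(P)$, and the discrete valuation ring $R_K=\OO_{\bP^1_K,P_K}$ at a point of $\bP^1_K$ above $P$. Because $k$ is algebraically closed in $K$ and $k$ is perfect, $k(P)\otimes_k K$ is a field (it is $k(P)$ finite separable, and $K$ is regular/linearly disjoint from $\kbar$ over $k$), so $P_K$ is a single point with residue field $k(P)\otimes_kK$ containing $k(P)$ algebraically closed, and $t$ remains a uniformizer of $R_K$.

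The section gives an $R_K$-point of $X$, i.e.\ a morphism $\Spec R_K\to X_R$ whose closed point maps to a point $x$ of the fiber $X_P$. The key step: since $t$ is a uniformizer of $R_K$ and the composite $\Spec R_K\to X\to \bP^1$ sends $t$ to $t$, the pullback of $t$ to $\OO_{X,x}$ is not in $\mathfrak m_x^2$. As $X$ is smooth, $\OO_{X,x}$ is regular, so $\OO_{X_P,x}=\OO_{X,x}/(t)$ is regular; hence $x$ lies on an irreducible component $Z$ of $X_P$ of multiplicity one, and $X_P$ is smooth at $x$ over the perfect field $k(P)$ (regular equals smooth since $k(P)$ is perfect). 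Let $Z^0$ be the smooth locus of $Z$ away from other components; it is a nonempty open of $X_P$, reduced.

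It remains to show $Z$ is geometrically integral over $k(P)$, i.e.\ $k(P)$ is algebraically closed in $k(Z)$. The residue field $k(x)$ embeds in $k(P)\otimes_kK$ (the residue field of $R_K$), and $x$ is a smooth point of $Z$; the algebraic closure $k'$ of $k(P)$ in $k(Z)$ embeds in $\OO_{Z,x}$ (normal local ring of a smooth point, so integral elements over the field land in it), hence in $k(x)\subset k(P)\otimes_kK$. Since $k(P)$ is algebraically closed in $k(P)\otimes_kK$ (as $k$ is in $K$ and $k(P)/k$ is finite separable), $k'=k(P)$. Thus $Z$ is geometrically irreducible, and being generically smooth it is geometrically integral, so $Z^0$ is the required open. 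The main obstacle I expect is the linear-disjointness bookkeeping: checking that $k(P)\otimes_kK$ is a field in which $k(P)$ is algebraically closed; this uses that $k$ algebraically closed in $K$ with $k$ perfect makes $K/k$ regular, hence $\kbar\otimes_kK$ a domain.
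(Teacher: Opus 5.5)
Your argument is correct and complete. Drop the placeholder reference to ``Theorem 9.1.?'', which you never use.

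The paper gives no direct argument here; it simply quotes \cite[Proposition 10.1.8]{colliot2021brauer}. Your proof is a self-contained version of the standard argument for the special case at hand:
\begin{itemize}
\item Let $x\in X_P$ be the image of the closed point of $\Spec R_K$. Since $\OO_{X,x}\to R_K$ is local and $t$ remains a uniformizer of $R_K$, you get $f^*t\notin\mathfrak m_x^2$.
\item Hence $X_P$ is regular at $x$, so smooth there because $k(P)$ is perfect. It follows that $x$ lies on a unique irreducible component $Z$ of multiplicity one.
\item Normality of $\OO_{Z,x}$ puts the algebraic closure of $k(P)$ in $k(Z)$ inside $k(x)\subset k(P)\otimes_k K$, as $k(P)$-algebras. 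Since $k(P)$ is algebraically closed in $k(P)\otimes_k K$, that closure is $k(P)$ itself.
\end{itemize}

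What your route buys is transparency about the hypotheses:
\begin{itemize}
\item Smoothness of $X$ enters only through regularity of $\OO_{X,x}$.
\item Perfectness of $k$ and algebraic closedness of $k$ in $K$ enter only to show that $k(P)\otimes_k K$ is a field in which $k(P)$ is algebraically closed.
\item Neither properness nor irreducibility of $X$ is needed once a genuine section over $\bP^1_K$ is given. In the paper, properness is used beforehand, to extend the rational section over $K(t)$ to an actual section.
\end{itemize}
The citation buys brevity.

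One presentational point: define $Z^0$ explicitly as the open subscheme of $X_P$ obtained from the smooth locus of $X_P$ by removing the other irreducible components. Then it is visibly a non-empty open subscheme of $X_P$ with its given scheme structure, as the definition of ``split'' requires.
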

		
		\begin{proof}
            This is a special case of \cite[Proposition 10.1.8]{colliot2021brauer}.
		\end{proof}

		\subsection{Proofs of Theorems~\ref{maincor} and~\ref{mainthm}}

		\begin{proof}[Proof of Theorem~\ref{mainthm}]
			Let $A$ be a central simple $k$-algebra such that $X=\mathbf{SB}(A)$, let $n=\dim(X)+1\geq 3$ be the degree of $A$, and let $P^A$ be the $\mathbf{PGL}_n$-torsor over $k$ corresponding to $A$. Let $f^A\colon V(n)^A\to \bA^1_k$ be the $\mathbf{PGL}(A)$-invariant morphism of \eqref{eq:morphism-f}. Since $k$ is a number field, it follows from the Albert--Brauer--Hasse--Noether theorem that $A$ is cyclic. Therefore, by Proposition~\ref{nodal}, there exists a smooth $k$-variety $V'$ containing $V(n)^A$ as a dense open subscheme and such that $V'(k)\neq\emptyset$. Since $k$ has characteristic zero, by Hironaka's resolution of singularities, there exist a smooth projective geometrically integral $k$-variety $W$ containing $V'$ (and hence $V(n)^A$) and a morphism $\Bar{f}^A\colon W\to \bP^1_k$ whose restriction to $V(n)^A$ is $f^A$. Since $V'$ has a $k$-point, so does $W$.
			
			We now check that the assumptions of Theorem~\ref{harari-thm} are satisfied by the restriction of $\Bar{f}^A$ over $\bA^1_k\subset \bP^1_k$. Let $t$ be the standard coordinate on $\bA^1_k$.
			
			\begin{itemize} 
				\item[(i)] Let $K\coloneqq k(\SB(A))$. Then $k$ is algebraically closed in $K$. Moreover, by Amitsur's theorem \cite{amitsur1955generic} (see also \cite[Theorem 5.4.1]{gille2017central}) the central simple algebra $A_K$ is split, and hence by Proposition~\ref{prop:pgla-stabilizer} (applied to any elliptic curve $E$ over $k(t)$ such that $\jj(E)=t$) the generic fiber of $(f^A)_K$ has a $K(t)$-point, that is, a rational section. By the valuative criterion for properness, any morphism from the generic point of a curve extends to the smooth proper model, which implies that $(\Bar{f}^A)_K$ has a section. Lemma~\ref{skorobogatov-criterion} now implies that, for every closed point $P\in \bA^1_k$, the fiber of $\Bar{f}^A$ at $P$ is a split $k(P)$-scheme.
				\item[(ii)] The generic fiber of $\Bar{f}^A$ is a smooth projective compactification of the generic fiber of $f^A$, which by \eqref{bigdiagram-twisted} is isomorphic to $V_t^{A_{k(t)}}$ over $k(t)$. Proposition~\ref{prop:pgla-stabilizer} (applied to any elliptic curve $E$ over $k(t)$ such that $\jj(E)=t$) implies that $V_t^{A_{k(t)}}$ is smooth over $k(t)$ and, since $A_{\kbar}$ is split, that $V_t^{A_{k(t)}}(\kbar(t))\neq \emptyset$. Thus, the generic fiber of $\Bar{f}^A$ is geometrically integral and has a smooth $\kbar(t)$-point.
				\item[(iii)] The geometric generic fiber of $\Bar{f}^A$ is a compactification of the $\SL(A_{\overline{k(t)}})$-homogeneous space $V_{\overline{k(t)}}$. Therefore it is unirational, and so in particular its Picard group is torsion-free and its Brauer group is finite by \cite[Theorem 5.5.2 and (6) p. 347]{colliot2021brauer}.
                
				\item[(iv)] If $U=\bA^1_k \smallsetminus\{0,1728\}$ and $u\in U(k)$, then by Corollary~\ref{solvable-stabilizers} the fiber $(f^A)^{-1}(u)=V_u^A$ is a homogeneous space under $\SL(A)$ with finite solvable stabilizer. By Theorem~\ref{demeio-thm}, $V_u^A(k)$ is dense in the Brauer--Manin set of $V_u^A$. A fortiori, if $(V_u^A)^c$ is a smooth projective compactification of $V_u^A$, then $(V_u^A)^c(k)$ is dense in the Brauer--Manin set of $(V_u^A)^c$ with respect to the product of the $v$-adic topologies.
			\end{itemize}
			
			By Theorem~\ref{harari-thm}, we deduce that $W(k)$ is dense in the Brauer--Manin set of $W$. 
			Since $W(k)\neq\emptyset$, the Brauer--Manin set of $W$ is not empty. Since $W_{\kbar}$ is rationally connected (being the total space of a fibration over $\bP^1_{\kbar}$ with rationally connected geometric generic fiber), by Lemma~\ref{lemma:bm-open-rc} and Remark~\ref{rem:RC} we deduce that $V(n)^A(k)\neq\emptyset$, as desired.
		\end{proof}

		\begin{proof}[Proof of Theorem~\ref{maincor}]
			Let $\alpha\in \Br(k)$. There exists a central simple algebra $A$ of degree $n\geqslant 3$ such that $\alpha=[A]$ in $\Br(k)$. By Theorem~\ref{mainthm}, there exists a twisted elliptic normal curve $C\subset \SB(A)$ over $k$. In particular, $\alpha_{k(C)}=0$ in $\Br(k(C))$. 
		\end{proof}

\section{Proof of Theorem~\ref{local-global-thm}}
\label{sec:local-global}

        In this section, we study the Brauer--Manin obstruction to the local-global principle for the Hilbert scheme of twisted elliptic normal curves, leading up to a proof of Theorem~\ref{local-global-thm}.
        
		\subsection{Preliminaries on the unramified Brauer group of homogeneous spaces}
		
		Let $k$ be a field of characteristic zero, let $G$ be an algebraic $k$-group, let $V$ be a homogeneous space under $G$ over $k$, let $\Bar{v}\in V(\kbar)$ be a geometric point of $V$, and let $H_{\Bar{v}}$ be the $G(\kbar)$-stabilizer of $\Bar{v}$. We suppose that $H_{\Bar{v}}$ is finite.  Let $G_{\Bar{v}} \subset G(\kbar) \rtimes \Gamma_k$ be the subgroup consisting of those pairs $(g, \sigma)$ such that $g\sigma(\Bar{v}) = \Bar{v}$. We have a short exact sequence of abstract groups
		\begin{equation}\label{eq:outer-action}
		1 \longrightarrow H_{\Bar{v}} \longrightarrow G_{\Bar{v}} \longrightarrow \Gamma_k \longrightarrow 1.
		\end{equation}
		This short exact sequence induces an outer action of $\Gamma_k$ on $H_{\Bar{v}}$, that is, a continuous group homomorphism $\Gamma_k\to \mathrm{Out}(H_{\Bar{v}})$. This outer action induces an action of $\Gamma_k$ on the abelianization $H^{\mathrm{ab}}_{\Bar{v}}$. More explicitly, let $\sigma \in \Gamma_k$, and choose a lift $(g_\sigma,\sigma)$ of $\sigma$ to
$G_{\Bar{v}}$, that is, $g_\sigma \sigma(\Bar{v})=\Bar{v}$. Then the outer automorphism of $H_{\Bar{v}}$ induced by $\sigma$ is given by $h\mapsto g_\sigma \sigma(h) g_\sigma^{-1}$, and the induced automorphism of $H_{\Bar{v}}^{\mathrm{ab}}$ is independent of the choice of $g_\sigma$.

        Recall that, for a finite $\Gamma_k$-module $M$, we write $\cdual{M}$ for the $\Gamma_k$-module $\mathrm{Hom}_{\bZ}(M,\mu_{\infty})$, where $\mu_{\infty}\subset \kbar^\times$ is the $\Gamma_k$-module of roots of unity. If $M$ has exponent $e$ and $k$ contains a primitive $e$th root of unity, then $\cdual{M}\cong \mathrm{Hom}_{\bZ}(M,\bQ/\bZ)\cong \mathrm{Hom}_{\bZ}(M,\bZ/e\bZ)$, where $\Gamma_k$ acts trivially on $\bQ/\bZ$ and $\bZ/e\bZ$.

		\begin{lemma}\label{h-ab}
			Let $k$ be a field of characteristic zero, let $G$ be a simply connected semisimple $k$-group, let $V$ be a homogeneous space under $G$ over $k$, let $\Bar{v}\in V(\kbar)$ be a geometric point of $V$, and let $H_{\Bar{v}}$ be the $G(\kbar)$-stabilizer of $\Bar{v}$. We suppose that $H_{\Bar{v}}$ is finite. 
            \begin{enumerate}
            \item[{\em (1)}] We have $\kbar[V]^{\times}=\kbar^\times$.
            
            \item[{\em (2)}] We have an isomorphism of $\Gamma_k$-modules $\Pic(V_{\kbar})\cong \cdual{H}^{\mathrm{ab}}_{\Bar{v}}$.
            \end{enumerate}
		\end{lemma}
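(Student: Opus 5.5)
Both parts only involve $V_{\kbar}$, so I will work over $\kbar$ and identify $V_{\kbar}\cong G_{\kbar}/H$ with $H=H_{\Bar{v}}$, using the orbit map $g\mapsto g\Bar{v}$. The key input is the quotient morphism $p\colon G_{\kbar}\to G_{\kbar}/H$. Since $H$ is finite and acts freely on the right, $p$ is a finite étale Galois cover with group $H$. For $G$ simply connected semisimple we have the standard facts $\kbar[G]^\times=\kbar^\times$ (Rosenlicht: an invertible function on a connected group is a scalar times a character, and a semisimple group has no nontrivial characters) and $\Pic(G_{\kbar})=0$ (Popov / Fossum--Iversen, since $\Pic(G_{\kbar})\cong \widehat{\pi_1(G)}$ is trivial).

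For (1), let $u\in\kbar[V]^\times$. Then $p^*u$ is an invertible function on $G_{\kbar}$, hence a constant, and so $u$ is constant.

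For (2), I will use the Hochschild--Serre spectral sequence for the $H$-torsor $p$ with coefficients in $\Gm$:
\[
H^p(H,H^q(G_{\kbar},\Gm))\Rightarrow H^{p+q}(V_{\kbar},\Gm).
\]
Since $H^0(G_{\kbar},\Gm)=\kbar^\times$ with trivial $H$-action and $H^1(G_{\kbar},\Gm)=\Pic(G_{\kbar})=0$, the low-degree exact sequence gives an isomorphism
\[
\Pic(V_{\kbar})\cong H^1(H,\kbar^\times)=\mathrm{Hom}(H,\kbar^\times)=\mathrm{Hom}(H^{\mathrm{ab}},\mu_\infty)=\cdual{H}^{\mathrm{ab}}.
\]
Concretely, a character $\chi\colon H\to\kbar^\times$ is sent to the line bundle $G\times^{H}\kbar_\chi$ on $G/H$.

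The remaining and main delicate step is $\Gamma_k$-equivariance. The spectral sequence is built over $\kbar$, and $\Gamma_k$ does not preserve $\Bar{v}$, so it does not act on $H$ directly. I would handle this via the group $G_{\Bar{v}}$ of \eqref{eq:outer-action}. Let $\sigma\in\Gamma_k$ with lift $(g_\sigma,\sigma)$. Then $\sigma$ acts on $V_{\kbar}$ by the $\sigma$-semilinear map $x\mapsto\sigma(x)$, and under the identification $V_{\kbar}=G/H$ this corresponds to $gH\mapsto \sigma(g)\sigma(H)$. Composing with left multiplication by $g_\sigma$, which is harmless because $G(\kbar)$ is connected and acts trivially on $\Pic$, turns this into $gH\mapsto g_\sigma\sigma(g)g_\sigma^{-1}H$. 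This map sends the bundle attached to $\chi$ to the bundle attached to $\sigma\circ\chi\circ(\text{conjugation})^{-1}$, which is precisely the twisted action on $\cdual{H}^{\mathrm{ab}}$ combining the Galois action on $\mu_\infty$ with the outer action on $H^{\mathrm{ab}}$. To justify that left translation by $g_\sigma$ acts trivially on $\Pic(V_{\kbar})$, I would note that $G$ is connected and $\Pic(V_{\kbar})$ is discrete (it is finite by the computation above), so the action of $G(\kbar)$ is trivial. Checking that the resulting action is independent of the lift and compatible with the identifications is routine bookkeeping.
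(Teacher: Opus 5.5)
Your proposal is correct. Part (1) is exactly the paper's argument: Rosenlicht gives $\kbar[G]^\times=\kbar^\times$, and the surjection $G_{\kbar}\to V_{\kbar}$ then gives $\kbar[V]^\times=\kbar^\times$.

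For part (2) the paper gives no argument; it only cites Harpaz--Wittenberg [(5.2)]. You instead derive the statement.
\begin{itemize}
\item You apply Hochschild--Serre to the $H_{\Bar{v}}$-torsor $G_{\kbar}\to V_{\kbar}$. Using $\kbar[G]^\times=\kbar^\times$ and $\Pic(G_{\kbar})=0$, you get $\Pic(V_{\kbar})\cong \mathrm{Hom}(H_{\Bar{v}},\kbar^\times)$.
\item You then check Galois-equivariance by composing the semilinear action of $\sigma$ with $\ell_{g_\sigma}$. The composite fixes $\Bar{v}$ and lifts to the semilinear automorphism $g\mapsto g_\sigma\sigma(g)g_\sigma^{-1}$ of $G_{\kbar}$, which normalizes $H_{\Bar{v}}$.
\end{itemize}
This is the standard argument behind the cited reference. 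What it buys is a self-contained proof that shows where each hypothesis enters. Simple connectedness gives $\Pic(G_{\kbar})=0$, which makes the edge map $H^1(H_{\Bar{v}},\kbar^\times)\to\Pic(V_{\kbar})$ surjective. The outer action \eqref{eq:outer-action} appears through the choice of the lifts $(g_\sigma,\sigma)$.

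One step is only loosely justified: the claim that $\ell_{g_\sigma}$ acts trivially on $\Pic(V_{\kbar})$ because $G$ is connected and $\Pic(V_{\kbar})$ is finite. Finiteness alone does not force an abstract group action to be trivial. You would need some rigidity input, for instance $\Pic(G\times V)=\Pic(G)\oplus\Pic(V)$, or the fact that $G(\kbar)$ has no nontrivial finite quotients.

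A direct argument avoids this. Left translation by $g$ on $G_{\kbar}$ commutes with the right $H_{\Bar{v}}$-action, so $[x,\lambda]\mapsto[gx,\lambda]$ is an isomorphism $\ell_g^*(G\times^{H_{\Bar{v}}}\kbar_\chi)\cong G\times^{H_{\Bar{v}}}\kbar_\chi$. Every class in $\Pic(V_{\kbar})$ is of this form by your computation, so translations act trivially.
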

		
		\begin{proof}
        (1) Since $G$ is semisimple, by Rosenlicht's Lemma we have $\kbar[G]^{\times}=\kbar^\times$; see for example \cite[Corollary p. 369]{popov1972stability}. Since we have a surjective morphism $G_{\kbar}\to V_{\kbar}$ over $\kbar$, this implies $\kbar[V]^{\times}=\kbar^\times$. 
            
		(2)	See for example \cite[(5.2)]{harpaz2020zero}.
		\end{proof}
		
		 In view of Lemma~\ref{h-ab}, the Hochschild–Serre spectral sequence 
        \[E^{p,q}_2\coloneqq H^p(k,H^q(V_{\kbar},\Gm))\Longrightarrow H^{p+q}(V,\Gm)\]
        yields an exact sequence
		\begin{equation}\label{hochschild-serre-seq}
			\Br(k) \longrightarrow \Br_1(V) \longrightarrow H^1(k, \cdual{H}^{\mathrm{ab}}_{\Bar{v}}) \xlongrightarrow{\delta} H^3(k, \kbar^\times).
		\end{equation}
		The map $\delta$ is trivial if $V$ admits a zero-cycle of degree $1$, by the functoriality of the Hochschild--Serre spectral sequence and a restriction-corestriction argument, or if $k$ is a number field, since then $H^3(k,\kbar^\times)=0$; see \cite[Remark 3.1]{harpaz-wittenberg-massey}. When $\delta=0$, we obtain an isomorphism $\Br_1(V)/\Br_0(V)\cong H^1(k,\cdual{H}_{\Bar{v}}^{\mathrm{ab}})$.

		\begin{proposition}\label{brauer-homogeneous}
			Let:
			\begin{itemize}
				\item[--] $k$ be a field of characteristic zero,
				\item[--] $G$ be a simply connected semisimple $k$-group,
				\item[--] $V$ be a homogeneous space under $G$ over $k$,
				\item[--] $\Bar{v}\in V(\kbar)$ be a geometric point of $V$,
				\item[--] $H_{\Bar{v}}\subset G(\kbar)$ be the stabilizer of $\Bar{v}$, of exponent $e\geq 1$, such that $k$ contains a primitive $e$th root of unity,
				\item[--] $\delta\colon H^1(k, \cdual{H}^{\mathrm{ab}}_{\Bar{v}})\to H^3(k, \kbar^\times)$ be the map appearing in (\ref{hochschild-serre-seq}), 
				\item[--] $L/k$ be a finite Galois subextension of $\kbar/k$ such that $V(L)\neq\emptyset$ and such that the outer action of $\Gamma_k$ on $H_{\Bar{v}}$ factors through $\Gal(L/k)$, and
				\item[--] $\mathrm{Inf}\colon H^1(\Gal(L/k),\cdual{H}^{\mathrm{ab}}_{\Bar{v}})\to H^1(k,\cdual{H}^{\mathrm{ab}}_{\Bar{v}})$ be the (injective) inflation map.
			\end{itemize} 		
			Then the image of $\Br_{1,\nr}(V)$ in $H^1(k, \cdual{H}^{\mathrm{ab}}_{\Bar{v}})$ coincides with the subgroup of the classes of the form $\mathrm{Inf}(\beta)$, where $\beta \in H^1(\Gal(L/k), \cdual{H}^{\mathrm{ab}}_{\Bar{v}})$ is such that $\delta(\mathrm{Inf}(\beta)) = 0$ and for every $g \in \Gal(L/k)$, letting $\bZ$ act on $H^{\mathrm{ab}}_{\Bar{v}}$ and on $\cdual{H}^{\mathrm{ab}}_{\Bar{v}}$ through $g$, the pullback $g^*\beta \in H^1(\bZ, \cdual{H}^{\mathrm{ab}}_{\Bar{v}})$ is orthogonal, with respect to the cup product pairing \begin{equation}\label{cup-product-pairing}
				H^0(\bZ, H^{\mathrm{ab}}_{\Bar{v}}) \times H^1(\bZ, \cdual{H}^{\mathrm{ab}}_{\Bar{v}}) \longrightarrow H^1(\bZ, \bQ/\bZ) = \bQ/\bZ,
			\end{equation} to the image of the natural map $(H_{\Bar{v}}/\mathrm{conj})^g \rightarrow (H^{\mathrm{ab}}_{\Bar{v}})^g = H^0(\bZ, H^{\mathrm{ab}}_{\Bar{v}})$.
		\end{proposition}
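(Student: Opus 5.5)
The plan is to reduce to the known description of the unramified algebraic Brauer group of a homogeneous space with finite stabilizer, namely the formula of Harari and Lucchini Arteche. We then check that the conditions in that formula can be tested on $\Gal(L/k)$.

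\textbf{Step 1 (the Hochschild--Serre identification).} By Lemma~\ref{h-ab}, we have $\kbar[V]^\times=\kbar^\times$ and $\Pic(V_{\kbar})\cong\cdual{H}^{\mathrm{ab}}_{\Bar v}$. The sequence (\ref{hochschild-serre-seq}) then identifies $\Br_1(V)/\Br_0(V)$ with $\ker\delta\subset H^1(k,\cdual{H}^{\mathrm{ab}}_{\Bar v})$. So we only need to decide which $\alpha\in\ker\delta$ come from $\Br_{1,\nr}(V)$.

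\textbf{Step 2 (residues via points over Laurent series fields).} To test unramifiedness, I would use the standard criterion (Colliot-Th\'el\`ene): a class $\alpha\in\Br(V)$ is unramified if and only if, for every field extension $K/k$ and every $x\in V(K(\!(t)\!))$, the evaluation $\alpha(x)$ has trivial residue in $H^1(K,\bQ/\bZ)$.

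Since $\mathrm{char}(k)=0$, we have $\Gamma_{K(\!(t)\!)}\cong\hat{\bZ}(1)\rtimes\Gamma_K$. A point $x$ corresponds to a lift $\Gamma_{K(\!(t)\!)}\to G_{\Bar v}$, up to the usual equivalence. The image of a topological generator of inertia is then an element $(h,1)$ with $h\in H_{\Bar v}$. Because $H_{\Bar v}$ is finite, tame ramification forces $h$ to satisfy a compatibility with the Frobenius-type element $g\in\Gamma_K$ that normalizes inertia. This compatibility says exactly that the conjugacy class of $h$ is fixed by the outer action of $g$ (after twisting by the cyclotomic character, which is trivial on the relevant finite quotient because $\mu_e\subset k$).

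A cocycle computation, carried out by functoriality from the universal torsor $G\to V$ and the extension (\ref{eq:outer-action}), then expresses the residue of $\alpha(x)$. I expect it to be the cup product of the image of $h$ in $(H^{\mathrm{ab}}_{\Bar v})^{g}$ with $g^*\alpha\in H^1(\bZ,\cdual{H}^{\mathrm{ab}}_{\Bar v})$. Conversely, every pair $(g,[h])$ with $[h]\in(H_{\Bar v}/\mathrm{conj})^g$ is realized by some $K$ and $x$, by choosing $K$ so that $g$ generates $\Gamma_K$ modulo a suitable open subgroup.

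This yields the criterion: $\alpha$ is unramified if and only if $g^*\alpha$ is orthogonal, under (\ref{cup-product-pairing}), to the image of $(H_{\Bar v}/\mathrm{conj})^g$ for all $g\in\Gamma_k$. This is the main obstacle. I would follow Harari's computation (and Lucchini Arteche's refinement) rather than redo it.

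\textbf{Step 3 (descent to $\Gal(L/k)$).} First I would show that unramified classes are inflated from $\Gal(L/k)$. Pick $x_0\in V(L)$. Then $\Gamma_L$ acts on $H^{\mathrm{ab}}_{\Bar v}$ trivially, so the restriction of $\alpha$ to $L$ lies in $\mathrm{Hom}(\Gamma_L,\cdual{H}^{\mathrm{ab}}_{\Bar v})$.

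For $g\in\Gamma_L$, the criterion of Step 2 makes this homomorphism orthogonal to the image of all of $H_{\Bar v}$, since $g$ acts trivially on conjugacy classes. That image generates $H^{\mathrm{ab}}_{\Bar v}$, so the pairing is perfect, and hence $\alpha(g)=0$ for $g\in\Gamma_L$. Consequently $\alpha|_{\Gamma_L}=0$, and inflation-restriction gives $\alpha=\mathrm{Inf}(\beta)$ for some $\beta\in H^1(\Gal(L/k),\cdual{H}^{\mathrm{ab}}_{\Bar v})$.

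Finally, for $\alpha=\mathrm{Inf}(\beta)$, the pullback $\sigma^*\alpha$ depends only on the image $g$ of $\sigma$ in $\Gal(L/k)$, and it equals $g^*\beta$. The set $(H_{\Bar v}/\mathrm{conj})^\sigma$ also depends only on $g$, because the outer action factors through $\Gal(L/k)$. Therefore the condition over all $\sigma\in\Gamma_k$ is equivalent to the stated condition over $g\in\Gal(L/k)$. Together with $\delta(\mathrm{Inf}(\beta))=0$ from Step 1, this gives the claimed description.
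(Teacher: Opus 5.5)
Your argument is sound and rests on the same input as the paper, but it is organized differently. The paper quotes Harpaz--Wittenberg's Corollary~3.4, which is already stated at the level of $\Gal(L/k)$. You instead quote the underlying Harari--Lucchini Arteche residue criterion over all $g\in\Gamma_k$ and derive the finite-level form yourself. Step~3 does this correctly. For $g\in\Gamma_L$ every conjugacy class is $g$-stable, and $\Gamma_L$ acts trivially on $\cdual{H}^{\mathrm{ab}}_{\Bar v}$ since $\mu_e\subset k$, so perfectness of the pairing kills $\alpha|_{\Gamma_L}$. For inflated classes, the condition at $\sigma\in\Gamma_k$ depends only on the image of $\sigma$ in $\Gal(L/k)$. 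This makes the reduction transparent, and it shows that, once the criterion over $\Gamma_k$ is available, the hypothesis $V(L)\neq\emptyset$ plays no role: your point $x_0$ is never used.

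The price is that everything then rests on Step~2 for homogeneous spaces that may have no $k$-point, which is exactly the generality the proposition needs, and your sketch does not cover that case. First, a $k$-morphism $G\to V$, which you want for the cocycle computation, exists only if $V(k)\neq\emptyset$. Harari's original computation concerns quotients $G/F$ by a finite $k$-group, so it has a rational point. Second, realizing a pair $(g,[h])$ requires a section over $\Gamma_K$ of the pullback of \eqref{eq:outer-action} whose image centralizes $h$. It also requires the vanishing of the resulting class in $H^1(K(\!(t)\!),G)$. Asking that ``$g$ generates $\Gamma_K$ modulo an open subgroup'' provides neither.

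To close this, you can cite the version of the criterion valid without rational points, which is the generality of the Harpaz--Wittenberg result the paper uses. Alternatively, reduce to the rational-point case by base change to $K=k(V)$. Since $K/k$ is regular, a class in $\Br_1(V)$ is unramified if and only if its pullback to $V_K$ is; compare residues along the boundary divisors of a smooth compactification. Moreover $V(K)\neq\emptyset$, and $\Gamma_K\to\Gamma_k$ is surjective, so your conditions over $\Gamma_K$ and over $\Gamma_k$ agree.
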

		
		\begin{proof}
			This is a special case of a result of Harpaz and Wittenberg \cite[Corollary 3.4]{harpaz-wittenberg-massey}, which builds upon previous work of Harari, Demarche, and Lucchini-Arteche.
		\end{proof}
		
		\begin{corollary}\label{brauer-corollary}
			Under the assumptions of Proposition~\ref{brauer-homogeneous}, let $\beta \in H^1(\Gal(L/k), \cdual{H}^{\mathrm{ab}}_{\Bar{v}})$ be such that  $\mathrm{Inf}(\beta)$ is in the image of the map $\Br_{1,\nr}(V)\to H^1(k, \cdual{H}^{\mathrm{ab}}_{\Bar{v}})$, and let $g\in \Gal(L/k)$ be such that the image of the natural map $(H_{\Bar{v}}/\mathrm{conj})^g \rightarrow (H^{\mathrm{ab}}_{\Bar{v}})^g$ generates $(H^{\mathrm{ab}}_{\Bar{v}})^g$. Then $g^*\beta=0$ in $H^1(\bZ,\cdual{H}^{\mathrm{ab}}_{\Bar{v}})$.
		\end{corollary}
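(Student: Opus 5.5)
By Proposition~\ref{brauer-homogeneous}, $g^*\beta$ is orthogonal, under the cup product pairing \eqref{cup-product-pairing}, to the image of $(H_{\Bar{v}}/\mathrm{conj})^g$ in $(H^{\mathrm{ab}}_{\Bar{v}})^g$. The plan is to show that the pairing
\[
(H^{\mathrm{ab}}_{\Bar{v}})^g \times H^1(\bZ,\cdual{H}^{\mathrm{ab}}_{\Bar{v}})\longrightarrow \bQ/\bZ
\]
is perfect, or at least non-degenerate in the second variable. Orthogonality to a generating set then forces $g^*\beta=0$.

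\textbf{Step 1: orthogonality to all of $(H^{\mathrm{ab}}_{\Bar{v}})^g$.} By hypothesis, $\mathrm{Inf}(\beta)$ comes from $\Br_{1,\nr}(V)$. The set of classes orthogonal to $g^*\beta$ is a subgroup, since the pairing is bilinear. By Proposition~\ref{brauer-homogeneous} this subgroup contains the image of $(H_{\Bar{v}}/\mathrm{conj})^g$, which generates $(H^{\mathrm{ab}}_{\Bar{v}})^g$ by assumption. Hence $g^*\beta$ pairs to zero with every element of $(H^{\mathrm{ab}}_{\Bar{v}})^g$.

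\textbf{Step 2: identify the pairing with finite-module duality.} Write $M=H^{\mathrm{ab}}_{\Bar{v}}$ with $\bZ$ acting through $g$. Because $k$ contains a primitive $e$th root of unity, $\cdual{M}=\mathrm{Hom}(M,\bQ/\bZ)$ with the contragredient action. For cyclic-group cohomology we have
\[
H^0(\bZ,M)=M^g,\qquad H^1(\bZ,\cdual{M})=\cdual{M}/(g-1)\cdual{M}=(\cdual{M})_g .
\]
The cup product into $H^1(\bZ,\bQ/\bZ)=\bQ/\bZ$ is then evaluation: $(m,\phi)\mapsto \phi(m)$. This identification only needs a short check with the standard cocycle description of $H^1(\bZ,-)$ as coinvariants.

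\textbf{Step 3: non-degeneracy.} Pontryagin duality for the finite abelian group $M$ gives
\[
\mathrm{Hom}(M^g,\bQ/\bZ)=\mathrm{Hom}\bigl(\ker(g-1),\bQ/\bZ\bigr)=\operatorname{coker}\bigl((g-1)^\vee\bigr)=(\cdual{M})_g ,
\]
using that $\mathrm{Hom}(-,\bQ/\bZ)$ is exact and converts kernels into cokernels. So the pairing $M^g\times(\cdual{M})_g\to\bQ/\bZ$ is perfect. A class in $(\cdual{M})_g$ that kills all of $M^g$ must therefore vanish, which gives $g^*\beta=0$.

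The main point needing care is Step 2: checking that the abstract cup product in \eqref{cup-product-pairing}, with the sign and action conventions of Harpaz--Wittenberg, really is evaluation on coinvariants. Any discrepancy would be at most a sign or an automorphism, so it would not affect perfectness. The rest is routine.
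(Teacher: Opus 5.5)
Your proposal is correct and follows essentially the same route as the paper. It identifies the cup product pairing $H^0(\bZ,M)\times H^1(\bZ,\cdual{M})\to\bQ/\bZ$ with the evaluation pairing $M^g\times \cdual{M}/(g-1)\cdual{M}\to\bQ/\bZ$, shows this pairing is perfect by duality for finite abelian groups, and concludes from orthogonality to a generating set of $(H^{\mathrm{ab}}_{\Bar{v}})^g$ (the paper likewise leaves implicit that injectivity of $\mathrm{Inf}$ is what transfers the condition of Proposition~\ref{brauer-homogeneous} to $\beta$ itself).
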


		\begin{proof}
			Let $M$ be a finite abelian group of exponent $e$ with a $\bZ$-action. Let $\sigma$ be the automorphism of $M$ determined by the action of $1\in \bZ$. The tautological perfect pairing $M\times \cdual{M}\to \bQ/\bZ$ induces a perfect pairing $M^\sigma\times \cdual{M}/(\sigma-1)\to \bQ/\bZ$, which is isomorphic to the cup product pairing $H^0(\bZ,M)\times H^1(\bZ,\cdual{M})\to \bQ/\bZ$. Therefore the cup product pairing (\ref{cup-product-pairing}) is perfect. 
			
			By Proposition~\ref{brauer-homogeneous}, the class $g^*\beta\in H^1(\bZ,\cdual{H}^{\mathrm{ab}}_{\Bar{v}})$  is orthogonal, under the cup product pairing (\ref{cup-product-pairing}), to the subgroup of $(H_{\Bar{v}}^{\mathrm{ab}})^g$ generated by the image of $(H_{\Bar{v}}/\mathrm{conj})^g \rightarrow (H^{\mathrm{ab}}_{\Bar{v}})^g$. By assumption, this subgroup is equal to $(H_{\Bar{v}}^{\mathrm{ab}})^g=H^0(\bZ,H^{\mathrm{ab}}_{\Bar{v}})$. Thus $g^*\beta$ is orthogonal to $H^0(\bZ,H_{\Bar{v}}^{\mathrm{ab}})$. Since the pairing is perfect, we conclude that $g^*\beta=0$, as desired.
		\end{proof}

\subsection{Geometric stabilizers of \texorpdfstring{$V_E^A$}{VEA}: group theory and outer Galois action}\label{subsec:geometric-stabilizers-detail}
		        
In this section, we include more refined information about the stabilizers of the homogeneous space $V_E^A$, which is needed for the proofs of Theorems~\ref{local-global-thm} and~\ref{weak-approximation-thm}.

\begin{definition}\label{heisenberg-def}
Let $k$ be a field, let $n\geq 3$ be an integer invertible in $k$, and let $e_1,\dots,e_n$ be a basis of the $k$-vector space $k^n$. The $k$-group $\bZ/n\bZ\times_k \mu_n$ acts projectively on $\bA^n_k$ as follows: for all $k$-algebras $R$ and all $(a,\zeta)\in (\bZ/n\bZ\times_k \mu_n)(R)$, we have $(a,\zeta)\cdot(e_i)=\zeta^i e_{i+a}$, where the indices are considered modulo $n$. This defines an injective homomorphism 
\[\kappa_n\colon \bZ/n\bZ\times_k \mu_n \lhook\joinrel\longrightarrow \PGL_n.\]
By definition, the \emph{Heisenberg group} $\mathrm{He}_n$ is the central extension of $\bZ/n\bZ\times_k \mu_n$ defined by the following commutative diagram with exact rows:
\[
\begin{tikzcd}
    1 \arrow[r] & \mu_n \arrow[d,equal] \arrow[r]  & \mathrm{He}_n \arrow[r] \arrow[d,hook] & \bZ/n\bZ\times_k \mu_n \arrow[d,hook,"\kappa_n"]\arrow[r] & 1 \\
    1 \arrow[r] & \mu_n \arrow[r] & \SL_n \arrow[r] & \PGL_n \arrow[r] & 1.
\end{tikzcd}
\]
\end{definition}

\begin{proposition}\label{picard-scheme-twisted}
Let $k$ be a field, let $n\geq 3$ be an integer invertible in $k$, and let $A$ be a central simple algebra of degree $n$ over $k$. Fix $j\in k$, and let $E$ be an elliptic curve over $k$ such that $\jj(E)=j$. For every $\Bar{v}\in V_E^A(\kbar)$, the $\SL(A)(\kbar)$-stabilizer of $\Bar{v}$ is isomorphic to $\mathrm{He}_n(\kbar)$.
\end{proposition}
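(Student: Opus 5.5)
The stabilizer is geometric, so I will base change to $\kbar$ at the outset. There $A$ splits and $\SL(A)_{\kbar}\cong\SL_{n,\kbar}$. The map $\SL(A)\to\PGL(A)$ in \eqref{eq:mun-sla-pgla} is surjective with kernel $\mu_n$, and $\SL(A)$ acts on $V_E^A$ through $\PGL(A)$. Hence the $\SL_n(\kbar)$-stabilizer of $\Bar v$ is the full preimage of the $\PGL_n(\kbar)$-stabilizer of $\Bar v$. It therefore suffices to prove two things: the $\PGL_n(\kbar)$-stabilizer is conjugate to $\kappa_n(\bZ/n\bZ\times\mu_n)(\kbar)$, and its preimage in $\SL_n(\kbar)$ is $\mathrm{He}_n(\kbar)$. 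The second point is immediate from Definition~\ref{heisenberg-def}, because the Heisenberg group is defined as exactly this preimage. Conjugating in $\PGL_n(\kbar)$ lifts to conjugating in $\SL_n(\kbar)$, so abstract isomorphism types are preserved.

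For the first point I will use Proposition~\ref{prop:pgla-stabilizer} over $\kbar$. We have $V_{E,\kbar}\cong R_E/E[n]$, where $R_E=\mathrm{Isom}(\bP_{E,n},\bP^{n-1})$ is a $\PGL_n$-torsor. Choose a point $r\in R_E(\kbar)$ lifting $\Bar v$. An element $g\in\PGL_n(\kbar)$ fixes $\Bar v$ exactly when $g r=r\,h_n(P)$ for some $P\in E[n](\kbar)$, and this $P$ is unique because the $E[n]$-action is free. Thus the stabilizer equals $r\,h_n(E[n](\kbar))\,r^{-1}$, which is isomorphic to $E[n](\kbar)\cong(\bZ/n\bZ)^2$ through the embedding $h_n$ of Lemma~\ref{lem:h_n-closed}.

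It remains to identify the subgroup $h_n(E[n])\subset\mathrm{Aut}(\bP_{E,n})$ with the image of $\kappa_n$ up to conjugacy. This is the classical Schrödinger representation, and it is the step I expect to need the most care. The argument runs as follows.
\begin{enumerate}
\item Pick a basis $P,Q$ of $E[n](\kbar)$ whose Weil pairing $e_n(P,Q)=\zeta$ is a primitive $n$th root of unity.
\item Lift $\tau_P$ and $\tau_Q$ to linear automorphisms $\tilde P,\tilde Q$ of $H^0(E,\OO(n\cdot 0_E))$ using theta-group isomorphisms $\tau_P^*\OO(n0_E)\cong\OO(n0_E)$. Their commutator is the scalar $e_n(P,Q)$; this is Mumford's commutator pairing, which equals the Weil pairing.
\item Each of $\tilde P$ and $\tilde Q$ has $n$th power a scalar, so after rescaling we may take $\tilde P^n=\tilde Q^n=1$.
\item Take a basis of eigenvectors $e_i$ of $\tilde Q$. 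The relation $\tilde P\tilde Q\tilde P^{-1}=\zeta^{\pm1}\tilde Q$ shows that $\tilde P$ permutes the eigenspaces cyclically. Normalizing the basis gives $\tilde P e_i=e_{i+1}$ and $\tilde Q e_i=\zeta^i e_i$.
\end{enumerate}
This is exactly $\kappa_n(1,1)$ and $\kappa_n(0,\zeta)$ from Definition~\ref{heisenberg-def}. So $h_n(E[n])$ is conjugate to $\kappa_n(\bZ/n\bZ\times\mu_n)(\kbar)$ in $\PGL_n(\kbar)$.

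Checking that each eigenspace of $\tilde Q$ is one-dimensional and that $\tilde P$ acts transitively on them uses the irreducibility of the Heisenberg representation. I will argue this directly by a dimension count: the $n$ eigenvalues $\zeta^i$ each occur, they are permuted transitively, and $\dim H^0=n$.
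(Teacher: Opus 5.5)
Your proof is correct and follows the same route as the paper. Both arguments pass to $\kbar$, use Proposition~\ref{prop:pgla-stabilizer} to identify the $\PGL_n(\kbar)$-stabilizer with a conjugate of $h_n(E[n](\kbar))$, put that subgroup in the normal form $\kappa_n(1,1)$, $\kappa_n(0,\zeta)$, and take the full preimage in $\SL_n(\kbar)$. The only difference is that the paper cites a lemma of Fisher for this normal form, whereas you derive it directly from the theta-group commutator and the eigenspace dimension count, which is a valid self-contained substitute.
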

		
\begin{proof}
We may assume that $k$ is algebraically closed, so that $A$ is split and hence $\SL(A)=\SL_n$. 
            
Let $\zeta_n\in k^\times$ be a primitive $n$-th root of unity, let $e_n\colon E[n](k)\times E[n](k)\to \mu_n(k)$ be the Weil pairing (see for example \cite[III.8]{silverman}), and let $P_1,P_2\in E[n](k)$ be such that $e_n(P_1,P_2)=\zeta_n$. By \cite[Lemma 3.4]{fisher2010pfaffian}, we may choose coordinates on $\bP^{n-1}_k$ such that $h_n(P_1)=\kappa_n(1,1)$ and $h_n(P_2)=\kappa_n(0,\zeta_n)$. It follows that $h_n(E[n](k))$ is conjugate to the image of $\kappa_n$. Therefore, by Proposition~\ref{prop:pgla-stabilizer}, the $\PGL_n(k)$-stabilizer of $\Bar{v}$ is conjugate to the image of $\kappa_n$, and hence the $\SL_n(k)$-stabilizer of $\Bar{v}$ is conjugate $\mathrm{He}_n(k)$. 
\end{proof}

\begin{lemma}\label{theta-lemma}
Let $k$ be an algebraically closed field of characteristic zero, let $n\geq 3$ be an integer, let $\mathrm{He}_n\subset \SL_n$ be as in Definition~\ref{heisenberg-def}, and write $H\coloneqq \mathrm{He}_n(k)$.
\begin{enumerate}
\item[{\em (a)}]	The group $H$ is solvable. Moreover, we have $Z(H)=[H,H]=\mu_n(k)\cong \bZ/n\bZ$ and $H^{\mathrm{ab}}\cong \bZ/n\bZ\times\mu_n(k)$. 

\item[{\em (b)}] The exponent of $H$ divides $2n$.

\item[{\em (c)}] Every automorphism of $H$ that induces the identity on $H^{\mathrm{ab}}$ is inner.

\item[{\em (d)}] The Bogomolov multiplier of $H$ is trivial. 

\item[{\em (e)}] Let $u_1,u_2\in H$ be such that their images $\Bar{u}_1,\Bar{u}_2\in H^{\mathrm{ab}}$ form a $(\bZ/n\bZ)$-basis of $H^{\mathrm{ab}}$, and let $\varphi\in\Aut(H)$ be an automorphism whose induced automorphism of $H^{\mathrm{ab}}$ fixes $\Bar{u}_1$. Then $\varphi(u_1)$ is conjugate to $u_1$.

\end{enumerate}
\end{lemma}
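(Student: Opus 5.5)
The plan is to work with explicit generators. Fix a primitive $n$th root of unity $\zeta$. Let $X\in \GL_n(k)$ be the cyclic shift $e_i\mapsto e_{i+1}$ and $Y$ the diagonal matrix $e_i\mapsto \zeta^i e_i$. A direct computation gives the commutator $XYX^{-1}Y^{-1}=\zeta^{\pm1}\cdot 1$, a scalar. Since $\det Y=\zeta^{n(n+1)/2}$ and $\det X=(-1)^{n-1}$ are $\pm1$, we can fix scalars $\varepsilon_X,\varepsilon_Y$ with $\varepsilon^n=\pm1$ so that $\tilde X=\varepsilon_XX$ and $\tilde Y=\varepsilon_YY$ lie in $\SL_n(k)$. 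Then
\[
H=\{\lambda\tilde X^a\tilde Y^b:\lambda\in\mu_n(k),\ a,b\in\bZ/n\bZ\},
\]
and there is the central extension $1\to\mu_n(k)\to H\to \bZ/n\bZ\times\mu_n(k)\to1$. The key structural fact I would isolate first is the commutator pairing
\[
H/\mu_n\times H/\mu_n\longrightarrow \mu_n(k),\qquad (g,h)\mapsto [g,h].
\]
It is well defined and bilinear because commutators are central. It is the symplectic form $((a,b),(a',b'))\mapsto \zeta^{\pm(ab'-a'b)}$, hence nondegenerate. Everything else follows from this fact.

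\emph{(a)}. Since $[H,H]\subset\mu_n$ is central, $H$ is nilpotent of class at most $2$, hence solvable. The commutator $[\tilde X,\tilde Y]$ is a generator of $\mu_n$, so $[H,H]=\mu_n$, and therefore $H^{\mathrm{ab}}=H/\mu_n\cong\bZ/n\bZ\times\mu_n(k)$. An element is central if and only if its image pairs trivially with everything, so by nondegeneracy $Z(H)=\mu_n$.

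\emph{(b)}. Write $g=\lambda\tilde X^a\tilde Y^b$. Using the class-$2$ identity $(xy)^n=x^ny^n[y,x]^{n(n-1)/2}$, and $X^n=Y^n=1$ in $\GL_n$, one finds that $g^n$ is a scalar equal to a product of $\lambda^n=1$, the factors $\varepsilon^n=\pm1$, and $\zeta^{\pm ab\,n(n-1)/2}=\pm1$. Thus $g^n=\pm1$, and so $g^{2n}=1$.

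\emph{(c)}. If $\varphi$ is trivial on $H^{\mathrm{ab}}$, then $\varphi(h)=h\,c(h)$ with $c(h)\in\mu_n$ central. One checks that $c$ is a homomorphism $H\to\mu_n$, which necessarily factors through $H^{\mathrm{ab}}$. By nondegeneracy of the pairing, every character $H^{\mathrm{ab}}\to\mu_n$ is of the form $h\mapsto[g,h]$ for some $g\in H$, possibly after inverting according to the sign convention. Hence $\varphi(h)=ghg^{-1}$, so $\varphi$ is inner.

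\emph{(e)}. Since $\varphi$ fixes $\bar u_1$, we have $\varphi(u_1)=u_1z$ with $z\in\mu_n$. Because $\bar u_1,\bar u_2$ is a basis and the pairing is nondegenerate, $[\bar u_2,\bar u_1]$ generates $\mu_n$. Hence some power $g=u_2^m$ satisfies $gu_1g^{-1}u_1^{-1}=z$, that is, $gu_1g^{-1}=u_1z=\varphi(u_1)$.

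\emph{(d)}. I expect this to be the main obstacle, since it requires an external input. I would use the classical criterion (Bogomolov; see also Kang) that a finite group with an abelian normal subgroup $N$ such that $G/N$ is cyclic has trivial Bogomolov multiplier. Take $N$ to be the preimage in $H$ of $0\times\mu_n(k)$, namely $N=\mu_n\cdot\langle\tilde Y\rangle$. It is normal, being the preimage of a subgroup of the abelian quotient. It is abelian, because the pairing restricted to the isotropic line $0\times\mu_n$ is trivial. Finally, $H/N\cong\bZ/n\bZ$ is cyclic. If one wants to avoid citing this criterion, the fallback is a direct computation via the Hochschild--Serre spectral sequence for $N\triangleleft H$, checking that every class in $H^2(H,\bQ/\bZ)$ restricting trivially to all abelian subgroups (in particular to $N$ and to $\langle \tilde X^a\tilde Y^b\rangle\cdot\mu_n$) vanishes. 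That route is more laborious, so I would try the cited criterion first.
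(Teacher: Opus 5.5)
Your proof is correct. For (a), (b), (c) and (e) it is essentially the paper's argument. The paper also works with normalized lifts $\sigma_1=\lambda S$ and $\sigma_2=\mu D$ of the shift and diagonal matrices. It proves (b) via the same class-$2$ power formula, and it proves (e) exactly as you do: the commutator of the two basis lifts generates the center, so a power of $u_2$ conjugates $u_1$ to $\varphi(u_1)$.

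In those parts the differences are only organizational. You put the nondegenerate commutator pairing $H^{\mathrm{ab}}\times H^{\mathrm{ab}}\to\mu_n(k)$ up front and derive $Z(H)=\mu_n(k)$ and (c) from it. The paper instead identifies the center by a direct matrix argument: the centralizer of $D$ is diagonal, and diagonal matrices commuting with $S$ are scalar. In (c) it writes down the explicit conjugating element $\sigma_1^b\sigma_2^{-a}$. Your pairing is precisely the fact the paper uses in (d) and (e), namely that $\lambda_H\colon\bigwedge^2 H^{\mathrm{ab}}\to Z(H)$ is an isomorphism.

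The genuine divergence is in (d). The paper applies Bogomolov's formula for central extensions $1\to C\to G\to\Gamma\to 1$ of finite abelian groups. That formula says $B_0(G)$ is dual to $S_G/S_{\mathrm{bic}}$, where $S_G=\ker(\lambda_G\colon\bigwedge^2\Gamma\to C)$. Since $\lambda_H$ is injective, $S_H=0$ and the proof is finished.

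You instead use the criterion that a finite group with an abelian normal subgroup having cyclic quotient has trivial Bogomolov multiplier. Your choice $N=\mu_n(k)\cdot\langle\tilde Y\rangle$ does satisfy its hypotheses. It is normal as the preimage of a subgroup of the abelian quotient, it is abelian, and $H/N\cong\bZ/n\bZ$.

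Each route has an advantage. Yours is structural, needs no computation beyond exhibiting $N$, and applies well beyond nilpotency class $2$. The paper's route is tailored to class $2$, but within your own set-up it is immediate. The nondegeneracy you already proved is exactly the injectivity of $\lambda_H$. So you could have closed (d) without the cyclic-quotient citation, and certainly without the Hochschild--Serre fallback.
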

		
		\begin{proof}
			We may suppose that $k$ is algebraically closed. We fix a primitive $n$th root of unity $\zeta_n$. We write the standard basis of $k^n$ as $e_i$, where $i\in \bZ/n\bZ$. Let $S,D\in \GL_n(k)$ be the matrices defined by $S(e_i)=e_{i+1}$ and $D(e_i)=\zeta_n^i e_i$ for all $i\in\bZ/n\bZ$.
Then $S$ and $D$ map to $\kappa_n(1,1)$ and $\kappa_n(0,\zeta_n)$ in $\PGL_n(k)$. Since $k$ is algebraically closed, we may choose $\lambda,\mu\in k^\times$ such that $\det(\lambda S)=\det(\mu D)=1$, and we set
\[
        \sigma_1\coloneqq \lambda S,\qquad \sigma_2\coloneqq \mu D,\qquad \tau\coloneqq[\sigma_1,\sigma_2]=[S,D]=\zeta_n^{-1}\mathrm{I}_n.
\]
Then $\sigma_1,\sigma_2$ belong to $H$ and lift $\Bar{\sigma}_1\coloneqq \kappa_n(1,1)$ and $\Bar{\sigma}_2\coloneqq \kappa_n(0,\zeta_n)$, respectively, while $\tau$ generates the central subgroup $\mu_n(k)\subset \mathrm{He}_n(k)$. The elements $\sigma_1,\sigma_2$ generate $H$.
			
			(a) The fact that $H$ is solvable follows from the diagram appearing in Definition~\ref{heisenberg-def}. Since $\sigma_1$ and $\sigma_2$ generate $H$, the derived subgroup $[H,H]$ is the smallest normal subgroup containing their commutator $\tau$. Since $\tau$ is central, this implies that $[H,H]$ is generated by $\tau$. In particular, $H^{\mathrm{ab}}\cong \bZ/n\bZ\times \mu_n(k)$. The matrices in $\SL_n(k)$ that commute with $D$ are diagonal, and the only diagonal matrices in $\SL_n(k)$ that commute with $S$ are scalar. This shows that $Z(H)$ is generated by $\tau$.
			
			(b) Every element of $H$ can be written as $\sigma_1^a\sigma_2^b\tau^c$ for some integers $a,b,c$. Moreover, for all integers $a,a',b,b',c,c'$, we have
			\[(\sigma_1^a\sigma_2^b\tau^c)(\sigma_1^{a'}\sigma_2^{b'}\tau^{c'})=\sigma_1^{a+a'}\sigma_2^{b+b'}\tau^{c+c'-a'b}.\]
			By induction on $m\geq 1$, we get
			\[(\sigma_1^a\sigma_2^b\tau^c)^m=\sigma_1^{ma}\sigma_2^{mb}\tau^{mc-ab\frac{m(m-1)}{2}}\]
			for all integers $a,b,c$ and all $m\geq 1$. Now, if $h\in H$, there exist $a,b,c\in\bZ$ such that $h=\sigma_1^a\sigma_2^b\tau^c$. Since $\sigma_1^{2n}=\sigma_2^{2n}=1$, we conclude that
			\[h^{2n}=(\sigma_1^a\sigma_2^b\tau^c)^{2n}=\sigma_1^{2na}\sigma_2^{2nb}\tau^{n(2c-ab(2n-1))}=1\cdot 1\cdot 1=1.\]
			
			(c) Let $\varphi\in\Aut(H)$ be an automorphism which induces the identity on $H^{\mathrm{ab}}$. By (a), we have $\varphi(\sigma_1)=\sigma_1\tau^a$ and $\varphi(\sigma_2)=\sigma_2\tau^b$ for some $a,b\in\bZ$. Let $\psi\in\Aut(H)$ be the inner automorphism given by conjugation by $\sigma_1^b\sigma_2^{-a}$. Since $\sigma_1\sigma_2\sigma_1^{-1}=\sigma_2\tau$ and $\sigma_2\sigma_1\sigma_2^{-1}=\sigma_1\tau^{-1}$, we have 
            \[\psi(\sigma_1)=\sigma_1\tau^a=\varphi(\sigma_1),\qquad \psi(\sigma_2)=\sigma_2\tau^b=\varphi(\sigma_2).\] Thus $\psi=\varphi$ is inner.
			
			(d) More generally, consider a central short exact sequence of groups
			\[1\longrightarrow C\longrightarrow G\longrightarrow \Gamma\longrightarrow 1,\]
			where $C$ and $\Gamma$ are finite abelian groups. This sequence gives rise to a well-defined group homomorphism $\lambda_G\colon \bigwedge^2(\Gamma)\to C$, given by $\gamma_1\wedge\gamma_2\mapsto [g_1,g_2]$ for all $\gamma_1,\gamma_2\in\Gamma$, where $g_i\in G$ is a lift of $\gamma_i\in \Gamma$, for $i=1,2$. We let $S_G\coloneqq \ker(\lambda_G)$, and let $S_{\mathrm{bic}}$ be the subgroup of $S_G$ generated by the $\lambda_1\wedge\lambda_2$ which lie in $S_G$. By Bogomolov's formula \cite[Theorem 7.3]{colliot2007rationality}, the Bogomolov multiplier of $G$ is isomorphic to the dual of $S_G/S_{\mathrm{bic}}$. 
			
			We apply this to the central extension
			\[1\longrightarrow Z(H)\longrightarrow H\longrightarrow H^{\mathrm{ab}}\longrightarrow 1.\]
			Since $\bigwedge^2(H^{\mathrm{ab}})$ is a free $\bZ/n\bZ$-module generated by $\Bar{\sigma}_1\wedge \Bar{\sigma}_2$, every element of 	$\bigwedge^2(H^{\mathrm{ab}})$ is of the form $\lambda_1\wedge\lambda_2$ for some $\lambda_i\in H^{\mathrm{ab}}$. Thus $S_H=S_{\mathrm{bic}}$, and hence the Bogomolov multiplier of $H$ is trivial. In fact, since $[\sigma_1,\sigma_2]=\tau$ generates $Z(H)$, the map $\lambda_H$ is an isomorphism, and so $S_H=S_{\mathrm{bic}}=0$.
			
			(e) Let $u\coloneqq [u_1,u_2]$. Since $\Bar{u}_1$ and $\Bar{u}_2$ form a basis of $H^{\mathrm{ab}}$, the element $\Bar{u}_1\wedge \Bar{u}_2\in \bigwedge^2(H^{\mathrm{ab}})$ is a generator. As discussed in (d), the map $\lambda_H$ is an isomorphism. It follows that $u=\lambda_H(\Bar{u}_1\wedge \Bar{u}_2)$ generates $Z(H)$. In view of (a), we deduce that $\langle u \rangle=Z(H)=[H,H]$. 
			
			By assumption, $\varphi(u_1)$ and $u_1$ differ by an element of $[H,H]$. As $[H,H]=\langle u\rangle$, this implies that $\varphi(u_1)=u^iu_1$ for some integer $i$. From the identity $u_2^{-1}u_1u_2=u u_1$, we deduce that $\varphi(u_1)=u^i u_1=u_2^{-i}u_1u_2^i$, that is, $\varphi(u_1)$ is conjugate to $u_1$.
		\end{proof}

\begin{lemma}\label{lem:galois-action-stabilizer-general}
Let $k$ be a field of characteristic zero, let $G$ be an algebraic
$k$-group, let $H \subset G$ be a finite $k$-subgroup, and let $P$ be a
right $G$-torsor over $k$. Set $Q \coloneqq P/H$, so that $Q$ is a
homogeneous space under $G_P \coloneqq \Aut_G(P)$. Let
$\Bar{v} \in Q(\kbar)$, and let $H_{\Bar{v}}$ be the stabilizer of
$\Bar{v}$ for the action of $(G_P)(\kbar)$ on $Q_{\kbar}$. Then there is
an isomorphism of $\Gamma_k$-modules
\[
        H_{\Bar{v}}^{\mathrm{ab}} \cong H^{\mathrm{ab}}(\kbar),
\]
where $\Gamma_k$ acts on $H_{\Bar{v}}^{\mathrm{ab}}$ via
\eqref{eq:outer-action}, applied to the homogeneous space $Q$ under $G_P$.
\end{lemma}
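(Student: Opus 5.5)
We compute the stabilizer and the outer Galois action explicitly in terms of a geometric point of $P$. Choose $\bar p\in P(\kbar)$ mapping to $\Bar v\in Q(\kbar)$; this is possible since $P\to Q$ is surjective on $\kbar$-points, being an $H$-torsor over an algebraically closed field. The point $\bar p$ gives an isomorphism $G_{\kbar}\xrightarrow{\sim}(G_P)_{\kbar}$, $g\mapsto \phi_g$, where $\phi_g$ is the $G$-equivariant automorphism of $P_{\kbar}$ determined by $\phi_g(\bar p)=\bar p g$; explicitly $\phi_g(\bar p x)=\bar p g x$. A short computation then shows that $\phi_g$ fixes $\Bar v=\bar pH$ if and only if $g\in H(\kbar)$, so $H_{\Bar v}=\{\phi_h: h\in H(\kbar)\}\cong H(\kbar)$ as abstract groups. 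It remains to compare the Galois actions on abelianizations.

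For $\sigma\in\Gamma_k$, write $\sigma(\bar p)=\bar p\, c_\sigma$ with $c_\sigma\in G(\kbar)$; this is the cocycle of $P$. Since $\Bar v$ is fixed by no particular $\sigma$, we need a lift $(g_\sigma,\sigma)\in G_{\Bar v}$, that is, an element $g_\sigma\in G_P(\kbar)$ with $g_\sigma\sigma(\Bar v)=\Bar v$. Because $\sigma(\Bar v)=\bar p c_\sigma H$, we may take $g_\sigma=\phi_{c_\sigma^{-1}}$; one checks that $\phi_{c_\sigma^{-1}}(\bar p c_\sigma)=\bar p$. Next we compute the twisted automorphism $h'\mapsto g_\sigma\,\sigma(h')\,g_\sigma^{-1}$ on $H_{\Bar v}$. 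The Galois action on $G_P(\kbar)$ satisfies $\sigma(\phi_g)(\sigma\bar p)=\sigma(\bar p g)$, hence $\sigma(\phi_g)(\bar p c_\sigma)=\bar p c_\sigma\sigma(g)$. This gives $\sigma(\phi_g)=\phi_{c_\sigma\sigma(g)c_\sigma^{-1}}$. Therefore
\[
g_\sigma\,\sigma(\phi_h)\,g_\sigma^{-1}=\phi_{c_\sigma^{-1}}\phi_{c_\sigma\sigma(h)c_\sigma^{-1}}\phi_{c_\sigma}=\phi_{\sigma(h)},
\]
where we use that $g\mapsto\phi_g$ is a homomorphism. (Depending on conventions it may instead be an anti-homomorphism; I would fix the conventions carefully so that the right action of $G$ on $P$ and the left action of $G_P$ match. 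Either way the conjugation by $c_\sigma$ cancels.)

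Consequently, under $h\mapsto\phi_h$, the outer action of $\sigma$ on $H_{\Bar v}$ is represented by the honest automorphism $h\mapsto\sigma(h)$ of $H(\kbar)$, namely the natural Galois action, because $H$ is a $k$-subgroup. Passing to abelianizations gives a $\Gamma_k$-equivariant isomorphism $H_{\Bar v}^{\mathrm{ab}}\cong H^{\mathrm{ab}}(\kbar)$, and this map is independent of the choice of lift $g_\sigma$, as noted after \eqref{eq:outer-action}. The main obstacle is bookkeeping: getting the left/right conventions for $\Aut_G(P)$ and its Galois action consistent, so that the cocycle terms $c_\sigma$ genuinely cancel rather than leaving a residual conjugation. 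Even a residual conjugation by an element normalizing $H$ would be harmless only on $H^{\mathrm{ab}}$ if it were inner, so the convention check must be done carefully.
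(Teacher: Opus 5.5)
Your proof is correct and follows the paper's argument essentially step for step: you fix $\bar p$ over $\Bar v$, identify $H_{\Bar v}$ with $H(\kbar)$ via $h\mapsto\phi_h$, compute $\sigma(\phi_g)=\phi_{c_\sigma\sigma(g)c_\sigma^{-1}}$, and use the lift $\phi_{c_\sigma^{-1}}$ so that the cocycle cancels and the outer action becomes $\phi_h\mapsto\phi_{\sigma(h)}$. Your worry about conventions is unfounded: with $\phi_g(\bar p x)=\bar p g x$ one checks directly that $\phi_{g_1}\phi_{g_2}=\phi_{g_1g_2}$, so $g\mapsto\phi_g$ is an honest homomorphism and the cancellation goes through exactly as you wrote it.
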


\begin{proof}
The $k$-group $G_P=\Aut_G(P)$ acts on $P$ on the left and this action
commutes with the right $G$-action on $P$. In particular, it commutes with
the restricted right $H$-action, and therefore descends to an action of
$G_P$ on $Q=P/H$.

Fix $\bar p \in P(\kbar)$ lying above $\Bar{v}$. For every
$g\in G(\kbar)$, let $\varphi_g \in G_P(\kbar)$ be the $G_{\kbar}$-equivariant automorphism of $P_{\kbar}$ defined by $\varphi_g(\bar p g')=\bar p g g'$ for all $g'\in G(\kbar)$. Since $\Bar{p}$ lifts $\Bar{v}$, we have $\varphi_g\in H_{\Bar{v}}$ if and only if $g\in H(\kbar)$. Therefore, sending $g\mapsto \varphi_g$ gives an isomorphism
$G(\kbar)\xrightarrow{\sim} G_P(\kbar)$ which restricts to an isomorphism $\varphi\colon H(\kbar)\xrightarrow{\sim} H_{\Bar{v}}$. Let $\varphi^{\mathrm{ab}}\colon
        H^{\mathrm{ab}}(\kbar)\xrightarrow{\sim}
        H_{\Bar{v}}^{\mathrm{ab}}$
be the induced isomorphism on abelianizations. It remains to show that
$\varphi^{\mathrm{ab}}$ is $\Gamma_k$-equivariant. 

Let $\sigma\in \Gamma_k$, and let $g_\sigma\in G(\kbar)$ be the unique element such that $\sigma(\bar p)=\bar p g_\sigma$. For every $g\in G(\kbar)$, the automorphism $\sigma(\varphi_g)$ sends $\bar p g'$ to $\bar p g_\sigma\sigma(g)g_\sigma^{-1}g'=\varphi_{g_\sigma\sigma(g)g_\sigma^{-1}}(\bar p g')$, and hence
\begin{equation}\label{eq:galois-action-stabilizer-1}
        \sigma(\varphi_g)=\varphi_{g_\sigma\sigma(g)g_\sigma^{-1}} \quad\text{ for every $g\in G(\kbar)$}.
\end{equation}

Since $\sigma(\Bar{v})=\bar p g_\sigma H$, the element
$\varphi_{g_\sigma^{-1}}\in G_P(\kbar)$ sends $\sigma(\Bar{v})$ to
$\Bar{v}$. Therefore, with the notation of \eqref{eq:outer-action}, the pair $(\varphi_{g_\sigma^{-1}},\sigma)$ belongs to the subgroup $(G_P)_{\Bar{v}}\subset G_P(\kbar)\rtimes \Gamma_k$ defined in \eqref{eq:outer-action}. It follows that the outer automorphism of $H_{\Bar{v}}$ corresponding to $\sigma$ is represented by the automorphism that sends $\psi\in H_{\Bar{v}}$ to $\varphi_{g_\sigma^{-1}}\sigma(\psi)\varphi_{g_\sigma}$. Combining this with \eqref{eq:galois-action-stabilizer-1}, we deduce for every $h\in H(\kbar)$ the equality \begin{equation}\label{eq:galois-action-stabilizer-2}
\varphi_{g_\sigma^{-1}}\sigma(\varphi_h)\varphi_{g_\sigma}=\varphi_{g_\sigma^{-1}}\varphi_{g_\sigma\sigma(h)g_\sigma^{-1}}\varphi_{g_\sigma}= \varphi_{\sigma(h)}\qquad\text{in $G_P(\kbar)$, hence in $H_{\Bar{v}}$.}
\end{equation}
From \eqref{eq:galois-action-stabilizer-2}, we obtain a commutative square
\[
\begin{tikzcd}
        H(\kbar) \arrow[r,"\varphi"] \arrow[d,"\sigma"']
        & H_{\Bar{v}} \arrow[d,
        "\varphi_{g_\sigma^{-1}}\sigma(-)\varphi_{g_\sigma}"] \\
        H(\kbar) \arrow[r,"\varphi"]
        & H_{\Bar{v}}.
\end{tikzcd}
\]
Passing to abelianizations, the right vertical arrow induces the
action of $\sigma$ on $H_{\Bar{v}}^{\mathrm{ab}}$, while the left
vertical arrow induces the natural Galois action on
$H^{\mathrm{ab}}(\kbar)$. Therefore
$\varphi^{\mathrm{ab}}$ is $\Gamma_k$-equivariant, as desired.
\end{proof}

\begin{proposition}\label{prop:vea-stabilizer-ab}
Let $k$ be a field of characteristic zero, let $n\geq 3$, let $A$ be a central
simple algebra of degree $n$ over $k$, and let $E$ be an elliptic curve over $k$. Let
$\Bar{v}\in V_E^A(\kbar)$, and let $H_{\Bar{v}}$ be the stabilizer of
$\Bar{v}$ for the action of $\SL(A)(\kbar)$ on $V_E^A(\kbar)$. Then we have
isomorphisms of $\Gamma_k$-modules
\[
H_{\Bar{v}}^{\mathrm{ab}}\cong \cdual{H}_{\Bar{v}}^{\mathrm{ab}}\cong E[n](\kbar).
\]
\end{proposition}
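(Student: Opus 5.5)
We will prove the two isomorphisms separately: first $H_{\Bar{v}}^{\mathrm{ab}}\cong E[n](\kbar)$, then self-duality of $E[n](\kbar)$ via the Weil pairing.

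\emph{Step 1: realize $V_E^A$ as $P/H$.} We put $V_E^A$ in the form required by Lemma~\ref{lem:galois-action-stabilizer-general}. Choosing a basis of $H^0(E,\OO(n\cdot 0_E))$ identifies $\bP_{E,n}\cong\bP^{n-1}_k$. Via $h_n$ this gives a finite $k$-subgroup $h_n(E[n])\subset\PGL_n$. Let $\mathcal H\subset\SL_n$ be its preimage, a finite $k$-subgroup, central extension of $E[n]$ by $\mu_n$, geometrically isomorphic to $\mathrm{He}_n$ by Proposition~\ref{picard-scheme-twisted}. Lift the $\PGL_n$-torsor $R_E^A\cong P^A$ to $\SL$-data as follows. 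By Proposition~\ref{prop:pgla-stabilizer}, $V_E^A\cong R_E^A/E[n]$. The $\SL(A)$-action on $V_E^A$ factors through $\PGL(A)$, and the $\SL(A)(\kbar)$-stabilizer is the full preimage of the $\PGL(A)(\kbar)$-stabilizer. Hence it is cleaner to view $V_E^A$ as $\SL(A)/\mathcal H$ twisted appropriately, but we avoid choosing an $\SL_n$-torsor (which may not exist). Instead we apply the argument of Lemma~\ref{lem:galois-action-stabilizer-general} directly, as follows. Fix $\bar p\in R_E^A(\kbar)$ over $\Bar v$ and set $g_\sigma$ with $\sigma(\bar p)=\bar p g_\sigma$, where $g_\sigma\in\PGL_n(\kbar)$. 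Choose lifts $\tilde g_\sigma\in\SL_n(\kbar)$. Conjugation by $\tilde g_\sigma$ on $\SL_n$ depends only on $g_\sigma$. The same computation as in Lemma~\ref{lem:galois-action-stabilizer-general} then shows that $H_{\Bar v}\cong\mathcal H(\kbar)$ with outer action matching the natural Galois action on $\mathcal H(\kbar)$ up to inner automorphisms. Passing to abelianizations gives $H_{\Bar v}^{\mathrm{ab}}\cong\mathcal H(\kbar)^{\mathrm{ab}}$ as $\Gamma_k$-modules.

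\emph{Step 2: identify the abelianization.} By Lemma~\ref{theta-lemma}(a), $[\mathcal H(\kbar),\mathcal H(\kbar)]=\mu_n(\kbar)$ is the kernel of $\mathcal H\to h_n(E[n])$. Hence $\mathcal H(\kbar)^{\mathrm{ab}}\cong h_n(E[n])(\kbar)\cong E[n](\kbar)$. This isomorphism is Galois equivariant because $h_n$ is a $k$-morphism (Lemma~\ref{lem:h_n-closed}).

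\emph{Step 3: duality.} The Weil pairing $e_n\colon E[n]\times E[n]\to\mu_n$ is perfect, alternating and $\Gamma_k$-equivariant. It therefore gives a $\Gamma_k$-isomorphism $E[n](\kbar)\to\mathrm{Hom}(E[n](\kbar),\mu_n)=\cdual{E[n](\kbar)}$. Combining with Step 2 gives $\cdual{H}_{\Bar v}^{\mathrm{ab}}\cong E[n](\kbar)$ as well.

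The main obstacle is Step 1: checking that Galois acts compatibly even though only a $\PGL_n$-torsor (not an $\SL_n$-torsor) is available, and that the lift ambiguity in $\tilde g_\sigma$ (a central $\mu_n$ element) is harmless. This works because central elements act trivially by conjugation. If the direct computation becomes messy, an alternative is to pass first to the $\PGL(A)$-stabilizer, where Lemma~\ref{lem:galois-action-stabilizer-general} applies verbatim with $G=\PGL_n$, $H=h_n(E[n])$ abelian, and $P=R_E^A$. One then notes that $H_{\Bar v}\to$ ($\PGL(A)$-stabilizer) is surjective with kernel $\mu_n=[H_{\Bar v},H_{\Bar v}]$, and that this map is compatible with the outer actions.
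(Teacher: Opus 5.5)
Your proposal is correct. The alternative you sketch at the end is exactly the paper's proof. The paper applies Lemma~\ref{lem:galois-action-stabilizer-general} verbatim at the $\PGL$ level, with $G=\Aut(\bP_{E,n})$, $H=E[n]$ and $P=R_E^A$, to get $S_{\Bar v}\cong E[n](\kbar)$. It then restricts the central sequence $1\to\mu_n\to H_{\Bar v}\to S_{\Bar v}\to 1$ and uses Proposition~\ref{picard-scheme-twisted} with Lemma~\ref{theta-lemma}(a) to see that the kernel is exactly $[H_{\Bar v},H_{\Bar v}]$. Compatibility of outer actions comes for free because $\SL(A)\to\PGL(A)$ is a $k$-homomorphism. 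Your Step~3 (Weil pairing) is also the same as the paper's.

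Your primary Step~1 takes a slightly different route: you rerun the lemma's computation one level up, in $\SL$, using lifts $\tilde g_\sigma\in\SL_n(\kbar)$. This is legitimate. $\SL(A)$ is the inner twist of $\SL_n$ by the $\PGL_n$-cocycle $g_\sigma$, and conjugation by a lift does not depend on the lift. Hence $\sigma$ acts by $\tilde\varphi_g\mapsto\tilde\varphi_{\tilde g_\sigma\sigma(g)\tilde g_\sigma^{-1}}$, and the same cancellation goes through. This buys a bit more than the statement asks for: it identifies the outer Galois action on $H_{\Bar v}$ itself with the natural Galois action on the $k$-form $\mathcal H$ of $\mathrm{He}_n$, not just on abelianizations. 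The price is that you must justify this twisted Galois action on $\SL(A)(\kbar)$ separately. The paper's ordering avoids that, since the lemma is only needed for a torsor that actually exists, namely the $\PGL_n$-torsor $R_E^A$.
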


\begin{proof}
By Proposition~\ref{prop:pgla-stabilizer}, $V_E^A$ is in the situation of Lemma~\ref{lem:galois-action-stabilizer-general}, with $G=\Aut(\bP_{E,n})$,  $H=E[n]$ and $P=R_E^A$. Therefore, letting $S_{\Bar{v}}\subset \PGL(A)(\kbar)$ be the stabilizer of $\Bar{v}$ for the $\PGL(A)(\kbar)$-action, we have $S_{\Bar{v}}\cong E[n](\kbar)$ as $\Gamma_k$-modules.  The central exact sequence \eqref{eq:mun-sla-pgla} restricts to an exact sequence
\[
        1\longrightarrow \mu_n(\kbar)
        \longrightarrow H_{\Bar{v}}
        \longrightarrow S_{\Bar{v}}
        \longrightarrow 1.
\]
By Proposition~\ref{picard-scheme-twisted}, the group $H_{\Bar{v}}$ is isomorphic to $\mathrm{He}_n(\kbar)$. By Lemma~\ref{theta-lemma}(a), the commutator subgroup of $H_{\Bar{v}}$ is exactly the central subgroup $\mu_n(\kbar)$. Therefore the projection $H_{\Bar{v}}\to S_{\Bar{v}}$ induces an isomorphism $H_{\Bar{v}}^{\mathrm{ab}}\xrightarrow{\sim}S_{\Bar{v}}$.
This isomorphism is compatible with the outer Galois actions because it is induced by the $k$-homomorphism $\SL(A)\to\PGL(A)$. Therefore $H_{\Bar{v}}^{\mathrm{ab}}\cong E[n](\kbar)$ as $\Gamma_k$-modules.

Finally, the Weil pairing is perfect and $\Gamma_k$-equivariant, so it gives an isomorphism of $\Gamma_k$-modules $E[n](\kbar)\cong \operatorname{Hom}_{\bZ}(E[n](\kbar),\mu_n)$. 
Thus $\cdual{H}_{\Bar{v}}^{\mathrm{ab}}\cong E[n](\kbar)$, as desired.
\end{proof}

        \subsection{Local points on \texorpdfstring{$V_E^A$}{VEA} in the presence of full level structure}

        Let $k$ be a number field, let $n\geq 3$ be an integer, let $A$ be a central simple algebra of degree $n$ over $k$, and let $E$ be an elliptic curve over $k$. In order to apply Corollary~\ref{brauer-corollary} to the $\SL(A)$-homogeneous space $V_E^A$, one must first choose a finite Galois extension $L/k$ such that $V_E^A(L)\neq\emptyset$. The next proposition, which refines~\cite[Theorem~C]{antieau-auel}, shows that this is possible if $E_L$ has full level $n$-structure. In the proof of Proposition~\ref{brauer-vanishing}, we will apply this over the field $L=k(E[2n])$ to show that $\Br_{\nr}(V_E^A)=\Br_0(V_E^A)$. 
		
		\begin{proposition}\label{full-level-structure}
			Let $k$ be a field, let $n\geq 3$ be an integer invertible in $k$, let $A$ be a cyclic central simple algebra of degree $n\geq 3$ over $k$, and let $E$ be an elliptic curve over $k$ such that $E[n]\cong \bZ/n\bZ\times_k \mu_n$. Then $V_E^A(k)\neq\emptyset$.
		\end{proposition}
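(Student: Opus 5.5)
By Proposition~\ref{prop:pgla-stabilizer}, $V_E^A\cong R_E^A/E[n]$, where $R_E^A=\operatorname{Isom}(\bP_{E,n},\SB(A))$ is a right torsor under $\Aut(\bP_{E,n})\cong\PGL_n$. The plan is therefore a twisting argument. A $k$-point of $V_E^A$ exists if and only if the class $[R_E^A]\in H^1(k,\PGL_n)$ lies in the image of
\[
H^1(k,E[n])\longrightarrow H^1(k,\Aut(\bP_{E,n}))
\]
induced by $h_n$. Indeed, the fiber of $R_E^A\to V_E^A$ over a $k$-point is an $E[n]$-torsor whose pushforward along $h_n$ is $R_E^A$; conversely, such a torsor produces a $k$-point of the quotient.

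Since $E[n]\cong\bZ/n\bZ\times_k\mu_n$, I would first fix coordinates via the Heisenberg description. Take $P_1$ generating the $\bZ/n\bZ$ factor and $P_2$ generating the $\mu_n$ factor. By \cite[Lemma 3.4]{fisher2010pfaffian}, used as in the proof of Proposition~\ref{picard-scheme-twisted}, one can choose $k$-rational coordinates on $\bP_{E,n}\cong\bP^{n-1}_k$ so that $h_n$ restricted to $E[n]$ becomes $\kappa_n$. This step requires checking that the relevant basis can be chosen over $k$, and that the identification $E[n]\cong\bZ/n\bZ\times\mu_n$ is compatible with the Weil pairing up to an automorphism. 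Both should follow because $P_1$ is a $k$-point and the $\mu_n$ factor acts diagonally on a basis of $H^0(E,\OO(n\cdot0_E))$ defined over $k$. This is the step I expect to be the main obstacle, since Fisher's normal form is stated over $\kbar$ and must be descended. Once it is done, $R_E^A\cong P^A$ and the question becomes whether $[A]$ lies in the image of
\[
H^1(k,\bZ/n\bZ)\times H^1(k,\mu_n)\xrightarrow{\kappa_{n*}}H^1(k,\PGL_n).
\]

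Next I would compute this image using the cyclic algebra description. Given $\chi\in H^1(k,\bZ/n\bZ)$ and $b\in k^\times/k^{\times n}=H^1(k,\mu_n)$, the twist of $M_n(k)$ by the cocycle $\kappa_n(\chi,b)$ is the cyclic algebra $(\chi,b)$. To see this, use the standard presentation: $\PGL_n$ is twisted by the commuting pair $S$ (cyclic shift, twisted by $\chi$) and $D$ (diagonal, twisted by the Kummer class of $b$). The resulting algebra is generated by the cyclic étale algebra $L_\chi$ together with an element $y$ satisfying $y^n=b$ and $y\ell y^{-1}=\sigma(\ell)$. This matches \cite[Construction 2.5.1]{gille2017central}. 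Equivalently, the connecting map $H^1(k,\bZ/n\bZ\times\mu_n)\to H^2(k,\mu_n)$ of the Heisenberg extension sends $(\chi,b)$ to $\chi\cup b$, which is the commutator pairing computed in Lemma~\ref{theta-lemma}.

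Finally, since $A$ is cyclic of degree $n$, there exist $\chi$ and $b$ with $A\cong(\chi,b)$. A twisted form of $\PGL_n$ is determined by its algebra, and $H^1(k,\PGL_n)$ classifies degree-$n$ algebras. Hence $[P^A]=\kappa_{n*}(\chi,b)$, so $R_E^A/E[n]$ has a rational point, and $V_E^A(k)\neq\emptyset$.
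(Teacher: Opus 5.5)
Your reduction matches the paper's. $V_E^A(k)\neq\emptyset$ if and only if $[R_E^A]$ lies in the image of $H^1(k,E[n])\to H^1(k,\PGL_n)$, and $\kappa_{n*}$ produces exactly the cyclic algebras $(\chi,b)$. The gap is the step you flagged, and it is not merely unproved but false in general. For the given identification $\varphi\colon\bZ/n\bZ\times_k\mu_n\cong E[n]$, there need not be $k$-rational coordinates with $h_n\circ\varphi=\kappa_n$.

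Here is why. For the $\mu_n$-factor $M$ to act by diagonal matrices in a $k$-basis, its projective action must lift to a linear $\mu_n$-action on $H^0(E,\OO(n\cdot 0_E))$. That means $\OO(n\cdot 0_E)$ must admit an $M$-linearization over $k$, equivalently be pulled back from a line bundle on $E/M$. The obstruction lies in $\mathrm{Ext}^1_k(\mu_n,\Gm)\cong H^1(k,\bZ/n\bZ)$. (Likewise, a $k$-linear lift of translation by $P_1$ has $n$th power a scalar whose class in $k^\times/k^{\times n}$ need not be trivial.)

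For odd $n$ both obstructions vanish, because $\OO(n\cdot 0_E)\cong\OO(M)\cong\OO(0_E+P_1+\cdots+(n-1)P_1)$, and your argument goes through. For even $n$ they need not vanish. Take $k=\mathbf{R}$, $n=4$, $E=\mathbf{C}/(\bZ\omega_1+\bZ i\omega_2)$ with $\omega_1,\omega_2>0$, $P_1=\omega_1/4$, and $M$ generated by $\omega_1/2+i\omega_2/4$. Complex conjugation acts on $M$ by inversion, so $E[4]\cong\bZ/4\bZ\times\mu_4$. Let $\psi\colon E\to E/M$. If $\psi^*\OO(R)\cong\OO(4\cdot 0_E)$, then every $z_0\in\psi^{-1}(R)$ satisfies $4z_0\equiv i\omega_2/2$. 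Hence $z_0-\bar z_0\in i\omega_2(\tfrac14+\tfrac12\bZ)$, which never lies in the lattice $\bZ\omega_1+\bZ(\omega_1/2+i\omega_2/4)$ of $E/M$. So $R$ is never real, and $\OO(4\cdot 0_E)$ has no $M$-linearization over $\mathbf{R}$.

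This discrepancy is exactly the class $\sigma$ in the paper's proof. By \cite[Proposition~2.14]{antieau-auel}, $Ob(\chi,\lambda)=(\chi+\sigma)\cup\lambda$ rather than $\chi\cup\lambda$.

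The repair is that you only need $h_n\circ\varphi$ and $\kappa_n$ to have the same image on $H^1$, and this holds without $k$-conjugacy.

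1. Fix a $k$-basis of $H^0(E,\OO(n\cdot 0_E))$, and adjust $\varphi$ by an automorphism so that the Weil pairing matches the commutator pairing of $\kappa_n$.
2. Then \cite[Lemma~3.4]{fisher2010pfaffian} gives $g\in\PGL_n(\kbar)$ with $h_n\circ\varphi=\mathrm{Ad}(g)\circ\kappa_n$.
3. Both homomorphisms are defined over $k$, so $g^{-1}\gamma(g)$ centralizes the image of $\kappa_n$, which is its own centralizer in $\PGL_n$. Hence $g^{-1}\gamma(g)=\kappa_n(c_\gamma)$ for a $1$-cocycle $c$ with values in $\bZ/n\bZ\times\mu_n$.
4. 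For any cocycle $x$, one gets $g^{-1}\,(h_n\varphi)(x_\gamma)\,\gamma(g)=\kappa_n(x_\gamma+c_\gamma)$. Therefore $(h_n\circ\varphi)_*(x)=\kappa_{n*}(x+c)$.
5. Translation by $[c]$ is a bijection of $H^1(k,\bZ/n\bZ\times\mu_n)$, so the two images agree, and your last two paragraphs finish the proof.

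The paper does exactly this in explicit form, by taking the torsor of class $(\chi-\sigma,\lambda)$.
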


		\begin{proof}
		Fix a $k$-group isomorphism $\varphi\colon \bZ/n\bZ\times_k\mu_n\xrightarrow{\sim}E[n]$, and let $P\in E[n](k)$ be the image of a generator of $\bZ/n\bZ\times_k\{1\}$. Let $h\colon E[n]\rightarrow\PGL_n$ be the embedding induced by the degree $n$ invertible sheaf $\LL\coloneqq \OO(0_E+P+\cdots+(n-1)P)$; see \eqref{eq:h_n}. By~\cite[Proposition~2.14]{antieau-auel}, there exists $\sigma\in H^1(k,\bZ/n\bZ)$ (which depends only on $E$, $n$, and $\varphi$) such that the induced map
        \[Ob\colon H^1(k,\bZ/n\bZ)\times H^1(k,\mu_n) \xlongrightarrow{\varphi_*}H^1(k,E[n])\xlongrightarrow{h_*} H^1(k,\PGL_n)\lhook\joinrel\longrightarrow\Br(k)[n]\] is given by $Ob(\chi,\lambda)=(\chi+\sigma)\cup \lambda$.
            
		Since the Brauer class of $A$ is cyclic, there exist $\chi\in H^1(k,\bZ/n\bZ)$ and $\lambda \in H^1(k,\mu_n)$ such that $[A]=\chi\cup \lambda$ in $\Br(k)$.
			Let $D$ be a right $E[n]$-torsor with class $(\chi-\sigma,\lambda)\in H^1(k,E[n])=H^1(k,\bZ/n\bZ)\times H^1(k,\mu_n)$, and let $D_h=D\times^{E[n]}\PGL_n$ be the $\PGL_n$-torsor obtained from $D$ by extension of structure group along $h$. By the choice of $D$, the image of the class $[D_h] \in H^1(k,\PGL_n)$ in $\Br(k)$ is
\[
Ob(\chi-\sigma,\lambda)=\chi\cup\lambda=[A].
\]
Thus $D_h$ is the $\PGL_n$-torsor corresponding to $A$, that is, with the
notation of Section~\ref{twisted-case}, $D_h\cong P^A$. Hence
\[
D\times^{E[n]}\bP^{n-1}_k
\cong
D_h\times^{\PGL_n}\bP^{n-1}_k
\cong
\SB(A).
\]
       On the other hand, $C=(D\times_kE)/E[n]$ is a genus $1$ curve such that $\Pic^0_{C/k}\cong E$. Embedding $E$ in $\bP^{n-1}_k$ as an elliptic normal curve via $\LL$ gives an $E[n]$-equivariant inclusion $\iota\colon E\hookrightarrow \bP^{n-1}_k$; see \eqref{eq:iota_n} and \eqref{eq:iota-h}. Twisting $\iota$ by the $E[n]$-torsor $D$ yields the desired twisted elliptic normal curve $C\hookrightarrow \SB(A)$. We conclude that $V_E^A(k)\neq\emptyset$.
		\end{proof}

		\subsection{Vanishing of the Brauer--Manin obstruction and proof of Theorem~\ref{local-global-thm}(1)}
		
		\begin{lemma}\label{sln-cyc}
			Let $n,d\geq 1$ be integers, and let $M_n$ be the $\SL_2(\bZ/dn\bZ)$-module $(\bZ/n\bZ)^2$, on which $\SL_2(\bZ/dn\bZ)$ acts by matrix multiplication via the natural reduction map $\SL_2(\bZ/dn\bZ)\to \SL_2(\bZ/n\bZ)$. Define the elements $\gamma_1,\gamma_2\in \SL_2(\bZ/dn\bZ)$ by 
			\[\gamma_1=\begin{pmatrix}
				1 & 1 \\
				0 & 1
			\end{pmatrix},\qquad \gamma_2=\begin{pmatrix}
				1 & 0 \\
				1 & 1
			\end{pmatrix}.\] 
			Then the pullback map \[(\gamma_1^*,\gamma_2^*)\colon H^1(\SL_2(\bZ/dn\bZ),M_n)\longrightarrow H^1(\bZ ,M_n)\oplus H^1(\bZ,M_n)\] is injective.
		\end{lemma}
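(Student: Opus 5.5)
The plan is a direct cocycle computation. It uses two facts: a $1$-cocycle on a group is determined by its values on a generating set, and $H^1(\bZ,M)\cong M/(\gamma-1)M$ when $1\in\bZ$ acts through $\gamma$.

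Set $N\coloneqq dn$. First I use that $\SL_2(\bZ/N\bZ)$ is generated by $\gamma_1$ and $\gamma_2$. This holds because $\SL_2(\bZ)$ is generated by the two elementary unipotent matrices, and the reduction map $\SL_2(\bZ)\to\SL_2(\bZ/N\bZ)$ is surjective (strong approximation for $\SL_2$, or elementary row reduction over the semilocal ring $\bZ/N\bZ$). Consequently, a $1$-cocycle $c\colon \SL_2(\bZ/N\bZ)\to M_n$ is uniquely determined by the pair $(c(\gamma_1),c(\gamma_2))$, by the cocycle identity $c(gh)=c(g)+g\,c(h)$.

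Next I identify the pullback maps. For $\gamma\in\SL_2(\bZ/N\bZ)$, a cocycle on $\bZ$ is determined by its value at $1$, so $H^1(\bZ,M_n)\cong M_n/(\gamma-1)M_n$, and $\gamma^*[c]$ corresponds to the class of $c(\gamma)$. Thus if $[c]$ lies in the kernel of $(\gamma_1^*,\gamma_2^*)$, then $c(\gamma_i)\in(\gamma_i-1)M_n$ for $i=1,2$. Computing, $(\gamma_1-1)(x,y)^T=(y,0)^T$ and $(\gamma_2-1)(x,y)^T=(0,x)^T$. Hence $c(\gamma_1)=(a,0)^T$ and $c(\gamma_2)=(0,b)^T$ for some $a,b\in\bZ/n\bZ$.

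The key step is to find a single $m\in M_n$ that simultaneously trivializes both values. Take $m\coloneqq(b,a)^T$. Then $(\gamma_1-1)m=(a,0)^T=c(\gamma_1)$ and $(\gamma_2-1)m=(0,b)^T=c(\gamma_2)$. The cocycle $c'(g)\coloneqq c(g)-(g-1)m$ is cohomologous to $c$ and vanishes on $\gamma_1$ and $\gamma_2$. By the generation statement, $c'$ vanishes identically, so $[c]=0$, which proves injectivity.

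I expect no real obstacle here. The only inputs are the generation of $\SL_2(\bZ/N\bZ)$ by $\gamma_1,\gamma_2$, which I cite as standard, and the observation that the images of $\gamma_1-1$ and $\gamma_2-1$ lie in complementary coordinate lines, which lets one $m$ work for both generators at once.
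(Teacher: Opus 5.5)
Your proof is correct and is essentially the paper's argument. Both rest on two facts: that $\gamma_1,\gamma_2$ generate $\SL_2(\bZ/dn\bZ)$, and that $M_n^{\gamma_1}+M_n^{\gamma_2}=M_n$, which is exactly why your single $m=(b,a)^T$ trivializes both values at once. The paper packages the same cocycle computation through the Mayer--Vietoris sequence for the free group $\langle c_1\rangle*\langle c_2\rangle$ mapping onto $\SL_2(\bZ/dn\bZ)$, followed by injectivity of inflation, whereas you work with cocycles on $\SL_2(\bZ/dn\bZ)$ directly.
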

		
		\begin{proof}  
			As $\bZ$ is a Euclidean domain, $\SL_2(\bZ)$ is generated by elementary matrices, and hence it is generated by the matrices $\begin{pmatrix}
				1 & 1 \\
				0 & 1
			\end{pmatrix}$ and $\begin{pmatrix}
				1 & 0 \\
				1 & 1
			\end{pmatrix}$. The reduction map $\SL_2(\bZ)\to \SL_2(\bZ/dn\bZ)$ being surjective (see e.g. \cite[Exercise 1.2.2(b)]{diamond2005first}), we deduce that $\SL_2(\bZ/dn\bZ)$ is generated by $\gamma_1$ and $\gamma_2$. 
			
			Let $F$ be the free group on two generators $c_1$ and $c_2$, and consider the surjective homomorphism $f\colon F\to \SL_2(\bZ/dn\bZ)$ given by $c_1\mapsto \gamma_1$ and $c_2\mapsto \gamma_2$. We view $M_n$ as an $F$-module via $f$. Since $M_n^{F}=0$, the Mayer--Vietoris sequence for $F=\langle c_1\rangle * \langle c_2 \rangle$ and the $F$-module $M_n$ reads
			\[0\longrightarrow M_n^{\langle c_1\rangle}\oplus M_n^{\langle c_2\rangle}\longrightarrow M_n\longrightarrow H^1(F,M_n)\xrightarrow{(c_1^*,c_2^*)} H^1(\bZ,M_n)\oplus H^1(\bZ,M_n)\longrightarrow 0\]
			see \cite[Exercise III.6(b)]{brown1994cohomology}.
			Since $M_n^{\langle c_1\rangle }=\langle (1,0)\rangle$ and $M_n^{\langle c_2\rangle}=\langle (0,1)\rangle$, the map $M_n^{\langle c_1\rangle }\oplus M_n^{\langle c_2\rangle}\to M_n$ is surjective, and hence the map $(c_1^*,c_2^*)$ is injective. We have a commutative square
			\[
			\begin{tikzcd}
				H^1(\SL_2(\bZ/dn\bZ),M_n) \arrow[r,"\text{$(\gamma_1^*,\gamma_2^*)$}"] \arrow[d,hook]  & H^1(\bZ,M_n)\oplus H^1(\bZ,M_n) \arrow[d,equal] \\
				H^1(F,M_n) \arrow[r,hook,"\text{$(c_1^*,c_2^*)$}"] & H^1(\bZ,M_n)\oplus H^1(\bZ,M_n),
			\end{tikzcd}
			\]
			where the left vertical arrow is an inflation map, and so it is injective by the inflation-restriction sequence. We conclude that the top horizontal map is injective, as desired.
		\end{proof}

		\begin{proposition}\label{brauer-vanishing}
			Let $k$ be a field of characteristic zero, let $A$ be a cyclic central simple algebra of degree $n\geq 3$ over $k$, and let $E$ be an elliptic curve over $k$ such that the image of $\Gamma_k\to \GL(E[2n](\kbar))$ contains $\SL(E[2n](\kbar))$. Then 
			\[\Br_{\nr}(V_E^A)=\Br_{1,\nr}(V_E^A)=\Br_0(V_E^A).\]
		\end{proposition}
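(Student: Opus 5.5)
The proof has two parts. First, $\Br_{\nr}(V_E^A)=\Br_{1,\nr}(V_E^A)$, i.e.\ the transcendental part vanishes. Second, the algebraic part $\Br_{1,\nr}(V_E^A)$ is $\Br_0(V_E^A)$, which we get from Corollary~\ref{brauer-corollary} combined with Lemma~\ref{sln-cyc}.

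\textbf{Transcendental part.} By Proposition~\ref{picard-scheme-twisted}, the geometric stabilizer of the $\SL(A)$-homogeneous space $V_E^A$ is $\mathrm{He}_n(\kbar)$. By Lemma~\ref{theta-lemma}(d), its Bogomolov multiplier is trivial. By the standard result (Bogomolov, and Harari's transcendental Brauer group computation for homogeneous spaces of simply connected groups with finite stabilizer), the image of $\Br_{\nr}(V_E^A)$ in $\Br(V_{E,\kbar}^A)$ injects into the Bogomolov multiplier. Hence $\Br_{\nr}=\Br_{1,\nr}$.

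\textbf{Setup for the algebraic part.} Since $k$ may not be a number field, we avoid using $\delta=0$ directly. It suffices to show that the image of $\Br_{1,\nr}(V_E^A)$ in $H^1(k,\cdual{H}^{\mathrm{ab}}_{\Bar v})$ is zero, because then exactness of \eqref{hochschild-serre-seq} gives $\Br_{1,\nr}\subset\Br_0$.

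Take $L=k(E[2n])$. This is Galois over $k$. Over $L$ we have $E[n]\cong\bZ/n\bZ\times\mu_n$, with $\mu_n\subset L$ by the Weil pairing, and $A_L$ remains cyclic. So Proposition~\ref{full-level-structure} gives $V_E^A(L)\neq\emptyset$.

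Next we check that the outer action of $\Gamma_k$ on $H_{\Bar v}\cong\mathrm{He}_n(\kbar)$ factors through $\Gal(L/k)$. Over $\kbar$ the stabilizer, with its Galois twisting, is the preimage in $\SL$ of $E[n]$ acting projectively. Its conjugation action is determined modulo inner automorphisms by the action on $H^{\mathrm{ab}}=E[n]$, by Lemma~\ref{theta-lemma}(c). Hence it factors through $k(E[n])\subset L$. The exponent of $H$ divides $2n$ by Lemma~\ref{theta-lemma}(b), and $\mu_{2n}\subset L$. Proposition~\ref{brauer-homogeneous} needs $\mu_e\subset k$; to meet this, I would apply it after base change to a field where the hypothesis holds, or note that only the cup-product description of Corollary~\ref{brauer-corollary} is needed.

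\textbf{Main step.} Let $\beta\in H^1(\Gal(L/k),E[n])$ be a class whose inflation comes from $\Br_{1,\nr}$. Let $\Gamma\subset\Gal(L/k)\subset\GL_2(\bZ/2n\bZ)$ be the image, which contains $\SL_2(\bZ/2n\bZ)$. Restrict $\beta$ to $\SL_2(\bZ/2n\bZ)$, acting on $M_n=E[n]$ through reduction mod $n$; this is Lemma~\ref{sln-cyc} with $d=2$.

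Consider $g=\gamma_1$, a transvection. Choose a basis $u_1,u_2$ of $H^{\mathrm{ab}}$ in which $g$ fixes $\Bar u_1$. Then $(H^{\mathrm{ab}})^g=\langle\Bar u_1\rangle$, since $g$ acts as $\begin{pmatrix}1&1\\0&1\end{pmatrix}$. By Lemma~\ref{theta-lemma}(e), the twisted action of $g$ sends a lift of $\Bar u_1$ to a conjugate of itself, so the conjugacy class of the lift is $g$-fixed. Its image generates $(H^{\mathrm{ab}})^g$. Corollary~\ref{brauer-corollary} then gives $\gamma_1^*\beta=0$, and symmetrically $\gamma_2^*\beta=0$. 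Lemma~\ref{sln-cyc} now shows that $\beta$ restricts to $0$ on $\SL_2(\bZ/2n\bZ)$.

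\textbf{Descending from $\SL_2$ to $\Gamma$.} The index $[\Gamma:\SL_2(\bZ/2n\bZ)]$ divides $|(\bZ/2n\bZ)^\times|$, which may not be prime to $n$, so restriction–corestriction does not apply directly. Instead, use inflation–restriction for $\SL_2\lhd\Gamma$: the kernel of restriction is $H^1(\Gamma/\SL_2,M_n^{\SL_2})$, and $M_n^{\SL_2}=0$. Hence $\beta=0$.

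\textbf{Main obstacle.} The delicate step is checking the hypothesis of Corollary~\ref{brauer-corollary}: that the natural map $(H/\mathrm{conj})^g\to(H^{\mathrm{ab}})^g$ hits a generator under the \emph{twisted} outer action, using Lemma~\ref{theta-lemma}(e). The roots-of-unity hypothesis is a secondary bookkeeping issue.
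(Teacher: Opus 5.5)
Your proposal follows the paper's proof essentially step for step. The transcendental part uses Bogomolov's theorem with Lemma~\ref{theta-lemma}(d). The algebraic part takes $L=k(E[2n])$ and uses Proposition~\ref{full-level-structure}, Lemma~\ref{theta-lemma}(b),(c),(e), Corollary~\ref{brauer-corollary} for the two transvections, Lemma~\ref{sln-cyc} with $d=2$, and an inflation--restriction argument using $E[n]^{\SL_2(\bZ/2n\bZ)}=0$ to remove the roots-of-unity assumption.

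The one thing to tighten is the order of the roots-of-unity bookkeeping. Over $k$ without $\zeta_{2n}$, Proposition~\ref{brauer-homogeneous} is not available. So you do not know a priori that an arbitrary class $\alpha$ in the image of $\Br_{1,\nr}(V_E^A)\to H^1(k,\cdual{H}^{\mathrm{ab}}_{\Bar v})$ is inflated from $\Gal(L/k)$, and your finite-level descent along $\SL_2\lhd\Gamma$ presupposes this. Your other suggestion, that ``only the cup-product description is needed'', is not a justification, since that description is only established under $\mu_e\subset k$.

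The paper's fix is what your ``base change'' option amounts to. Restrict $\alpha$ to $k'=k(\zeta_{2n})$: there Proposition~\ref{brauer-homogeneous} applies and $\Gal(L/k')=\SL_2(\bZ/2n\bZ)$, so your main step gives $\alpha_{k'}=0$. Then $H^1(k,\cdual{H}^{\mathrm{ab}}_{\Bar v})\to H^1(k',\cdual{H}^{\mathrm{ab}}_{\Bar v})$ is injective because $(\cdual{H}^{\mathrm{ab}}_{\Bar v})^{\Gamma_{k'}}=0$.

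Alternatively, you can keep your ordering. Apply Proposition~\ref{brauer-homogeneous} over $L$, where the outer action is trivial, to get $\alpha_L=0$. Hence $\alpha$ really is inflated from $\Gal(L/k)$, and your argument then goes through as written.
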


		\begin{proof}
        Let $\Bar{v}\in V_E^A(\kbar)$ be a geometric point, and let $H_{\Bar{v}}\subset \SL(A)(\kbar)$ be the stabilizer of $\Bar{v}$. By assumption and Proposition~\ref{prop:vea-stabilizer-ab}, the image of $\Gamma_k\to \GL(H^{\mathrm{ab}}_{\Bar{v}})$ contains $\SL(H^{\mathrm{ab}}_{\Bar{v}})$.
        
			We first show that $\Br_{\nr}(V_E^A)=\Br_{1,\nr}(V_E^A)$. By Proposition~\ref{picard-scheme-twisted}, we have a group isomorphism $H_{\Bar{v}}\cong \mathrm{He}_n(\kbar)$. By a theorem of Bogomolov \cite{bogomolov1987brauer} (see also \cite[Theorem 7.1]{colliot2007rationality}), the group $\Br_{\nr}((V_E^A)_{\kbar})$ is isomorphic to the Bogomolov multiplier of $H_{\Bar{v}}$, and hence it is trivial by Lemma~\ref{theta-lemma}(d). Thus $\Br_{\nr}(V_E^A)=\Br_{1,\nr}(V_E^A)$, as desired.

            We now show that $\Br_{1,\nr}(V_E^A)=\Br_0(V_E^A)$ when $k$ contains a primitive $2n$th root of unity $\zeta_{2n}$. Consider the finite Galois extension $L\coloneqq k(E[2n])$ of $k$. Since $k$ contains $\zeta_{2n}$, the cyclotomic character $\Gamma_k\to (\bZ/2n\bZ)^\times$ is trivial. Moreover, because the determinant of the Galois action on $E[2n](\kbar)$ is the mod-$2n$ cyclotomic character, the image of $\Gamma_k \to \GL(E[2n](\kbar))$ is contained in $\SL(E[2n](\kbar))$ and hence is equal to $\SL(E[2n](\kbar))$. By Proposition~\ref{full-level-structure}, we deduce $V_E^A(L)\neq\emptyset$. Since $k(E[n])\subset L$, the subgroup $\Gamma_L\subset \Gamma_k$ acts trivially on $E[n](\kbar)$, and hence, by Proposition~\ref{prop:vea-stabilizer-ab}, it acts trivially on $H_{\Bar{v}}^{\mathrm{ab}}$. By Lemma~\ref{theta-lemma}(c), the subgroup $\Gamma_L$ acts on $H_{\Bar{v}}$ by inner automorphisms, that is, the restriction of $\Gamma_k\to \mathrm{Out}(H_{\Bar{v}})$ to $\Gamma_L$ is trivial. It follows that the outer $\Gamma_k$-action on $H_{\Bar{v}}$ factors through $\Gal(L/k)$. Moreover, by Lemma~\ref{theta-lemma}(b), the exponent of $H_{\Bar v}$ divides $2n$, and under our current assumptions $\zeta_{2n}\in k$. Thus Corollary~\ref{brauer-corollary} applies.
			
			Let $\beta\in H^1(\Gal(L/k),\cdual{H}_{\Bar{v}}^{\mathrm{ab}})$ be such that $\mathrm{Inf}(\beta)$ is the image of an element of $\Br_{1,\nr}(V_E^A)$. We must show that $\beta=0$. Let $\bar u_1,\bar u_2\in H_{\Bar{v}}^{\mathrm{ab}}$ be a $(\bZ/n\bZ)$-basis, and choose lifts $u_1,u_2\in H_{\Bar{v}}$. Choose a $\bZ/2n\bZ$-basis $w_1,w_2$ of $E[2n](\Bar k)$ such that $2w_1,2w_2\in E[n](\Bar k)$ correspond, under the isomorphism $E[n](\Bar k)\cong H_{\Bar v}^{\mathrm{ab}}$, to $\bar u_1,\bar u_2$. With respect to the basis $w_1,w_2$, we identify $\Gal(L/k)$ with $\SL_2(\bZ/2n\bZ)$. Let $g_1,g_2\in\Gal(L/k)$ be the elements corresponding to the matrices
\[
\begin{pmatrix}
1 & 1 \\
0 & 1
\end{pmatrix},
\qquad
\begin{pmatrix}
1 & 0 \\
1 & 1
\end{pmatrix},
\]
respectively. Then, after reduction modulo $n$, their actions on $H_{\Bar v}^{\mathrm{ab}}$ are given by
\[
g_1(\bar u_1)=\bar u_1,\quad g_1(\bar u_2)=\bar u_1\bar u_2,\qquad g_2(\bar u_1)=\bar u_1\bar u_2,\quad g_2(\bar u_2)=\bar u_2.
\]
We show that $g_1^*\beta=0$. Choose an automorphism $\varphi_1\in\Aut(H_{\Bar{v}})$ representing the outer
action of $g_1$ on $H_{\Bar{v}}$. Its induced action on $H_{\Bar{v}}^{\mathrm{ab}}$ sends
$\bar u_1$ to $\bar u_1$ and $\bar u_2$ to $\bar u_1\bar u_2$. By
Lemma~\ref{theta-lemma}(e), the element $\varphi_1(u_1)$ is conjugate to
$u_1$. Therefore the conjugacy class of $u_1$ is fixed by $g_1$, and hence
defines an element of $(H_{\Bar{v}}/\mathrm{conj})^{g_1}$ whose image in
$(H_{\Bar{v}}^{\mathrm{ab}})^{g_1}$ is $\bar u_1$. Hence the image of the natural map $(H_{\Bar{v}}/\mathrm{conj})^{g_1}\to (H_{\Bar{v}}^{\mathrm{ab}})^{g_1}$ contains $\bar u_1$, and therefore generates the group $(H_{\Bar{v}}^{\mathrm{ab}})^{g_1}$. Now Corollary~\ref{brauer-corollary} implies that $g_1^*\beta=0$, as desired. Similarly, $g_2^*\beta=0$. Since $k$ contains $\zeta_{2n}$, the Weil pairing identifies
$\cdual{H}_{\Bar{v}}^{\mathrm{ab}}$ with $H_{\Bar{v}}^{\mathrm{ab}}$ as a
$\Gal(L/k)$-module. After choosing the basis
$\bar u_1,\bar u_2$, the action of $\Gal(L/k)$ on this module is the standard
action of $\SL_2(\bZ/2n\bZ)$ on $(\bZ/n\bZ)^2$ via
reduction modulo $n$. By Lemma~\ref{sln-cyc} (applied to $d=2$), the fact that $g_1^*\beta=0$ and $g_2^*\beta=0$ now implies that $\beta=0$. This proves Proposition~\ref{brauer-vanishing} under the assumption that $\zeta_{2n}\in k$.

			Now let $k$ be an arbitrary field of characteristic zero, and let $\alpha\in H^1(k,\cdual{H}_{\Bar{v}}^{\mathrm{ab}})$ be a class in the image of $\Br_{1,\nr}(V_E^A)\to H^1(k,\cdual{H}_{\Bar{v}}^{\mathrm{ab}})$. We must show that $\alpha=0$. Define $k'=k(\zeta_{2n})$. Since $\Gamma_{k'}$ is the kernel of the mod-$2n$ cyclotomic character, the image of $\Gamma_{k'}$ in $\GL(E[2n](\Bar k))$ is equal to $\SL(E[2n](\Bar k))$. Then $\alpha_{k'}$ belongs to the image of $\Br_{1,\nr}((V_E^A)_{k'})\to H^1(k',\cdual{H}_{\Bar{v}}^{\mathrm{ab}})$ and hence, by the previous part of the proof, it is trivial. We have the inflation-restriction sequence 
			\[0\longrightarrow H^1(\Gal(k'/k),(\cdual{H}^{\mathrm{ab}}_{\Bar{v}})^{\Gamma_{k'}})\longrightarrow H^1(k,\cdual{H}_{\Bar{v}}^{\mathrm{ab}})\longrightarrow H^1(k',\cdual{H}_{\Bar{v}}^{\mathrm{ab}}).\]
			The image of $\Gamma_{k'}\to \GL(\cdual{H}_{\Bar{v}}^{\mathrm{ab}})$ is equal to $\SL(\cdual{H}_{\Bar{v}}^{\mathrm{ab}})$. Therefore $(\cdual{H}^{\mathrm{ab}}_{\Bar{v}})^{\Gamma_{k'}}=0$, and hence the restriction map $H^1(k,\cdual{H}^{\mathrm{ab}}_{\Bar{v}})\to H^1(k',\cdual{H}^{\mathrm{ab}}_{\Bar{v}})$ is injective. We conclude that $\alpha=0$, as desired.
		\end{proof}

		\begin{proof}[Proof of Theorem~\ref{local-global-thm}(1)]	
			Write $X=\SB(A)$, where $A$ is a central simple algebra of degree $n$ over $k$. By assumption, the product of the $V_E^A(k_v)$, for all places $v$ of $k$, is not empty. By Proposition~\ref{brauer-vanishing}, we have $\Br_{\nr}(V_E^A)=\Br_0(V_E^A)$, and so the Brauer--Manin set of $V_E^A$ coincides with the product of the $V_E^A(k_v)$, for all places $v$ of $k$, and hence in particular it is not empty. By Theorem~\ref{demeio-thm}, the set $V_E^A(k)$ is dense in the Brauer--Manin set of $V_E^A$ with respect to the $v$-adic topologies, and hence it is also not empty.
		\end{proof}

		\subsection{Proof of Theorem~\ref{local-global-thm}(2)}

The following observation is standard.
        
\begin{lemma}\label{cm-large-image-exclusion}
Let $k$ be a field of characteristic zero, let $m\geq 3$ be an integer, and let
$E$ be an elliptic curve over $k$ with $j$-invariant $0$ or $1728$. Then the image of $\Gamma_k\to \GL(E[m](\kbar))$ does not contain $\SL(E[m](\kbar))$.
\end{lemma}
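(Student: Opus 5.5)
The idea is to use the extra automorphisms of $E$. If $\jj(E)\in\{0,1728\}$, then over $\kbar$ the curve $E$ has complex multiplication by $\bZ[\omega]$ (with $\omega$ a primitive cube root of unity) or by $\bZ[i]$. Write $\mathcal O$ for this order and $F=\mathrm{Frac}(\mathcal O)$, which is $\bQ(\sqrt{-3})$ or $\bQ(i)$. The automorphism group $\Aut_{E/\kbar}\cong\mathcal O^\times$ is $\mu_6$ or $\mu_4$, and it acts faithfully on $E[m](\kbar)$ because $m\geq 3$. For $\sigma\in\Gamma_k$ and $\alpha\in\Aut_{E/\kbar}$ we have
\[
\sigma\circ\alpha\circ\sigma^{-1}=\sigma(\alpha),
\]
where $\sigma(\alpha)$ is either $\alpha$ or $\alpha^{-1}$, according to whether $\sigma$ fixes $F\subset\kbar$ (we embed $F$ via the action on invariant differentials). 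So the image $G$ of $\Gamma_k$ in $\GL(E[m](\kbar))$ normalizes the subgroup $T$ generated by the image of $\mathcal O^\times$. More precisely, $G$ is contained in the normalizer $N$ of the image of $(\mathcal O/m)^\times$, a Cartan-type subgroup.

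The proof then reduces to showing that $\SL(E[m](\kbar))\cong\SL_2(\bZ/m\bZ)$ does not normalize $T$. I would pick a prime $p\mid m$, or $p=4$ when $m$ is a power of $2$ with $m\geq 4$, and reduce modulo $p$. Since $m\geq 3$ it is enough to handle $p\geq 3$ prime or $p=4$. The reduction $\SL_2(\bZ/m)\to\SL_2(\bZ/p)$ is surjective, and $T$ maps to a nonscalar subgroup: the element $\omega$, resp. $i$, has order $3$, resp. $4$, and is not scalar mod $p$. For $p\geq 3$ this holds because an order-$3$ or order-$4$ nonscalar element lies in $\mu_6$ or $\mu_4$ and acts without eigenvalue $1$ in the appropriate sense; I will check it directly using $\alpha^2+\alpha+1=0$, resp. $\alpha^2+1=0$, which cannot hold for a scalar $c$ with $c^2+c+1\equiv0$ unless $\alpha$ were actually equal to $c$, contradicting faithfulness.

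So it suffices to show that $\SL_2(\bZ/p)$ does not normalize the cyclic group $\langle\bar\alpha\rangle$ generated by a fixed nonscalar element $\bar\alpha$. The normalizer of $\langle\bar\alpha\rangle$ preserves the commutative subalgebra $(\bZ/p)[\bar\alpha]$, which is free of rank $2$ because $\bar\alpha$ is nonscalar. Hence the normalizer lies in the normalizer of a Cartan subalgebra, of order at most $2(p^2-1)$ or so. In any case it is a proper subgroup, since $\SL_2(\bZ/p)$ acts on nonscalar elements by conjugation with orbits of size greater than $2$. The simplest version of the argument: if $\SL_2$ normalized $\langle\bar\alpha\rangle$, every conjugate $g\bar\alpha g^{-1}$ would lie in $\{\bar\alpha,\bar\alpha^{-1}\}$. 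The conjugacy class of a nonscalar element in $\SL_2(\bZ/p)$ has size at least $p(p-1)/\gcd$-ish, certainly larger than $2$ for $p\geq 3$. For $p=4$ a direct check with $\begin{pmatrix}1&1\\0&1\end{pmatrix}$ works.

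The main obstacle is the bookkeeping of the Galois action on $\Aut$. We need $G$ to normalize $T$ inside $\GL(E[m])$, not merely $\Gal(\kbar/k)$ to permute automorphisms abstractly. This follows because both $\sigma$ and $\alpha$ act on $E[m](\kbar)$ and $\sigma(\alpha)(\sigma P)=\sigma(\alpha P)$. The case $\jj=0$ in characteristic zero needs no special treatment beyond this.
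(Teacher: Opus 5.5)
Your overall strategy is the paper's: the image of $\Gamma_k$ normalizes the image of $\Aut(E_{\kbar})$ in $\GL(E[m](\kbar))$, so it suffices to show that $\SL(E[m](\kbar))$ does not. The paper finishes with a single count at level $m$: the normalizer of the cyclic group generated by the CM automorphism has order at most $2m^2<|\SL_2(\bZ/m\bZ)|$. You instead pass to level $N=p$ (an odd prime dividing $m$) or $N=4$. That reduction is fine, but the step it rests on is not correctly justified.

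The gap is your claim that the CM automorphism $\alpha$ ($\omega$ or $i$) acts on $E[N](\kbar)$ by a non-scalar matrix. Your reason is that a scalar $c$ with $c^2+c+1\equiv 0$ (resp.\ $c^2+1\equiv 0$) would force $\alpha=c$, contradicting faithfulness. This is not valid. Faithfulness only says that $\Aut(E_{\kbar})\to\GL(E[N](\kbar))$ is injective. For $p\equiv 1\pmod 3$ (resp.\ $p\equiv 1\pmod 4$), multiplication by a scalar of order $3$ (resp.\ $4$) in $(\bZ/p\bZ)^\times$ is a faithful action satisfying the same relation. Nothing you wrote rules this out, and if $\bar\alpha$ were scalar, $\SL_2(\bZ/N\bZ)$ would normalize $\langle\bar\alpha\rangle$ and your argument would collapse.

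The missing input is arithmetic. An endomorphism of $E_{\kbar}$ killing $E[N]$ factors through $[N]$, so the kernel of $\mathrm{End}(E_{\kbar})\to\mathrm{End}(E[N](\kbar))$ is $N\mathcal O$. Since $\{1,\alpha\}$ is a $\bZ$-basis of $\mathcal O$, we have $\alpha-c\notin N\mathcal O$ for every integer $c$. Relatedly, $E[N](\kbar)$ is free of rank one over $\mathcal O/N\mathcal O$. This is what justifies the paper's choice of a basis in which the CM automorphism is $\begin{pmatrix}0&-1\\1&0\end{pmatrix}$ or $\begin{pmatrix}0&-1\\1&-1\end{pmatrix}$. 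Your unspecified ``direct check'' at $N=4$ needs the same fact in order to know what $\bar\alpha$ is.

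Once this is in place, replace the vague conjugacy-class estimate by a count. The centralizer of $\bar\alpha$ in $\SL_2(\bZ/N\bZ)$ lies in $\bigl((\bZ/N\bZ)[\bar\alpha]\bigr)^\times$, which has fewer than $N^2$ elements. If $\SL_2(\bZ/N\bZ)$ normalized $\langle\bar\alpha\rangle$, the centralizer would have index at most $2$. That gives order at least $p(p^2-1)/2\geq p^2$ for $N=p$, and at least $24>16$ for $N=4$, a contradiction. This is exactly the paper's argument, run at level $N$ instead of $m$.
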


\begin{proof}
    Choose a $\bZ/m\bZ$-basis of $E[m](\kbar)$ in which a complex multiplication automorphism of $E_{\kbar}$ is represented by
\[
        \begin{pmatrix}0&-1\\ 1&0\end{pmatrix}
        \quad\text{if } \jj(E)=1728
        \qquad \text{or} \qquad
        \begin{pmatrix}0&-1\\ 1&-1\end{pmatrix}
        \quad\text{if } \jj(E)=0.
\]
The action of $\Gamma_k$ on $E[m](\kbar)$ normalizes the image of
$\Aut(E_{\kbar})$ in $\GL(E[m](\kbar))$, and hence normalizes the cyclic subgroup generated by the chosen complex multiplication automorphism. After choosing the above basis, the image of $\Gamma_k$ in $\GL_2(\bZ/m\bZ)$ is contained in the normalizer of the corresponding Cartan subgroup
\[
C_{1728}(m)=
\left\{
\begin{pmatrix}a&-b\\ b&a\end{pmatrix}
:
a,b\in \bZ/m\bZ,\ 
a^2+b^2\in(\bZ/m\bZ)^\times
\right\},
\]
or
\[
C_0(m)=
\left\{
\begin{pmatrix}a&-b\\ b&a-b\end{pmatrix}
:
a,b\in \bZ/m\bZ,\ 
a^2-ab+b^2\in(\bZ/m\bZ)^\times
\right\},
\]
respectively. 
The centralizer of the chosen CM automorphism is contained in the corresponding Cartan subgroup, whose order is evidently at most $m^2$. The normalizer maps to the automorphism group of the cyclic subgroup generated by the CM automorphism, which has order at most $2$, and the kernel of this homomorphism is the centralizer of the CM automorphism. It follows that the normalizer has order at most $2m^2$. On the other hand, for $m\geq 4$ one has
\[
|\SL_2(\bZ/m\bZ)|
=
m^3\prod_{\ell\mid m}\left(1-\ell^{-2}\right)
\geq \frac{m^3}{\zeta(2)}
=\frac{6m^3}{\pi^2}
>2m^2.
\]
For $m=3$, one has $|\SL_2(\bZ/3\bZ)|=24>18=2\cdot 3^2$. Hence, for all $m\geq 3$, the image of $\Gamma_k\to \GL(E[m](\kbar))$ does not contain $\SL(E[m](\kbar))$, as desired.
\end{proof}

    In view of Lemma~\ref{cm-large-image-exclusion}, the large-image hypothesis in Theorem~\ref{local-global-thm}(2) excludes the cases $j_0=0$ and $j_0=1728$. Hence $\Aut_{E/k}\cong\mu_2$, and all twists of $E$ with $j$-invariant $j_0$ are quadratic twists. We will deduce Theorem~\ref{local-global-thm}(2) from Theorem~\ref{local-global-thm}(1) by twisting $E$ by a suitable quadratic character. The next two lemmas serve to ensure that the open-image condition is preserved under twisting.
        
		\begin{lemma}\label{image-of-twist-group-theory}
			Let $n\geq 2$ be an integer, let $f\colon G\to \SL_2(\bZ/n\bZ)$ be a surjective homomorphism, let $\chi\colon G\to \{\pm 1\}$ be a homomorphism, and let $\chi\otimes f\colon G\to \SL_2(\bZ/n\bZ)$ be the homomorphism sending $g$ to $(\chi(g)\mathrm{I}_2)\cdot f(g)$, where $\mathrm{I}_2\in \SL_2(\bZ/n\bZ)$ is the identity matrix. Then $\chi\otimes f$ is surjective.
		\end{lemma}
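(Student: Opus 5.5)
The plan is to show that the image $I\coloneqq(\chi\otimes f)(G)$ contains a generating set of $\SL_2(\bZ/n\bZ)$. The homomorphism $\chi\otimes f$ is well defined because $\chi(g)\mathrm{I}_2$ is central and has determinant $1$. If $n=2$, then $-\mathrm{I}_2=\mathrm{I}_2$, so $\chi\otimes f=f$ and there is nothing to prove; assume $n\geq 3$ from now on. As in the proof of Lemma~\ref{sln-cyc}, $\SL_2(\bZ/n\bZ)$ is generated by
\[
\gamma_1=\begin{pmatrix}1&1\\0&1\end{pmatrix}\quad\text{and}\quad\gamma_2=\begin{pmatrix}1&0\\1&1\end{pmatrix},
\]
since elementary matrices generate $\SL_2(\bZ)$ and $\SL_2(\bZ)\to\SL_2(\bZ/n\bZ)$ is surjective. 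It therefore suffices to show that $\gamma_1,\gamma_2\in I$.

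Take $g\in G$ with $f(g)=\gamma_1$; this is possible by surjectivity of $f$. Then $(\chi\otimes f)(g)=\pm\gamma_1$. If the sign is $+$, we are done. If the sign is $-$, we use $g^2$ instead. Since $\chi(g)^2=1$, we get
\[
(\chi\otimes f)(g^2)=\gamma_1^2=\begin{pmatrix}1&2\\0&1\end{pmatrix}.
\]
The same reasoning applies to any element: for every $g\in G$, $(\chi\otimes f)(g^2)=f(g)^2$, so $I$ contains every square $s^2$ with $s\in\SL_2(\bZ/n\bZ)$.

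We now split according to the parity of $n$. If $n$ is odd, then $2$ is invertible modulo $n$, so $\gamma_1$ is a power of $\gamma_1^2$; hence $\gamma_1\in I$, and likewise $\gamma_2\in I$. The main obstacle is $n$ even, where squares only give $\gamma_i^2$. In this case we look for square roots of $\gamma_1$ itself. A natural candidate is the image of $\begin{pmatrix}1&1\\-1&0\end{pmatrix}\in\SL_2(\bZ)$, which has order $6$. Its square is $\begin{pmatrix}0&1\\-1&-1\end{pmatrix}$, which is not unipotent, so this does not directly produce $\gamma_1$. A cleaner route is to use the two-sided relation $(\chi\otimes f)(g)=\chi(g)f(g)$ together with the fact that $-\mathrm{I}_2$ is a product of elements of $\SL_2$. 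Concretely, if $-\mathrm{I}_2\in I$, then $\pm\gamma_1\in I$ forces $\gamma_1\in I$. And $-\mathrm{I}_2=s^2$ for $s=\begin{pmatrix}0&-1\\1&0\end{pmatrix}$, so $-\mathrm{I}_2$ does lie in $I$ by the square observation above. Hence $\gamma_1,\gamma_2\in I$ for every $n$, and the parity case split is not even needed.

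Since $\gamma_1$ and $\gamma_2$ generate $\SL_2(\bZ/n\bZ)$, it follows that $I=\SL_2(\bZ/n\bZ)$, that is, $\chi\otimes f$ is surjective.
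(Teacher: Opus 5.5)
Your proof is correct, and it reaches the key point by a more elementary route than the paper.

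Both arguments show that $-\mathrm{I}_2$ lies in the image $I$ of $\chi\otimes f$. Once that holds, every $t\in\SL_2(\bZ/n\bZ)$ is in $I$, because $(\chi\otimes f)(g)=\pm t$ for any $g$ with $f(g)=t$.

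The paper gets $-\mathrm{I}_2$ as follows. It passes to $H=\ker\chi$ and notes that $f(H)$ has index at most $2$. It then uses the presentation $\SL_2(\bZ)=\langle S,T\mid S^4=1,(ST)^3=S^2\rangle$ to compute $\SL_2(\bZ)^{\mathrm{ab}}\cong\bZ/12\bZ$, with $-\mathrm{I}_2$ mapping to the class of $T^6$. Hence $\SL_2(\bZ/n\bZ)$ has at most one index-$2$ subgroup, and that subgroup contains $-\mathrm{I}_2$.

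You instead observe that $g^2\in\ker\chi$ for every $g$, so $I$ contains every square $f(g)^2$. Then $-\mathrm{I}_2=s^2$ with $s=\begin{pmatrix}0&-1\\1&0\end{pmatrix}$. This is the elementary fact that a subgroup of index at most $2$ contains all squares. It avoids the abelianization computation entirely, and it works verbatim for $\SL_2$ over any commutative ring, for instance $\bZ_\ell$ or $\widehat{\bZ}$. The paper's route gives the extra information that the index-$2$ subgroup, if it exists, is unique, but that is not needed here.

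Your write-up can be trimmed considerably. The case $n=2$, the parity split, the abandoned order-$6$ candidate, and the reduction to the generators $\gamma_1,\gamma_2$ are all unnecessary once you know $-\mathrm{I}_2\in I$.
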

		
		\begin{proof}
			Let $H\coloneqq \ker(\chi)\subset G$. Then $[G:H]\leq 2$ and the restrictions of $\chi\otimes f$ and $f$ to $H$ coincide. Thus $(\chi\otimes f)(H)=f(H)$ has index $\leq 2$ in $\SL_2(\bZ/n\bZ)$. 
			
			Any index $2$ subgroup of $\SL_2(\bZ/n\bZ)$ contains $[\SL_2(\bZ/n\bZ),\SL_2(\bZ/n\bZ)]$. Since the reduction homomorphism $\SL_2(\bZ)\to \SL_2(\bZ/n\bZ)$ is surjective, $\SL_2(\bZ/n\bZ)^{\mathrm{ab}}$ is a quotient of $\SL_2(\bZ)^{\mathrm{ab}}$. It is well known (see e.g. \cite[p. 81]{serre1980trees}), that $\SL_2(\bZ)$ admits the presentation
			\[\SL_2(\bZ)=\langle S, T\, |\, S^4=1, (ST)^3=S^2\rangle,\qquad S=\begin{pmatrix}
				0 & 1\\
				-1 & 0
			\end{pmatrix},\quad T=\begin{pmatrix}
				1 & 0\\
				1 & 1
			\end{pmatrix}.\] In particular, $\SL_2(\bZ)^{\mathrm{ab}}$ is cyclic of order $12$, generated by the coset of $T$. Moreover, the coset of $-\mathrm{I}_2$ in $\SL_2(\bZ)^{\mathrm{ab}}$ is equal to the coset of $T^6$, and hence it has order $2$. As $\SL_2(\bZ/n\bZ)$ is a quotient of $\SL_2(\bZ)$, it follows that there exists at most one subgroup of index $2$ in $\SL_2(\bZ/n\bZ)$, and if it exists, it contains $-\mathrm{I}_2$.
			
			If $\SL_2(\bZ/n\bZ)$ has no subgroup of index $2$, then $f(H)=\SL_2(\bZ/n\bZ)$ and the proof is complete. If $\SL_2(\bZ/n\bZ)$ has a subgroup of index $2$, then it must be $f(H)$. We deduce that $-\mathrm{I}_2\in f(H)$, and hence the image of $\chi\otimes f$ contains $-\mathrm{I}_2$. Since $f$ is surjective, this implies that $\chi\otimes f$ is also surjective.
		\end{proof}

\begin{lemma}\label{image-of-twist}
Let $k$ be a field, let $n\geq 1$ be an integer such that $2n$ is invertible in $k$, let $E$ be an elliptic curve over $k$, and let $E'$ be a quadratic twist of $E$. If the image of $\rho\colon \Gamma_k\to \GL(E[2n](\kbar))$ contains $\SL(E[2n](\kbar))$, then the image of $\rho'\colon \Gamma_k\to \GL(E'[2n](\kbar))$ contains $\SL(E'[2n](\kbar))$.
\end{lemma}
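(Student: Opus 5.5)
A quadratic twist $E'$ of $E$ is determined by a character $\chi\colon\Gamma_k\to\{\pm1\}$. There is an isomorphism $\psi\colon E_{\kbar}\xrightarrow{\sim}E'_{\kbar}$, defined over the corresponding quadratic extension, with $\sigma(\psi)=\psi\circ[\chi(\sigma)]$ for all $\sigma\in\Gamma_k$. Transporting a $\bZ/2n\bZ$-basis of $E[2n](\kbar)$ along $\psi$ gives a basis of $E'[2n](\kbar)$. In these bases we have
\[
\rho'(\sigma)=\chi(\sigma)\,\rho(\sigma)\qquad\text{for all }\sigma\in\Gamma_k,
\]
since $[-1]$ acts as $-\mathrm{I}_2$ on torsion. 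So $\rho'=\chi\otimes\rho$ in the notation of Lemma~\ref{image-of-twist-group-theory}, and the plan is to reduce to that lemma. The only catch is that the lemma assumes the representation lands in $\SL_2$ and is surjective onto it, whereas here $\rho$ lands in $\GL_2(\bZ/2n\bZ)$ and we only know its image contains $\SL_2(\bZ/2n\bZ)$.

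To reduce, I restrict to $G\coloneqq\rho^{-1}(\SL(E[2n](\kbar)))$. Since $\det\rho$ is the mod-$2n$ cyclotomic character (Weil pairing), $G=\Gamma_{k(\zeta_{2n})}$. By hypothesis $f\coloneqq\rho|_G\colon G\to\SL_2(\bZ/2n\bZ)$ is surjective. Multiplying by the scalar $\chi(\sigma)=\pm1$ does not change the determinant, so $\det\rho'=\det\rho$, and $\rho'|_G=\chi|_G\otimes f$ also lands in $\SL_2$. Lemma~\ref{image-of-twist-group-theory}, applied with $n$ replaced by $2n$ and with $\chi|_G$ in place of $\chi$, then shows $\rho'(G)=\SL_2(\bZ/2n\bZ)$. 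Hence the image of $\rho'$ contains $\SL(E'[2n](\kbar))$.

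The main point needing care is the first step: checking that the twisting cocycle really acts by the scalar $\chi(\sigma)$ on $2n$-torsion, so that the identification $\rho'=\chi\otimes\rho$ holds as literal matrices in compatible bases and not merely up to conjugacy. This follows from writing $E'$ as the twist of $E$ by the cocycle $\sigma\mapsto[\chi(\sigma)]\in\Aut_{E/k}$. The rest is formal.
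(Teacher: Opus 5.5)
Your proposal is correct and follows essentially the same route as the paper. Both arguments restrict to $\Gamma_{k(\zeta_{2n})}$, where $\det\rho$ (the mod-$2n$ cyclotomic character) is trivial, so that $\rho$ surjects onto $\SL(E[2n](\kbar))$ there. Both then use an isomorphism $E_{\kbar}\cong E'_{\kbar}$ to identify $\rho'$ with $\chi\otimes\rho$ and conclude by Lemma~\ref{image-of-twist-group-theory}.
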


\begin{proof}
Set $K=k(\zeta_{2n})$. By the Weil pairing, the homomorphisms $\rho$ and $\rho'$ restrict to
\[
\rho|_{\Gamma_K}\colon \Gamma_K\longrightarrow \SL(E[2n](\kbar)),
\qquad
\rho'|_{\Gamma_K}\colon \Gamma_K\longrightarrow \SL(E'[2n](\kbar)).
\]
We first claim that $\rho(\Gamma_K)=\SL(E[2n](\kbar))$. Indeed, by assumption the image of $\rho$ contains
$\SL(E[2n](\kbar))$. If $s\in \SL(E[2n](\kbar))$, choose
$\gamma\in\Gamma_k$ such that $\rho(\gamma)=s$. Since the determinant of
$\rho(\gamma)$ is the mod-$2n$ cyclotomic character and $\det(s)=1$, we have
$\gamma\in\Gamma_K$. Thus $\rho(\Gamma_K)=\SL(E[2n](\kbar))$, as desired.

Let $\chi\colon \Gamma_k\to\{\pm1\}$ be the quadratic character corresponding
to the twist $E'$. Choose an isomorphism $\iota\colon E_{\kbar}\xrightarrow{\sim} E'_{\kbar}$, and let $\iota_{2n}\colon E[2n](\kbar)\xrightarrow{\sim} E'[2n](\kbar)$ be the induced isomorphism. Under this identification, the two Galois
representations are related by the identity $\iota_{2n}^{-1}\rho'(\gamma)\iota_{2n}
=
\chi(\gamma)\rho(\gamma)$ for all $\gamma\in\Gamma_k$, where we view $\chi(\gamma)$ as the scalar matrix $\pm \mathrm{I}_2$. 
Restricting to $\Gamma_K$, we obtain
\[
\iota_{2n}^{-1}\rho'(\Gamma_K)\iota_{2n}
=
(\chi|_{\Gamma_K}\otimes \rho|_{\Gamma_K})(\Gamma_K),
\]
where by definition $(\chi|_{\Gamma_K}\otimes \rho|_{\Gamma_K})(\gamma)
=
\chi(\gamma)\rho(\gamma)$ for every $\gamma\in\Gamma_K$. Since $\rho(\Gamma_K)=\SL(E[2n](\kbar))$, Lemma~\ref{image-of-twist-group-theory} implies that $\chi|_{\Gamma_K}\otimes \rho|_{\Gamma_K}$ is also surjective onto $\SL(E[2n](\kbar))$. Hence
$\rho'(\Gamma_K)=\SL(E'[2n](\kbar))$, and in particular, the image of $\rho'$ contains $\SL(E'[2n](\kbar))$.
\end{proof}

		\begin{proof}[Proof of Theorem~\ref{local-global-thm}(2)]	
       Write $X=\SB(A)$, where $A$ is a central simple algebra of degree $n$ over $k$. By Lemma~\ref{cm-large-image-exclusion}, we may assume that $j_0\notin\{0,1728\}$. Let $S$ be the finite set of places $v$ of $k$ such that $A_{k_v}$ is not split. For every $v\in S$, let $C_v\subset \SB(A_{k_v})$ be a twisted elliptic normal curve such that $\jj(\Pic^0_{C_v/k_v})=j_0$, and define $E_v\coloneqq \Pic^0_{C_v/k_v}$. Then $\jj(E_v)=j_0$ and $C_v$ defines a $k_v$-point of $V_{E_v}^{A_{k_v}}$ (see \eqref{vea-points}), so that $V_{E_v}^{A_{k_v}}(k_v)\neq\emptyset$. Since $\jj(E)=j_0\notin\{ 0, 1728\}$, the automorphism group scheme $\Aut_{E/k}$ is isomorphic to $\mu_2$. For every $v\in S$, the elliptic curves $E_v$ and $E_{k_v}$ have the same $j$-invariant and hence become isomorphic over $\kbar_v$, so that $E_v$ defines an element in $H^1(k_v,\Aut_{E_{k_v}/k_v})=H^1(k_v,\mu_2)$. By weak approximation for $\bA^1_k$, the natural map $
k^\times/k^{\times 2}\to
\prod_{v\in S} k_v^\times/k_v^{\times 2}$ is surjective. Equivalently, by Kummer theory, the natural map $H^1(k,\mu_2)\to \prod_{v\in S}H^1(k_v,\mu_2)$ 
is surjective. It follows that there exists a quadratic twist $E'$ of $E$ such that $\jj(E')=j_0$ and $(E')_{k_v}\cong E_v$ for every $v\in S$. 

By assumption, the image of $\Gamma_k\to \GL(E[2n](\kbar))$ contains $\SL(E[2n](\kbar))$. By Lemma~\ref{image-of-twist}, the image of $\Gamma_k\to \GL(E'[2n](\kbar))$ contains $\SL(E'[2n](\kbar))$. Moreover, we have $V_{E'_{k_v}}^{A_{k_v}}(k_v)\neq\emptyset$ for every place $v$ of $k$: indeed, if $v\in S$ then $V_{E'_{k_v}}^{A_{k_v}}\cong V_{E_v}^{A_{k_v}}$ has a $k_v$-point corresponding to $C_v$, and if $v\notin S$ then $A_{k_v}$ is split, so that $\SB(A_{k_v})\cong \bP^{n-1}_{k_v}$, and the elliptic curve $E'_{k_v}$ embeds as an elliptic normal curve in $\bP^{n-1}_{k_v}$ via any invertible sheaf of degree $n$. Thus $E'$ satisfies the assumption of Theorem~\ref{local-global-thm}(1), and hence $V_{E'}^A(k)\neq\emptyset$. As $\jj(E')=j_0$, we have a morphism $V_{E'}^A\to V_{j_0}^A$ (see \eqref{bigdiagram-twisted}), and hence $V_{j_0}^A(k)\neq \emptyset$, as desired.
\end{proof}

\section{Proof of Theorem~\ref{weak-approximation-thm}}
\label{section-fixed-j-inv}

        In this section, we study the Brauer--Manin obstruction to weak approximation on the Hilbert scheme of twisted elliptic normal curves, leading up to a proof of Theorem~\ref{weak-approximation-thm}.
		
		Let $k$ be a number field. Recall that a $k$-variety $X$ is said to satisfy \emph{weak approximation} if for every finite set $S$ of places of $k$, the image of the diagonal map $X(k)\to \prod_{v \in S} X(k_v)$ is dense in the product of the $v$-adic topologies. If $X(k_v)\neq\emptyset$ for every place (v) of $k$, this is equivalent to the density of the diagonal image of $X(k)$ in $\prod_{v\in\Omega_k}X(k_v)$ equipped with the product topology.
		
		\begin{lemma}\label{lemma-w}
			Let $k$ be a number field, let $A$ be a central simple algebra of degree $n\geq 3$ over $k$, let $V\coloneqq  V(n)^A-(f^A)^{-1}(\{0,1728\})$ (cf. \eqref{bigdiagram-twisted}), let $t$ be the standard coordinate on $\bA^1_k$, let $U\subset \bA^1_k$ be a dense open subscheme, let $\sE\to U$ be a family of elliptic curves, and consider a Cartesian square
			\[
			\begin{tikzcd}
				W \arrow[r,"\varphi"] \arrow[d,"\pi"] & V \arrow[d,"(J^A)|_V"] \\
				U \arrow[r,"s"] & \sM_{1,1}, 
			\end{tikzcd}
			\]
			where $s$ is the morphism corresponding to $\sE$. Assume that the image of $\Gamma_{k(t)} \to \GL(\sE_{k(t)}[2n](\overline{k(t)}))$ contains $\SL(\sE_{k(t)}[2n](\overline{k(t)}))$. 
			Then $\Br_{\nr}(W)=\Br_0(W)$. Furthermore, if $W(k_v)\neq\emptyset$ for every $v\in \Omega_k$, then $W$ satisfies weak approximation.
		\end{lemma}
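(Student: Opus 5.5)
The plan is to prove the two assertions in order. The vanishing of the unramified Brauer group is obtained by reducing to the generic fibre and then controlling residues along fibres of $\pi$. Weak approximation then follows from the fibration method (Theorem~\ref{harari-thm}), combined with Demeio's Theorem~\ref{demeio-thm} on the fibres.

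\emph{Step 1: the generic fibre.} Let $\eta=\Spec k(t)$ and $E_\eta\coloneqq \sE_{k(t)}$. Since the square is Cartesian, the generic fibre $W_\eta$ is $V_{E_\eta}^{A_{k(t)}}$. This is a homogeneous space under $\SL(A)_{k(t)}$, with geometric stabilizer $\mathrm{He}_n$ by Proposition~\ref{picard-scheme-twisted}. The field $k(t)$ has characteristic zero and $A_{k(t)}$ is cyclic, since $A$ is cyclic over the number field $k$. The large image hypothesis on $E_\eta[2n]$ therefore lets us apply Proposition~\ref{brauer-vanishing}, giving
\[
\Br_{\nr}(W_\eta)=\Br_0(W_\eta).
\]
Let $\beta\in\Br_{\nr}(W)$. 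Its restriction to $W_\eta$ is unramified, so it equals $\pi^*\gamma$ for some $\gamma\in\Br(k(t))$. It remains to show that $\gamma$ is unramified at every closed point $P\in\bP^1_k$; then $\gamma\in\Br(\bP^1_k)=\Br(k)$, and hence $\beta\in\Br_0(W)$.

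\emph{Step 2: residues of $\gamma$.} I choose a smooth projective compactification $\Bar{\pi}\colon W^c\to\bP^1_k$ of $\pi$. Over $K=k(\SB(A))$ the algebra $A$ splits, so Proposition~\ref{prop:pgla-stabilizer} gives a $K(t)$-point of $W_\eta$. By properness this gives a section of $W^c_K\to\bP^1_K$. Lemma~\ref{skorobogatov-criterion} then shows that every fibre $W^c_P$ is split; in particular it has an irreducible component $D$ of multiplicity one that is geometrically integral over $k(P)$. The class $\Bar{\pi}^*\gamma$ is unramified along $D$, because $\beta$ is unramified on $W$ and hence on $W^c$. By the standard residue compatibility for pullbacks, the residue of $\Bar{\pi}^*\gamma$ along $D$ is the image of $\partial_P(\gamma)\in H^1(k(P),\bQ/\bZ)$ in $H^1(k(D),\bQ/\bZ)$. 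This restriction map is injective because $k(P)$ is algebraically closed in $k(D)$. Hence $\partial_P(\gamma)=0$ for every $P$, which proves $\Br_{\nr}(W)=\Br_0(W)$.

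\emph{Step 3: weak approximation.} Since $\Br(W^c)=\Br_{\nr}(W)=\Br_0(W)$, the sequence \eqref{eq:albert-brauer-hasse-noether} shows that the Brauer--Manin set of $W^c$ is all of $W^c(k_\Omega)$. It therefore suffices to show that $W^c(k)$ is dense in its Brauer--Manin set; density for $W$ then follows, since $W$ is a dense open subset of the smooth variety $W^c$, so $W(k_v)$ is dense in $W^c(k_v)$. I check the hypotheses of Theorem~\ref{harari-thm} for $\pi$:
\begin{itemize}
\item[(i)] All fibres are split, by Step 2.
\item[(ii)] The generic fibre is smooth and geometrically integral, with a $\kbar(t)$-point because $A_{\kbar}$ is split.
\item[(iii)] The geometric generic fibre is a compactification of a homogeneous space of $\SL_n$, hence unirational, so its Picard group is torsion-free and its Brauer group is finite, as in the proof of Theorem~\ref{mainthm}.
\item[(iv)] For $u\in U(k)$, the fibre $W_u=V^A_{\sE_u}$ is an $\SL(A)$-homogeneous space with solvable stabilizer. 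Whenever the specialization $\sE_u$ still has $\Gamma_k$-image containing $\SL(\sE_u[2n](\kbar))$, Proposition~\ref{brauer-vanishing} gives $\Br_{\nr}(W_u)=\Br_0(W_u)$. Theorem~\ref{demeio-thm} then gives density of $W_u(k)$ in $W_u(k_\Omega)$, which is the Brauer--Manin set of $W_u$. It also gives density of $W_u^c(k)$ in the Brauer--Manin set of $W_u^c$.
\end{itemize}

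The main obstacle is (iv). The large-image condition holds only for $u$ in a Hilbert subset of $U(k)$, obtained by Hilbert irreducibility applied to the finite Galois cover $U'\to U$ trivializing $\sE[2n]$. It does not hold on a Zariski open set. I would handle this by invoking Harari's fibration theorem in its original form \cite[Th\'eor\`eme 4.2.1]{harari1994methode}, which only requires the fibre hypothesis on a Hilbert subset. Alternatively, one can re-run the proof of \cite[Proposition 3.1.1]{harari1997fleches}, noting that the fibres it produces can be chosen inside any prescribed Hilbert subset. This yields the density of $W^c(k)$ in $W^c(k_\Omega)$, and hence weak approximation for $W$ whenever $W(k_v)\neq\emptyset$ for all $v$.
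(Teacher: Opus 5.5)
Your proof is correct. Steps 1--2 follow the paper. The paper likewise uses Proposition~\ref{brauer-vanishing} on the generic fibre to get $\Br(X)=\Br_{\mathrm{vert}}(X/\bP^1_k)$, and gets split fibres from Lemma~\ref{skorobogatov-criterion} over $K=k(\SB(A))$. It then quotes Colliot-Th\'el\`ene--Skorobogatov, Corollary 11.1.6(ii), to conclude $\Br_{\mathrm{vert}}=\Bar{\pi}^*\Br(\bP^1_k)=\Br_0(X)$. That citation is exactly the residue computation you carry out by hand along a multiplicity-one geometrically integral component.

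The real difference is in Step 3, and the ``main obstacle'' you identify is not actually there.
\begin{itemize}
\item[(a)] Hypothesis (iv) of Theorem~\ref{harari-thm} asks only that $(W_u)^c(k)$ be dense in the Brauer--Manin set of $(W_u)^c$. It does not ask for density in all of $(W_u)^c(k_\Omega)$.
\item[(b)] By Corollary~\ref{solvable-stabilizers}, every fibre $W_u=V^A_{\sE_u}$ with $u\in U(k)$ is an $\SL(A)$-homogeneous space with finite solvable geometric stabilizer. This holds whatever the Galois image of $\sE_u[2n]$ is.
\item[(c)] So Theorem~\ref{demeio-thm} supplies (iv) for every $u$ in the Zariski-open set $U$, with no Hilbert subset needed.
\end{itemize}
The paper therefore applies Theorem~\ref{harari-thm} exactly as stated, and gets $X(k)$ dense in the Brauer--Manin set of $X$. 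The vanishing $\Br(X)=\Br_0(X)$ then identifies that set with $\prod_v X(k_v)$. This is the only place the large-image hypothesis enters, and it enters only through the generic fibre.

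Your alternative, restricting to a Hilbert subset of large-image specializations and invoking the Hilbert-set form of Harari's 1994 theorem, is also valid. However, it needs an extra Hilbert irreducibility argument about how the mod-$2n$ image specializes. It also relies on a version of the fibration theorem that the paper does not state. It gains nothing here.
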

		
		\begin{proof}
			By Hironaka's resolution of singularities, there exist a smooth projective compactification $X$ of $W$ and a morphism $\Bar{\pi}\colon X\to \bP^1_k$ extending $\pi$. The equality $\Br_{\nr}(W)=\Br_0(W)$ is equivalent to $\Br(X)=\Br_0(X)$. In order to show that $\Br(X)=\Br_0(X)$, recall from \cite[Definition 11.1.11]{colliot2021brauer} the definition of the vertical Brauer group 
            \[\Br_{\mathrm{vert}}(X/\bP^1_k)\coloneqq \{\beta\in \Br(X)\,\mid\, \beta|_{X_{k(t)}}\in\mathrm{Im}(\Br(k(t))\to \Br(X_{k(t)}))\}.\]
            Since $X_{k(t)}$ is isomorphic to a smooth projective compactification of $V_{\sE_{k(t)}}^{A_{k(t)}}$ over $k(t)$, by Proposition~\ref{brauer-vanishing} we have $\Br(X_{k(t)})=\Br_0(X_{k(t)})$, that is, the pullback map $\Br(k(t))\to \Br(X_{k(t)})$ is surjective. It follows that $\Br(X)=\Br_{\mathrm{vert}}(X/\bP^1_k)$.
			
			Let $K\coloneqq k(\SB(A))$. Then $k$ is algebraically closed in $K$ and $A_K$ is split, so that by Proposition~\ref{prop:pgla-stabilizer} we have $V_{\sE_{k(t)}}^{A_{k(t)}}(K(t))\neq\emptyset$, that is, the morphism $\pi_K\colon W_K\to U_K$ has a rational section, and hence that $\Bar{\pi}_K\colon X_K\to \bP^1_K$ has a section. It follows from Lemma~\ref{skorobogatov-criterion} that all closed fibers of $\Bar{\pi}\colon X\to \bP^1_k$ are split. By \cite[Corollary 11.1.6(ii)]{colliot2021brauer}, we deduce that $\Br_{\mathrm{vert}}(X/\bP^1_k)=\Bar{\pi}^*\Br(\bP^1_k)$. On the other hand, we have $\Br(\bP^1_k)=\Br_0(\bP^1_k)$, so that $\Bar{\pi}^*\Br(\bP^1_k)=\Br_0(X)$. All in all, we have
            \[\Br(X)=\Br_{\mathrm{vert}}(X/\bP^1_k)=\Bar{\pi}^*\Br(\bP^1_k)=\Br_0(X),\]
            as desired. It follows that the Brauer--Manin set of $X$ is equal to the product of all the $X(k_v)$. 
			
			It remains to show that $X(k)$ is dense in its Brauer--Manin set. For this, it suffices to check that the assumptions of Theorem~\ref{harari-thm} are satisfied. The verification of these assumptions is given below; it closely parallels the corresponding part of the proof of Theorem~\ref{mainthm}.
            
			\begin{itemize} 
				\item[(i)] We already checked that the fiber of $\Bar{\pi}$ at $P$ is a split $k(P)$-scheme for every closed point $P\in \bA^1_k$. 
				\item[(ii)] By \eqref{bigdiagram-twisted}, the generic fiber of $\pi$ is isomorphic to $V_{\sE_{k(t)}}^{A_{k(t)}}$ over $k(t)$. Proposition~\ref{prop:pgla-stabilizer} now implies that $V_{\sE_{k(t)}}^{A_{k(t)}}$ is smooth over $k(t)$ and, since $A_{\kbar}$ is split, that $V_{\sE_{k(t)}}^{A_{k(t)}}(\kbar(t))\neq \emptyset$.
				\item[(iii)] The geometric generic fiber of $\Bar{\pi}$ is unirational, and hence its Picard group is torsion-free and its Brauer group is finite by \cite[Theorem 5.5.2 and (6) p. 347]{colliot2021brauer}.
				\item[(iv)] By Corollary~\ref{solvable-stabilizers}, for every $u\in U(k)$ the fiber $\pi^{-1}(u)=V_{\sE_u}^A$ is a homogeneous space under $\SL(A)$ with finite solvable stabilizer. By Theorem~\ref{demeio-thm}, if $(V_u^A)^c$ is a smooth projective compactification of $V_u^A$, then $(V_u^A)^c(k)$ is dense in the Brauer--Manin set of $(V_u^A)^c$.
                \end{itemize}
            By Theorem~\ref{harari-thm}, we conclude that $X(k)$ is dense in the product of the $X(k_v)$, that is, $X$ satisfies weak approximation. By a theorem of Kneser \cite[Proposition 13.2.3]{colliot2021brauer}, weak approximation is a birational invariant, so we deduce that $W$ satisfies weak approximation.
		\end{proof}

Finally, we arrive at the proof of Theorem~\ref{weak-approximation-thm}.
		
        \begin{proof}[Proof of Theorem~\ref{weak-approximation-thm}]
Consider the morphism $f^A\colon V(n)^A\to \bA^1_k$ of \eqref{eq:morphism-f} and the smooth open subscheme $V\coloneqq V(n)^A\smallsetminus(f^A)^{-1}(\{0,1728\})$ of $V(n)^A$. 
Since weak approximation is a birational invariant by a theorem of Kneser
\cite[Proposition~13.2.3]{colliot2021brauer}, in order to prove that
$V(n)^A$ satisfies weak approximation, it suffices to show that $V$
satisfies weak approximation.

Let $\Sigma$ be a finite set of places of $k$ and, for every
$v\in\Sigma$, let $C_v\subset \SB(A_{k_v})$ be a twisted elliptic normal curve corresponding to a point $[C_v]\in V(k_v)$. 
After enlarging $\Sigma$, we may assume that $A_{k_v}$ is split for
every place $v\notin\Sigma$. For every place added to $\Sigma$, choose
a point $[C_v]\in V(k_v)$. Such a point exists by
Proposition~\ref{nodal} and Remark~\ref{rmk:large}, since every central
simple algebra over a local field is cyclic and every local field is
large. For every $v\in\Sigma$, define $E_v\coloneqq \Pic^0_{C_v/k_v}$ and $j_v\coloneqq \jj(E_v)$. Since $[C_v]\in V(k_v)$, we have $j_v\notin\{0,1728\}$. Let $t$ be a coordinate on $\bA^1_k$, let $U\coloneqq \bA^1_k\smallsetminus\{0,1728\}$, let $x,y,z$ be homogeneous coordinates on $\bP^2_k$, and consider the
elliptic curve $\sE\subset \bP^2_k\times_k U$ over $U$ given by the homogeneous equation
\[y^2z+xyz=x^3-\frac{36}{t-1728}xz^2-\frac{1}{t-1728}z^3.\]
By \cite[Section~5.1, p.~47]{serre2008topics}:
\begin{itemize}
\item[(i)] the $j$-invariant morphism $U\to\bA^1_k$ is the
obvious inclusion, so that $\jj(\sE_{k(t)})=t$ and $\jj(\sE_\lambda)=\lambda$ for every $\lambda\in k\smallsetminus\{0,1728\}$;
\item[(ii)] the image of $\Gamma_{k(t)}\to\GL(\sE_{k(t)}[2n](\overline{k(t)}))$ contains $\SL(\sE_{k(t)}[2n](\overline{k(t)}))$. 
\end{itemize}

By (i), the elliptic curves $(\sE_{j_v})_{k_v}$ and $E_v$ are
quadratic twists of each other, for every $v\in\Sigma$. As in the proof
of Theorem~\ref{local-global-thm}(2), let $\chi_v\in H^1(k_v,\mu_2)$ be a character such that the twist of $(\sE_{j_v})_{k_v}$ by $\chi_v$ is isomorphic to $E_v$, for every $v\in\Sigma$. By weak approximation for $\bA^1_k$, the natural map $k^\times/k^{\times 2}
\to \prod_{v\in\Sigma}k_v^\times/k_v^{\times 2}$ is surjective. Hence, by Kummer theory, there exists $\chi\in H^1(k,\mu_2)$ whose image in $H^1(k_v,\mu_2)$ is equal to $\chi_v$, for every $v\in\Sigma$. Replacing $\sE$ by its twist by $\chi$, the new elliptic curve
$\sE\to U$ satisfies (i), because twisting does not change the $j$-invariant, satisfies (ii) by Lemma~\ref{image-of-twist}, and satisfies
\begin{itemize}
\item[(iii)] for every $v\in\Sigma$, we have an isomorphism $(\sE_{j_v})_{k_v}\cong E_v$ over $k_v$.
\end{itemize}

Consider the Cartesian square of Lemma~\ref{lemma-w} determined by the
family of elliptic curves $\sE\to U$:
\[
\begin{tikzcd}
W \arrow[r,"\varphi"] \arrow[d,"\pi"] &
V \arrow[d,"(J^A)|_V"]\\
U \arrow[r,"s"] &
\sM_{1,1}.
\end{tikzcd}
\]
By (iii), for every $v\in\Sigma$, we have $W_{j_v}\cong V_{E_v}^{A_{k_v}}$. For every $v\in\Sigma$, let $P_v\in W_{j_v}(k_v)$ be the point corresponding to the twisted elliptic normal curve $C_v\subset \SB(A_{k_v})$; then $\varphi(P_v)=[C_v]$.

We claim that $W(k_v)\neq\emptyset$ for every place $v$ of $k$. If
$v\in\Sigma$, this follows from the existence of $P_v$. Suppose that
$v\notin\Sigma$. Then $A_{k_v}$ is split. Choose any
$u_v\in U(k_v)$, so that $W_{u_v}\cong V_{\sE_{u_v}}^{A_{k_v}}$.
Since $\SB(A_{k_v})\cong\bP^{n-1}_{k_v}$, Proposition~\ref{prop:pgla-stabilizer} implies that $W_{u_v}(k_v)\neq\emptyset$, and hence $W(k_v)\neq\emptyset$.

It follows from Lemma~\ref{lemma-w} that $W$ satisfies weak
approximation. Therefore, there exists a sequence of points
$x_i\in W(k)$ whose images in $\prod_{v\in\Sigma}W(k_v)$ converge to $(P_v)_{v\in\Sigma}$. Since, for every $v\in\Sigma$, the
map $\varphi(k_v)\colon W(k_v)\to V(k_v)$ is continuous with respect to the $v$-adic topology, the sequence of points $\varphi(x_i)\in V(k)$ converges to $[C_v]$ in $V(k_v)$, for every $v\in\Sigma$. Thus $V$ satisfies weak approximation. Since $V$ is birational to $V(n)^A$, it follows that $V(n)^A$ satisfies weak approximation.
\end{proof}

\small
\providecommand{\bysame}{\leavevmode\hbox to3em{\hrulefill}\thinspace}
\providecommand{\MR}{\relax\ifhmode\unskip\space\fi MR }


\vspace{20pt}
\noindent
Benjamin Antieau\\
Northwestern University\\
\texttt{antieau@northwestern.edu}

\vspace{10pt}
\noindent
Asher Auel\\
Dartmouth College\\
\texttt{asher.auel@dartmouth.edu}

\vspace{10pt}
\noindent
Federico Scavia\\
CNRS and Université Sorbonne Paris Nord\\
\texttt{scavia@math.univ-paris13.fr}
        
\end{document}